\documentclass[11pt, reqno]{amsart}

\usepackage[noadjust]{cite}
\usepackage[letterpaper, hmarginratio=1:1]{geometry}
\usepackage{amsmath, amsthm, amssymb, amsfonts, enumerate, bbm, comment}
\usepackage{hyperref}

\numberwithin{equation}{section}
\theoremstyle{plain}
 \newtheorem{theorem}{Theorem}[section]

 \newtheorem{proposition}[theorem]{Proposition}
 
 \newtheorem{lemma}[theorem]{Lemma}
 \newtheorem{corollary}[theorem]{Corollary}

\newtheorem{remark}[theorem]{Remark}

\theoremstyle{definition}
\newtheorem{definition}[theorem]{Definition}

%%%%

\def \N {\mathbb{N}}
\def \R {\mathbb{R}}

\def \Z {\mathbb{Z}}
\def \E {\mathbb{E}}
\def \P {\mathbb{P}}
\def \S {\mathbb{S}}
\def \one {{\bf 1}}

\def \LL {\mathcal{L}}

\def \< {\langle}
\def \> {\rangle}

\renewcommand\epsilon{\varepsilon}

\renewcommand\P{{\mathbb{P}}}

\newcommand\eps{{\varepsilon}}

\DeclareMathOperator{\supp}{supp}
\DeclareMathOperator{\Comp}{Comp}
\DeclareMathOperator{\Incomp}{Incomp}

\DeclareMathOperator{\Dom}{Dom}

\newcommand{\IncompTwo}{ \Incomp(m , 2^{j-1} \rho, 2^j \rho)}
\newcommand{\Incompalphan}{ \Incomp(\alpha n , 2^{j-1} \rho, 2^j \rho)}

\def \Dhat {\widehat{D}}
%\def \H {{\bf (H)}\xspace}

%%%%%%%%%%%%%%%%%%%%%%%%%%%

\begin{document}

\title{Tail bounds for gaps between eigenvalues of sparse random matrices}
%\runtitle{Tail bounds for gaps between eigenvalues}

\author{Patrick Lopatto}
\thanks{P.L. is partially supported by the NSF Graduate Research Fellowship Program under grant DGE-1144152.}
\author{Kyle Luh}
\thanks{K. Luh was partially supported by NSF postdoctoral fellowship DMS-1702533.}

\maketitle

\begin{abstract}
We prove the first eigenvalue repulsion bound for sparse random matrices.  As a consequence, we show that these matrices have simple spectrum, improving the range of sparsity and error probability from work of the second author and Vu.  We also show that for sparse Erd\H{o}s--R\'enyi graphs, weak and strong nodal domains are the same, answering a question of Dekel, Lee, and Linial.  
\end{abstract}

{\setcounter{tocdepth}{1}
	\tableofcontents
}

\section{Introduction}
The gaps between eigenvalues of symmetric random matrices have been extensively studied by mathematicians and physicists. For the classical integrable ensembles, the Gaussian Orthogonal Ensemble and Gaussian Unitary Ensemble, the limiting spectral distribution follows the semicircle law.  For an individual eigenvalue gap, however, the limiting distribution was only recently obtained \cite{tao2013gap}.  Rapid progress in random matrix theory has permitted the extension of this result to a large class of random matrix models \cite{tao2012universality, erdos2015gapuniversality, shcherbina2014universality, bekerman2015universality, EKYY12,EKYY13,EPR10,ESY11,rigidity, TV10,TV11,aggarwal2018goe, aggarwal2019bulk,huang2015bulk, che2019local,che2019universality,che2019universality2,pillai2014universality,che2017universality}. 

Much effort has been expended on understanding the extremal eigenvalue gaps, in particular the largest eigenvalue gap in the bulk of the spectrum, $\delta_{\mathrm{max}}$.  Ben Arous and Bourgade \cite{benarous2013extremegaps} demonstrated that for the $n \times n$ GUE normalized so that its spectrum is supported on $[-2, 2]$, so that the typical inter-particle distance in the bulk is about $n^{-1}$, the largest bulk gap is of order $n^{-1} \sqrt{\log n} $.  Figalli and Guionnet extended this result to $\beta$-ensembles with $\beta = 2$ \cite{figalli2016transport}.  In \cite{feng2018large}, Feng and Wei showed that the fluctuations of the largest gap are of order  $n^{-1} \sqrt{\log n} $ and computed the limiting distribution.  
%Later, the gap estimates were proven with sufficiently high probability to establish universality via a union bound to all gaussian divisible ensembles \cite{landon2016fixed}.
In work of the first author with Landon and Marcinek, the largest gap results of \cite{benarous2013extremegaps,feng2018large} were extended  to generalized Wigner matrices \cite{landon2018comparison}, including those with discrete entry distributions. We note that recent work of Bourgade \cite{bourgadeprep}, which presents a concise analysis of the convergence to equilibrium of Dyson Brownian motion, is able to recover the same result at the cost of imposing a weak smoothness assumption on the matrix entries.

While we now have a substantial understanding of the largest eigenvalue gap, the smallest gap, $\delta_{\mathrm{min}}$, is more difficult to investigate because it lies well below the typical inter-particle distance.  Bourgade and Ben Arous \cite{benarous2013extremegaps} showed using the determinantal structure of the GUE that its smallest gap is of order $n^{-4/3}$.  In \cite{Feng2018GOE}, Feng, Tian, and Wei identified the normalized limit of the smallest eigenvalue gap of the GOE and found that the gap is of order $n^{-3/2}$; their argument builds on techniques previously developed by Feng and Wei to study circular $\beta$-ensembles \cite{Feng2018small}. Currently, the smallest gap lies outside of the purview of traditional universality results such as the Four Moment Theorem \cite{tao2014fourmoment}, and the techniques in the recent work \cite{landon2018comparison} are not applicable. The strongest available result is in the recent work of Bourgade \cite{bourgadeprep}, which shows universality of the smallest gap, but requires that the matrix entries possess a weak form of smoothness. At present, no universality results exist for the smallest gap for matrices that are sparse or have discrete entry distributions, such as a matrix of Bernoulli random variables.

While tail bounds are known for the individual gaps when the matrix entries are more general random variables \cite{tao2012universality, TV10}, the error rates are not strong enough to take a union bound to conclude anything about the minimum gap.  We now scale the matrices so that their spectrum lies on $[-2\sqrt{n}, 2 \sqrt{n}]$, which makes the average inter-particle distance $n^{-1/2}$; we take this convention to match the existing tail bound literature, and it remains in force throughout the rest of the paper. For Hermitian matrices, under stringent smoothness and decay assumptions on the random variables, a result of Erd\H{o}s, Schlein, and Yau \cite{erdos2010wegnerestimate} implies that there exists a small constant $c >0$ such that
$$
\P\left(\delta_{\mathrm{min}} \leq \frac{\delta}{n^{1/2}} \right) = o(n \delta^3) + \exp(-c n)
$$
for any $\delta > 0$.  For discrete random variables, it was a milestone just to show that $\delta_{\mathrm{min}} > 0$ \cite{tao2017simplespectrum}.  In particular, Tao and Vu showed that for any $A > 0$, with probability at least $1 - n^{-A}$ a random symmetric matrix has simple spectrum, meaning every eigenvalue appears with multiplicity one.  In follow-up work with Nguyen \cite{nguyen2017gaps}, they showed the following tail bound for the eigenvalue gaps. Given eigenvalues $\lambda_i$ labeled in ascending order, we denote the gaps by $\delta_i= \lambda_{i+1} - \lambda_i$.
\begin{theorem}[{\cite[Theorem 2.1]{nguyen2017gaps}}]
There exists a constant $c>0$ such that the following holds for the eigenvalue gaps, $\delta_i$, of a real symmetric Wigner matrix.  For any $n^{-c} \leq \alpha \leq c$ and $\delta \geq n^{-c/\alpha}$, 
$$
\sup_{1 \leq i \leq n-1} \P\left(\delta_i \leq \frac{\delta}{n^{1/2}} \right) = O\left( \frac{\delta}{\alpha^{1/2}} \right).
$$
\end{theorem}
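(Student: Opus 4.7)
The plan is to combine the Cauchy interlacing / secular equation framework with an inverse Littlewood--Offord anti-concentration estimate, following the general strategy that Tao--Vu developed for discrete symmetric matrices. The small gap event will be recast as a small-ball event for a quadratic form in the independent entries of a single row of $M$, conditioned on the minor.

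First I would reduce to a statement about random projections of eigenvectors of a minor. Let $M'$ be the principal minor of the real symmetric Wigner matrix $M$ obtained by deleting the last row and column, with eigenpairs $(\mu_k, u_k)_{k=1}^{n-1}$, and let $X = (M_{jn})_{j=1}^{n-1}$ be the off-diagonal part of the last column. Then the eigenvalues of $M$ are exactly the roots of the secular equation
$$M_{nn} - \lambda = \sum_{k=1}^{n-1} \frac{\zeta_k^{2}}{\mu_k - \lambda}, \qquad \zeta_k := \langle u_k, X \rangle.$$
By Cauchy interlacing, a gap $\delta_i \leq \delta/\sqrt{n}$ forces two consecutive roots of this rational function to lie within $\delta/\sqrt{n}$, which, by analyzing the monotone pieces of the right-hand side on $(\mu_{k-1}, \mu_k)$ and $(\mu_k, \mu_{k+1})$, translates into the statement that a certain weighted combination of the $\zeta_k^{2}$ must lie in an interval of length $O(\delta)$ around a deterministic target depending on $M'$ and $M_{nn}$.

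Next I would condition on $M'$ so that $(\mu_k, u_k)$ are frozen and the only remaining randomness is in $X$ and $M_{nn}$. The small gap event now reads as a linear-plus-quadratic small-ball event for the independent coordinates of $X$. On the favorable event that the $u_k$ are sufficiently delocalized and arithmetically unstructured, one obtains via the inverse Littlewood--Offord theorem an anti-concentration bound $\sup_r \mathbb{P}(|\zeta_k - r| \leq s \mid M') \lesssim s$ uniformly over a large block of indices $k$. The parameter $\alpha$ enters by selecting an $\alpha n$-dimensional block of ``especially good'' eigenvectors and averaging the anti-concentration estimate over that block: the independence of $\alpha n$ such bounds yields the $\alpha^{-1/2}$ improvement in the final estimate, at the cost of requiring the good block to have no exceptional structure, which holds up to probability $n^{-c/\alpha}$ --- this dictates the threshold $\delta \geq n^{-c/\alpha}$.

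The main obstacle, as usual in this line of work, is the structural input in the previous paragraph: proving that with probability at least $1 - n^{-c/\alpha}$, the random minor $M'$ has an $\alpha n$-dimensional family of eigenvectors each of which is simultaneously delocalized and free of additive structure in the Littlewood--Offord sense. For continuous entries this is essentially automatic, but for a general (possibly discrete) Wigner matrix one must invoke a structure theorem in the spirit of Nguyen--Vu, which in turn rests on a decomposition of the unit sphere into compressible and incompressible vectors together with a union bound over nets of potentially structured directions. The calibration of the net sizes against the target exponent $c/\alpha$ is the step where the quantitative interplay between $\alpha$ and $\delta$ is forced, and it is the delicate technical heart of the argument.
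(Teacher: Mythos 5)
Your proposal misidentifies both the key reduction and the mechanism by which the parameter $\alpha$ enters, so it would not survive contact with the quantitative bookkeeping even though the high-level ingredients (condition on the minor, anti-concentration via inverse Littlewood--Offord) are correct.

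On the reduction: you route through the secular equation $M_{nn} - \lambda = \sum_k \zeta_k^2 / (\mu_k - \lambda)$ and try to convert a small gap into a small-ball event for a ``weighted combination of the $\zeta_k^2$.'' That is the Erd\H{o}s--Schlein--Yau framing, which is naturally matched to entries with density, not to discrete Wigner matrices. The Nguyen--Tao--Vu proof (and the proof of Theorem~\ref{thm:main} in this paper) uses a much more direct identity. Write the eigenvector of $M_n$ as $v = (x,a)$, multiply the top $(n-1)$ rows of $(M_n - \lambda_i)v = 0$ by the transpose of the eigenvector $w$ of the minor $M_{n-1}$ with eigenvalue $\lambda_i(M_{n-1})$, and obtain
\begin{equation*}
|a\, w^T X| = |\lambda_i(M_{n-1}) - \lambda_i(M_n)|\,|w^T x| \le \delta_i
\end{equation*}
by Cauchy interlacing and $|w^T x| \le 1$. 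This isolates a \emph{single} linear form $w^T X$, precisely the object for which inverse Littlewood--Offord applies cleanly. Your ``weighted combination of $\zeta_k^2$'' is a far more awkward random variable, and the step where you pass from ``two roots close'' to ``a weighted combination is small'' is not substantiated and, even if salvaged, would in the end localize to a single residue $\zeta_k^2$ being small anyway --- at which point you would want the direct identity.

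On $\alpha$: you claim that $\alpha$ enters by ``selecting an $\alpha n$-dimensional block of especially good eigenvectors'' and that ``independence of $\alpha n$ such bounds'' yields the $\alpha^{-1/2}$ gain. This is incorrect on two counts. First, conditionally on $M'$, the $\zeta_k = \langle u_k, X\rangle$ are all linear functionals of the same vector $X$; orthonormality of $(u_k)$ makes them uncorrelated, not independent, so the product rule you implicitly invoke does not apply. Second, the actual role of $\alpha$ in both Nguyen--Tao--Vu and this paper is to calibrate a partition of the \emph{coordinate} index set $[n-1]$ of $w$ --- one chooses index blocks of size roughly $\alpha n$, defines a regularized LCD as the maximum LCD over restrictions of $w$ to such blocks, and trades off net cardinality (which grows like $(L/\sqrt{\alpha n})^{n}$) against the small-ball rate (which improves like $1/L$). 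The threshold $\delta \ge n^{-c/\alpha}$ falls out of this calibration, not out of a union bound over exceptional eigenvector blocks.

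Finally, you have not addressed a step that the paper's argument needs: a lower bound on the distinguished coordinate $|a|$. The reduction gives $|w^T X| \le \delta_i / |a|$, which is vacuous if $a$ is tiny. The paper averages the choice of deleted coordinate over $[n]$ and uses incompressibility of the eigenvector $v$ (Lemma~\ref{l:largecoordinates}) to guarantee that a $\rho^2$-fraction of coordinates satisfy $|v_j| \gtrsim \rho/\sqrt{n}$; this is what makes the $n/N$ averaging in \eqref{e:66} nonvacuous. Your proposal replaces this with a vague appeal to ``delocalization'' of the $u_k$'s of the minor, which is a different set of vectors and does not directly control $a = v_n$.
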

%Setting $\alpha= c$, we find the smallest gap is roughly order $n^{-3/2}$, matching the Feng--Tian--Wei and Bourgade results for real symmetric matrices noted above. At the other extreme, 
Setting $\alpha = n^{-c}$, one can deduce that a real symmetric random matrix has simple spectrum with probability at least $1 - O(\exp(-n^c))$. 
A related problem, posed by Babai, is whether the adjacency matrix of an Erd\H{o}s--R\'enyi random graph has simple spectrum.  This was resolved affirmatively for all dense random graphs in \cite{tao2017simplespectrum, nguyen2017gaps}.   A consequence in complexity theory is that for such random graphs the graph isomorphism problem is in complexity class $\mathcal{P}$ \cite{babai1982isomorphism}.  

In this work we study the eigenvalue gaps of sparse random matrices. The theory of sparse random matrices is of interest in its own right, but it also has innumerable applications in computer science and statistics.  In contexts where sparse random matrices have similar spectral guarantees as their dense counterparts, they offer significant advantages as they require less space to store, allow quicker multiplication, and need fewer random bits to generate \cite{ballard2013communication, bah2013construction, auyeung2017sparse, dasgupta2010JL, nelson2013OSNAP, clarkson2017lowrank}.  A popular model for such matrices is to consider the Hadamard (entrywise) product of a dense random matrix and a sparse matrix of independent (up to symmetry) indicator variables with expectation $p = p(n)$.  Much work has been done to transfer the results known for dense random matrices to the sparse setting \cite{basak2017sparse, basak2018sharp, bourgade2017sparse, lee2018tracywidom, huang2015bulk, litvak2012smallest, wood2012circular, rudelson2018sparse, basak2018sharp}.  Although the results resemble their dense analogues, the sparsity brings about a variety of complications in the proofs.  
Only recently, the second author and Vu showed that for a large class of random variables and for $p \geq n^{-1 + \eps}$ with $\eps >0$, a sparse random matrix has simple spectrum with probability at least $1 - O_{\eps}(\exp(-(np)^{1/128}))$ \cite{luh2018sparse}, where this notation indicates that the implied constant depends on $\eps$.  This implies that the graph isomorphism problem restricted to this class of sparse random graphs is in complexity class $\mathcal{P}$.

Our main contribution is to go beyond verifying such matrices have simple spectrum and prove a tail bound for the minimal eigenvalue gap of sparse random matrices with $p \ge C \log^{7 + \eps}(n)/n$. In comparison with \cite{luh2018sparse}, our results represent an improvement in both error probability and the range of sparsity considered. As an application of our tail bound, we show that for sparse Erd\H{o}s--R\'enyi graphs, weak and strong nodal domains are the same, answering a question of Dekel, Lee, and Linial \cite{dekel2011nodaldomains}. Our results also expand the range of sparse graphs for which the graph isomorphism problem is known to be in $\mathcal{P}$. Related to this last application is the graph matching problem, for which various algorithms contingent on simple spectrum are known \cite{umeyama1988eigendecomposition,lyzinski2016graph,aflalo2014graph}; our results similarly extend their range of applicability.\\

\noindent {\bf Acknowledgments.} The authors thank the anonymous referees for their detailed comments, which substantially improved the paper.

\section{Main Results}
We begin with a formal definition of our random matrix model.
\begin{definition} \label{def:matrixmodel}
We let $M_n$ denote a symmetric random matrix with entries 
$$
m_{ij} = \xi_{ij} \chi_{ij},
$$
where the $\xi_{ij}$ are independent (for $i\ge j$), mean zero, variance one, and subgaussian with subgaussian moment $B$, and the $\chi_{ij}$ are independent  (for $i\ge j$) Bernoulli random variables with $\E \chi_{ij} = p$.
\end{definition}

\begin{theorem}\label{thm:main}
Let $M_n$ be as in Definition~\ref{def:matrixmodel}, and fix $\nu >0$.  There exist constants $C_{\ref{thm:main}}, c_{\ref{thm:main}}, c'_{\ref{thm:main}} > 0$, depending only on the subgaussian moment $B$, such that for
$$\frac{C_{\ref{thm:main}}\log^{7 + \nu} n}{n} \le p \le 1 $$
 and 
$$(np)^{-1/(7 + \nu)}  \leq \alpha \leq \frac{c'_{\ref{thm:main}}}{\log n},$$
the following holds for the gaps between the eigenvalues, $\delta_i= \lambda_{i+1} - \lambda_i$.  For any $\delta \geq \exp(-\alpha^{-1})$, 
$$
\sup_{1 \leq i \leq n-1} \P\left(\delta_i \leq \delta \exp\left(-c_{\ref{thm:main}} \frac{\log(1/p)}{\log np}\right)    \sqrt{\frac{p}{n}} \right) \leq     C_{\ref{thm:main}} \frac{\delta}{\alpha}.
$$

\end{theorem}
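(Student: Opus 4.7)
I propose to follow the Nguyen--Tao--Vu strategy of \cite{nguyen2017gaps}, reducing the small-gap event to a small-ball problem via Cauchy interlacing on a principal minor, and then handling that small-ball problem using two ingredients: no-gaps eigenvector delocalization for sparse matrices, and a Littlewood--Offord inequality adapted to sums of the form $\sum_k \xi_k \chi_k v_k$. The new content beyond \cite{nguyen2017gaps} is extending both ingredients to the sparse setting $p \ge C\log^{7+\nu} n/n$, where the usual delocalization proofs and anti-concentration bounds degrade.

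For the reduction, fix $i$, delete the last row and column of $M_n$ to obtain the $(n-1)\times(n-1)$ minor $M_{n-1}$ with eigenvalues $\mu_1 \le \cdots \le \mu_{n-1}$ and orthonormal eigenvectors $w_1,\dots,w_{n-1}$, and let $X\in\R^{n-1}$ collect the deleted off-diagonal entries. Condition on $M_{n-1}$. Cauchy interlacing gives $\lambda_i(M_n)\le\mu_i\le\lambda_{i+1}(M_n)$, and by the Schur complement the eigenvalues of $M_n$ distinct from the $\mu_j$ are the roots of the secular equation
$$m_{nn}-\lambda=\sum_{j=1}^{n-1}\frac{|\langle X,w_j\rangle|^2}{\mu_j-\lambda}.$$
Setting $\delta':=\delta\exp(-c\log(1/p)/\log np)\sqrt{p/n}$, a direct analysis of this secular equation shows that the gap $\delta_i$ is controlled from below in terms of $|\langle X,w_j\rangle|^2$ for $j$ in a spectral window around $\mu_i$; a small $\delta_i\le\delta'$ forces several of these inner products to be atypically small simultaneously. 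Matters then reduce to showing that, for a typical realization of $M_{n-1}$, the random scalars $\langle X,w_j\rangle$ with $j$ in a window of $\sim\alpha n$ indices around $i$ are strongly anti-concentrated.

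The first ingredient needed is no-gaps delocalization: with high probability every relevant eigenvector $w_j$ of $M_{n-1}$ places at least an $\Omega(\alpha)$ fraction of its $\ell^2$ mass on every coordinate subset of size $\alpha n$. This follows from resolvent and comparison techniques available for sparse Wigner matrices in the regime $np\ge\log^{C}n$, with the constant $C$ tracked carefully to match the exponent $7+\nu$ in the hypothesis. The second ingredient is a sparse Littlewood--Offord estimate for $\sum_k \xi_k\chi_k w_{j,k}$: after conditioning on the Bernoulli pattern $(\chi_k)$, whose support has size $\asymp np$ with high probability, the restricted coefficient vector inherits enough spread from no-gaps delocalization to feed into an LCD-based Hal\'asz-type bound. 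This yields small-ball probability of order $\delta/\alpha$ at the required scale, absorbing the loss factor $\exp(-c\log(1/p)/\log np)$ into $\delta'$. A net argument over the location of the interval containing $\lambda_i,\lambda_{i+1}$ then completes the proof.

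The principal obstacle lies in this last step. Since $X$ is supported on a set of size only $\sim np$, the naive Erd\H{o}s--Littlewood--Offord bound $O((np)^{-1/2})$ is far too weak; sharpening it requires exploiting the arithmetic (LCD) structure of the coefficient vector restricted to the random Bernoulli support, while simultaneously retaining a quantitative no-gaps delocalization strong enough to survive this restriction. The restriction $\alpha\ge(np)^{-1/(7+\nu)}$ is dictated by the point at which this sparse LCD analysis balances against the net argument and the delocalization error, and the threshold $np\ge\log^{7+\nu}n$ arises for the analogous reason in the delocalization estimate. Calibrating these thresholds against one another, and propagating the small-ball bound through the secular-equation reduction without losing the $\delta/\alpha$ dependence, will constitute the bulk of the technical work.
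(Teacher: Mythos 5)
Your reduction (Cauchy interlacing on a principal minor, then small-ball anti-concentration of $\langle X,w\rangle$) is the right skeleton and matches the paper in spirit, although the paper does not need the full secular equation: it simply multiplies the top block of the eigenequation by a single eigenvector $w$ of $M_{n-1}$ to get $|w^T X|\le \delta_i/|a|$, and then averages over which coordinate is deleted, using incompressibility of the eigenvector of $M_n$ (Lemma~\ref{l:largecoordinates} and Proposition~\ref{prop:eigvecnotcomp}) to guarantee that a constant fraction of coordinates $a$ are $\gtrsim\rho/\sqrt{n}$. That part is fine.

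The genuine gap is in how you propose to get anti-concentration. You claim that ``no-gaps delocalization'' of $w$, after restricting to the Bernoulli support of $X$, ``inherits enough spread \dots to feed into an LCD-based Hal\'asz-type bound.'' This conflates two different properties. No-gaps delocalization is an $\ell^2$-mass statement: every $\alpha n$-subset carries $\Omega(\alpha)$ of the mass. The LCD is an arithmetic statement: $\theta w$ stays far from $\Z^n$ for all $\theta$ up to some threshold. A vector can be perfectly delocalized and still have tiny LCD (e.g.\ coordinates in $\{\pm n^{-1/2}\}$ have $D\asymp\sqrt{n}$, which is useless here --- one needs $D(w)\gtrsim p^{-1/2}\exp(\alpha^{-1})$). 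Delocalization alone therefore does not feed into Proposition~\ref{prop:smallballprobability}; you need an independent lower bound on the (regularized) LCD of the eigenvectors of $M_{n-1}$. Establishing this is the heart of the paper: the entire machinery of Section~\ref{sec:incompressible} --- the decomposition into $\Incomp(m,2^{j-1}\rho,2^j\rho)$, the index-set partition of \eqref{e:partition}, the level sets $S_L$, and the net argument of Proposition~\ref{prop:smallLCD} tensorized via Proposition~\ref{prop:smallballprob} and unioned into Proposition~\ref{p:eigvectors} --- exists precisely to prove that no eigenvector has small regularized LCD. Your proposal skips this entirely. Moreover, your choice to condition on the Bernoulli pattern $(\chi_k)$ is counterproductive: after conditioning, the effective dimension drops to $\asymp np$, weakening the Littlewood--Offord bound, whereas the paper's sparse LCD (Definition~\ref{def:LCD}) bakes $p$ into the definition so that Proposition~\ref{prop:smallballprobability} can be used without throwing away the zero entries. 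As a consequence, the thresholds $np\gtrsim\log^{7+\nu}n$ and $\alpha\ge(np)^{-1/(7+\nu)}$ do not, in fact, come from a delocalization error as you guess; they come from the requirement $\eps_0\to 0$ in the net bound of Proposition~\ref{prop:smallLCD}. In short: the Cauchy-interlacing reduction is correct, but the central anti-concentration input (a regularized-LCD lower bound for eigenvectors of the sparse symmetric matrix) has no proof plan in your proposal, and the substitute you propose does not supply it.
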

Observe that there is a trade-off in the strength of the error bound and the size of the eigenvalue gap, determined by the value of $\alpha$.  For example, 
if we choose $\alpha = c_{\ref{thm:main}}/ \log n$, we obtain the following result.
\begin{corollary} \label{cor:largegap}
Let $M_n$ be as in Definition~\ref{def:matrixmodel}, and fix $\nu >0$.  There exist $C_{\ref{cor:largegap}}, C_{\ref{cor:largegap}}' > 1$, such that for
$p \geq \frac{C_{\ref{thm:main}}\log^{7 + \nu} n}{n},$
$$
\sup_{1 \leq i \leq n-1} \P\left(\delta_{i} \leq    \delta \exp\left(-c_{\ref{thm:main}} \frac{\log(1/p)}{\log np}\right)   \sqrt{\frac{p}{n}} \right) \le C_{\ref{cor:largegap}} \delta \log n.
$$
for $\delta \geq n^{-C_{\ref{cor:largegap}}'}$.  By a union bound, 
$$
 \P\left(\delta_{\mathrm{min}} \leq    \frac{\sqrt{p}}{n^{3/2 + o(1)}} \right) = o(1).
$$ 
\end{corollary}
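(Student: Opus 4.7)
The plan is to derive both displays as a direct specialization of Theorem~\ref{thm:main}, first by fixing $\alpha$ at the top of its admissible range and then by taking a union bound over the $n-1$ gaps. I set $\alpha := c'_{\ref{thm:main}}/\log n$. The upper constraint $\alpha \le c'_{\ref{thm:main}}/\log n$ is then met with equality, and the lower constraint $\alpha \ge (np)^{-1/(7+\nu)}$ rearranges to $np \ge (\log n / c'_{\ref{thm:main}})^{7+\nu}$, which is precisely what the hypothesis $p \ge C_{\ref{thm:main}}\log^{7+\nu}(n)/n$ is designed to guarantee (enlarging $C_{\ref{thm:main}}$ if necessary to absorb the constant $(c'_{\ref{thm:main}})^{-(7+\nu)}$). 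The admissibility constraint $\delta \ge \exp(-\alpha^{-1}) = n^{-1/c'_{\ref{thm:main}}}$ is then satisfied whenever $\delta \ge n^{-C'_{\ref{cor:largegap}}}$ with $C'_{\ref{cor:largegap}} := 1/c'_{\ref{thm:main}}$. Substituting these choices into Theorem~\ref{thm:main} produces the first display of the corollary with $C_{\ref{cor:largegap}} := C_{\ref{thm:main}}/c'_{\ref{thm:main}}$.

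For the second display I apply the first to each index $1 \le i \le n-1$ and use subadditivity to obtain
$$
\P\!\left(\delta_{\min} \le \delta \exp\!\left(-c_{\ref{thm:main}} \frac{\log(1/p)}{\log np}\right) \sqrt{\frac{p}{n}}\right) \le C_{\ref{cor:largegap}}\,\delta\, n \log n.
$$
Choosing $\delta = n^{-1-\eps}$ for an arbitrarily small fixed $\eps > 0$ renders the right-hand side $o(1)$. To convert the explicit threshold on the left into the tidier $\sqrt{p}/n^{3/2+o(1)}$, I absorb the slowly-varying factors into $n^{o(1)}$: the sparsity hypothesis forces $\log(np) \ge c \log \log n$, while trivially $\log(1/p) \le \log n$, so the exponential prefactor $\exp(-c_{\ref{thm:main}}\log(1/p)/\log np)$ is bounded below by $n^{-o(1)}$. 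Combining this with $\delta = n^{-1-o(1)}$ and $\sqrt{p/n} = \sqrt{p}\,n^{-1/2}$ yields the claimed threshold.

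There is no substantive obstacle here — the corollary is a formal specialization of Theorem~\ref{thm:main}. The only care required is constant bookkeeping: verifying that the choice $\alpha = c'_{\ref{thm:main}}/\log n$ is actually admissible pinpoints $p \gtrsim \log^{7+\nu}(n)/n$ as the minimum sparsity compatible with the present argument (it is exactly the regime in which $(np)^{-1/(7+\nu)}$ can be pushed below $1/\log n$), and tracking the $n^{o(1)}$ factor when passing from the explicit exponential prefactor to the final $n^{3/2+o(1)}$ statement requires only the crude logarithmic bounds just noted.
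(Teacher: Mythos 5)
Your proposal is correct and follows exactly the paper's intended argument: the paper itself obtains this corollary simply by remarking that one should set $\alpha$ at the top of its allowed range ($\alpha = c'_{\ref{thm:main}}/\log n$; the paper writes $c_{\ref{thm:main}}$ in the lead-in sentence, which appears to be a typo) and then take a union bound over the $n-1$ gaps. The only minor imprecision in your write-up is the transition from ``fixed $\eps > 0$'' to the $n^{3/2 + o(1)}$ formulation --- as stated, $\delta = n^{-1-\eps}$ with $\eps$ fixed gives threshold $\sqrt{p}/n^{3/2+\eps+o(1)}$, so to literally match the displayed $o(1)$ exponent one should take $\delta$ of the form $n^{-1-o(1)}$ with the $o(1)$ decaying slowly enough that $\delta\, n \log n \to 0$ and $\delta \ge n^{-C'_{\ref{cor:largegap}}}$ still hold (e.g.\ $\delta = (n\log^2 n)^{-1}$, or via a standard diagonalization over $\eps \downarrow 0$); your own parenthetical ``$\delta = n^{-1-o(1)}$'' shows you are aware of this.
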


At the other extreme, setting $\alpha = (np)^{-1/(7 + \nu)}$ and $\delta = \exp( - \alpha^{-1})$, we have the following result.
\begin{corollary}
Let $M_n$ be as in Definition \ref{def:matrixmodel}, and fix $\nu>0$.  For
$p \geq \frac{C_{\ref{thm:main}}\log^{7 + \nu} n}{n},$
$$
\P(M_n \emph{ has eigenvalues with multiplicity}) \leq \exp\Big(-  \frac{1}{2}(np)^{1/(7 + \nu) }  \Big).
$$
\end{corollary}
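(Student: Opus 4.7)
The plan is straightforward: this corollary is a direct consequence of Theorem~\ref{thm:main} after specializing the parameters, applying a union bound over the $n-1$ gaps, and absorbing the resulting polynomial prefactor into the exponential via the sparsity hypothesis. I would not expect any substantive obstacle; the only real task is bookkeeping the constants.

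First, I would verify that the choices $\alpha = (np)^{-1/(7+\nu)}$ and $\delta = \exp(-\alpha^{-1}) = \exp(-(np)^{1/(7+\nu)})$ lie in the admissible range of Theorem~\ref{thm:main}. The lower bound $\alpha \geq (np)^{-1/(7+\nu)}$ holds with equality, and the lower bound $\delta \geq \exp(-\alpha^{-1})$ likewise holds with equality. The upper bound $\alpha \leq c'_{\ref{thm:main}}/\log n$ translates to $\log n \leq c'_{\ref{thm:main}} (np)^{1/(7+\nu)}$, which follows from the sparsity hypothesis $np \geq C_{\ref{thm:main}} \log^{7+\nu} n$, since then $(np)^{1/(7+\nu)} \geq C_{\ref{thm:main}}^{1/(7+\nu)} \log n$; one takes $C_{\ref{thm:main}}$ large enough relative to $c'_{\ref{thm:main}}$, which is permissible as these are the same constants appearing in Theorem~\ref{thm:main}.

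Next, I would observe that if $M_n$ has a repeated eigenvalue then $\delta_i = 0$ for some $i \in \{1,\dots,n-1\}$, which certainly lies below the threshold controlled by Theorem~\ref{thm:main}. Therefore, by a union bound,
$$
\P(M_n \text{ has eigenvalues with multiplicity}) \leq \sum_{i=1}^{n-1} \P(\delta_i = 0) \leq (n-1)\, C_{\ref{thm:main}}\, \frac{\delta}{\alpha} = (n-1)\, C_{\ref{thm:main}}\, (np)^{1/(7+\nu)}\, \exp\!\left(-(np)^{1/(7+\nu)}\right).
$$

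Finally, I would absorb the polynomial prefactor: it suffices to check that $\log\!\bigl(n\, C_{\ref{thm:main}}\, (np)^{1/(7+\nu)}\bigr) \leq \tfrac{1}{2}(np)^{1/(7+\nu)}$. Since $np \geq C_{\ref{thm:main}} \log^{7+\nu} n$ gives $(np)^{1/(7+\nu)} \geq C_{\ref{thm:main}}^{1/(7+\nu)} \log n$, this inequality holds once $C_{\ref{thm:main}}$ is chosen sufficiently large in terms of $\nu$. Combining these bounds yields the stated $\exp(-\tfrac{1}{2}(np)^{1/(7+\nu)})$, completing the argument.
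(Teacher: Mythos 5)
Your proof is correct and follows exactly the route the paper intends: the paper presents this corollary as immediate from Theorem~\ref{thm:main} with the specializations $\alpha = (np)^{-1/(7+\nu)}$ and $\delta = \exp(-\alpha^{-1})$, and you have simply filled in the union bound over the $n-1$ gaps and the (routine) verification that the polynomial prefactor $(n-1)\cdot C_{\ref{thm:main}}\cdot\alpha^{-1}$ is absorbed by half the exponent once $C_{\ref{thm:main}}$ is taken large enough, which the sparsity hypothesis $np \geq C_{\ref{thm:main}}\log^{7+\nu} n$ permits. No gaps.
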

Observe that when $p=1$, which is the dense case considered in \cite{nguyen2017gaps}, the above two corollaries recover \cite[Corollary 2.2]{nguyen2017gaps} and \cite[Corollary 2.3]{nguyen2017gaps}, which are the analogous extreme cases of the bound in \cite[Theorem 2.1] {nguyen2017gaps}.

\begin{remark}\label{r:optimal}
This result improves the range of sparsity in \cite{luh2018sparse} from $n^{-1+\eps}$ for some $\eps >0$ to $\log n^{7 + \nu}/n$.  
Even in the regime $p \geq n^{-1+\eps}$, our result improves on the bound in \cite{luh2018sparse} where the probability of not having a simple spectrum was less than $\exp(- (np)^{1/124})$.  However, we suspect that the optimal bound should be $\exp(-c np)$ for some constant $c > 0$.  The sparsity range of Theorem \ref{thm:main} is near optimal as $p = o(\log n/n)$ yields multiple rows and columns entirely of zeros.  This generates repeated eigenvalues at 0. 
\end{remark}

We also have the same result for adjacency matrices of random Erd\H{o}s--R\'enyi graphs. Let $G(n,p)$ denote the random graph on $n$ vertices with edges appearing independently and with probability $p$. %We recall that the diagonal elements of any adjacency matrix are zero.

\begin{theorem}\label{thm:maingraph}
Let $A_n$ be the adjacency matrix of the random Erd\H{o}s--R\'enyi graph $G(n,p)$, and fix $\nu >0$.  There exist constants $C_{\ref{thm:main}}, c_{\ref{thm:main}}, c'_{\ref{thm:main}} > 0$, depending only on the subgaussian moment $B$, such that for
$$\frac{C_{\ref{thm:main}}\log^{7 + \nu} n}{n} \leq p \leq 1 - \frac{C_{\ref{thm:main}}\log^{7 + \nu} n}{n}$$
 and 
$$(np)^{-1/(7 + \nu)}  \leq \alpha \leq \frac{c'_{\ref{thm:main}}}{\log n},$$
the following holds for the gaps between the eigenvalues, $\delta_i= \lambda_{i+1} - \lambda_i$.  For any $\delta \geq \exp(-\alpha^{-1})$, 
\begin{equation}\label{eq:mainrandomgraph}
\sup_{1 \leq i \leq n-1} \P\left(\delta_i \leq \delta   \exp\left(-c_{\ref{thm:main}} \frac{\log(1/p)}{\log np}\right)  \sqrt{\frac{p}{n}} \right) \leq       \frac{\delta}{\alpha}
\end{equation}

\end{theorem}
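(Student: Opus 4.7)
The plan is to deduce Theorem \ref{thm:maingraph} from Theorem \ref{thm:main} through a rank-one reduction to a mean-zero matrix model. Writing
$$A_n + pI = F_n + pJ,$$
where $J$ is the all-ones matrix and $F_n$ is the symmetric matrix with off-diagonal entries $\chi_{ij} - p$ and zero diagonal, and noting that shifts by multiples of the identity preserve the gaps $\delta_i$, it suffices to prove the claimed bound for $F_n + pJ$. This differs from the mean-zero matrix $F_n$ by the rank-one positive semidefinite perturbation $pJ = (\sqrt{p}\,\one)(\sqrt{p}\,\one)^T$.

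For $p \le 1/2$, I would first establish the analogue of Theorem \ref{thm:main} for $F_n$. Although its entries $\chi_{ij} - p$ are not literally of the form $\xi_{ij}\chi_{ij}$ of Definition \ref{def:matrixmodel}, they are independent for $i \ge j$, mean zero, of variance $p(1-p) \asymp p$, and uniformly bounded (hence subgaussian with uniform moment). Inspection of the proof of Theorem \ref{thm:main} should show that the only features of the entries it uses are independence, mean zero, subgaussianity, and the variance scale $p$; the key ingredients (small-ball and inverse Littlewood--Offord estimates, eigenvector delocalization for sparse Wigner-type matrices, and $\epsilon$-net arguments on the unit sphere) then carry over to $F_n$ with the same quantitative bounds.

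Next, I would transfer the bound from $F_n$ to $F_n + pJ$. Cauchy interlacing for a rank-one positive perturbation yields $\mu_i(F_n) \le \lambda_i(F_n + pJ) \le \mu_{i+1}(F_n)$, but this is insufficient on its own: two consecutive $\lambda_i$ can sandwich a single $\mu_k$ and be anomalously close even when all gaps of $F_n$ are large. To rule this out, I would use the secular equation
$$1 + p \sum_{k=1}^n \frac{|\langle \one, u_k\rangle|^2}{\mu_k - \lambda} = 0,$$
where the $u_k$ are the eigenvectors of $F_n$, together with a fixed-direction delocalization estimate of the form $|\langle \one, u_k\rangle|^2 = O(\mathrm{polylog}(n))$ with high probability. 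This forces each $\lambda_i$ to lie well inside $(\mu_i, \mu_{i+1})$ on a scale comparable to the enclosing gap, giving $\lambda_{i+1} - \lambda_i \gtrsim \mu_{i+1} - \mu_i$ up to polylogarithmic factors. Combining the two steps yields Theorem \ref{thm:maingraph} in the regime $p \le 1/2$.

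For $p > 1/2$, I would pass to the complement graph: the adjacency matrix of $\bar G \sim G(n, 1-p)$ is $\widetilde A = J - I - A_n$, and its bulk eigenvalues are the negatives of those of $A_n$ (up to a shift and the isolated Perron eigenvalue), so the gaps $\delta_i(A_n)$ and $\delta_i(\widetilde A)$ agree up to relabeling. Since $1 - p \ge C\log^{7 + \nu}(n)/n$ in this regime, the result just proved applies to $\widetilde A$ and transfers back to $A_n$. The main obstacle is step two: Cauchy interlacing alone does not preserve small gaps under a rank-one perturbation, so the secular equation together with delocalization of $F_n$'s eigenvectors in the direction $\one$ is essential, and the quantitative dependencies on $n$, $p$, $\delta$, and $\alpha$ must be tracked carefully to recover the precise bound of Theorem \ref{thm:maingraph}.
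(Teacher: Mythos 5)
Your decomposition $A_n + pI = F_n + pJ$ and your reduction of $p > 1/2$ to the complement graph match the paper's broad strategy. But the transfer step is where you diverge, and that step has a genuine gap.

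The paper does \emph{not} prove a gap bound for the centered matrix and then propagate it through the rank-one perturbation via eigenvalue perturbation theory. Instead, it absorbs the rank-one part directly into the net argument: since $pJ_n v$ for $v\in\S^{n-1}$ always lies on the one-dimensional set $\mathcal X_n = \{\kappa\cdot\one : \kappa\in[-pn,pn]\}$, one introduces a fine net $\mathcal B$ of $\mathcal X_n$, proves the translated small-ball bound $\inf_{x\in\mathcal B}\| (A_n - p(J_n-I_n)-\lambda)v - x\|_2 \gtrsim \varepsilon_0\rho'_1\sqrt{pn}$ uniformly over the relevant level sets of the LCD (this requires the translation-invariant generalization noted in \eqref{e:generalize1}), and takes a union bound over $\mathcal B$. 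Since $|\mathcal B|\le \exp(O(n^{1/7}))$ and the per-point failure probability is $\exp(-cn)$, the union bound is free. This sidesteps eigenvalue perturbation theory entirely: one never needs to relate the gaps of $F_n$ to those of $F_n+pJ$.

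Your secular-equation route runs into a concrete obstruction. From Cauchy interlacing, $\lambda_i$ and $\lambda_{i+1}$ straddle $\mu_{i+1}$, and the secular equation
$$1 + p\sum_{k}\frac{|\langle \one, u_k\rangle|^2}{\mu_k - \lambda} = 0$$
only repels roots away from $\mu_{i+1}$ when $|\langle \one, u_{i+1}\rangle|$ is bounded \emph{below}. You cite an upper bound $|\langle\one,u_k\rangle|^2 = O(\mathrm{polylog}\,n)$, but that is the wrong direction: if $|\langle\one,u_{i+1}\rangle|$ is tiny, both $\lambda_i$ and $\lambda_{i+1}$ can collapse onto $\mu_{i+1}$ regardless of the gaps of $F_n$. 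Establishing a quantitative lower bound on $|\langle\one, u_k\rangle|$ for every bulk eigenvector of a sparse matrix at the sparsity $p\sim\log^{7+\nu}(n)/n$ is a substantial delocalization-type result not available in the paper and not obviously easier than the theorem you are trying to prove. (Even the upper bound you invoke is nontrivial at this sparsity.) You flag the gap yourself when you say Cauchy interlacing alone is insufficient, but the fix you propose does not close it. The paper's net-over-$\mathcal X_n$ argument is both simpler and more robust, and as noted in Remark~\ref{r:finite}, it extends immediately to any bounded-rank perturbation.
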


\begin{remark}
Note that an upper bound on $p$ is necessary in this case as $p=1$ generates a \emph{deterministic} matrix with repeated eigenvalues.  Additionally, our argument can be easily applied to random perturbations of a finite rank matrix; see Remark~\ref{r:finite}.  However, for perturbations of an arbitrary matrix, new ideas are needed as many of the delicate net arguments cannot be adapted when the operator norm of the perturbed matrix is large. For dense random graphs, this was done in \cite[Theorem 2.6]{nguyen2017gaps}.
\end{remark}
\subsection{Non-degeneration of Eigenvectors and Nodal Domains of a Random Graph}\label{s:dll}

Consider the eigenfunctions of the Laplacian on a Riemannian manifold. The zero sets of these eigenfunctions partition the space into so-called \emph{nodal domains}. These domains are of great interest to geometers and have been intensively studied (see \cite{cheng1976eigenfunctions, muller1985number, lin1987second} and the references therein). Here we consider a discrete analogue, the nodal domains of eigenvectors for adjacency matrices of random graphs, which has its roots in graph theory and has recently found uses in data science \cite{helffer2009nodal, davies2000discrete, dekel2011nodaldomains}. Given an eigenvector $u$ of an adjacency matrix $A$, we call a subset $D$ of the vertices a \emph{weak nodal domain} if it is connected, $u(x) u(y) \ge 0$ for $x,y \in D$, and $D$ is a maximal subset under these two conditions. A \emph{strong nodal domain} is defined similarly using the strict inequality $u(x) u(y) > 0$. Dekel, Lee, and Linial  conjectured that the notions of strong and weak domains are equivalent for random graphs \cite{dekel2011nodaldomains}, and this was shown for $G(n,p)$ with constant $p$ in \cite{nguyen2017gaps}. A consequence of the following non-degeneration result is that we are able to resolve this conjecture for $p \ge C_{\ref{thm:main}} \log^{7 + \nu}(n)/n$.

\begin{theorem} \label{thm:nodaldomains} Let $A_n$ be the adjacency matrix of the random graph $G(n,p)$, and fix $\nu >0$. For any $D > 0$, there exists a $C=C(D)>0$ such that for $$\frac{C_{\ref{thm:main}}\log^{7 + \nu} n}{n} \leq p \leq 1 - \frac{C_{\ref{thm:main}} \log^{7 + \nu} n}{n},$$ the probability that there exists an eigenvector $v =(v_1, \dots, v_n)$ of $A_n$ with $|v_i| \le n^{-C}$ for some $i$ is at most $Cn^{-D}$.
\end{theorem}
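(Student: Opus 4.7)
Fix indices $i, k \in \{1, \dots, n\}$; since a final union bound over these $n^2$ pairs only costs a polynomial factor, it suffices to show that the $i$-th coordinate of the $k$-th normalized eigenvector $v$ of $A_n$ satisfies $|v_i| \ge n^{-C}$ outside an event of probability $n^{-(D+2)}$. Let $A_n^{(i)}$ denote the principal minor obtained by deleting row and column $i$, with orthonormal eigenpairs $(\mu_j, u_j)_{j=1}^{n-1}$, and let $X \in \R^{n-1}$ be the vector of off-diagonal entries of row $i$, which is independent of $A_n^{(i)}$. Conditioning on the minor and separating the $i$-th row from the eigenvalue equation $A_n v = \lambda v$ with $\lambda = \lambda_k(A_n)$ yields the standard identity
\[
v_i^{-2} \;=\; 1 \;+\; \sum_{j=1}^{n-1} \frac{\langle u_j, X\rangle^2}{(\lambda - \mu_j)^2},
\]
valid whenever $\lambda \notin \{\mu_j\}$. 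It therefore suffices to bound this sum by $n^{2C}$ with the required probability.

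By Cauchy interlacing $\mu_{k-1} \le \lambda \le \mu_k$, so $|\lambda - \mu_j| \ge \gamma$ for every $j \notin \{k-1, k\}$, where $\gamma$ is the smallest consecutive gap of $A_n^{(i)}$. Applying Theorem~\ref{thm:maingraph} to $A_n^{(i)}$ with $\alpha = c'_{\ref{thm:main}}/\log n$ and a polynomially small $\delta$, together with a union bound over the $n-2$ gaps, gives $\gamma \ge n^{-C_1}\sqrt{p/n}$ outside an event of probability at most $n^{-(D+3)}$, for some $C_1 = C_1(D)$. Sub-Gaussian concentration of $\|X\|^2$ around its mean $(n-1)p$ yields $\|X\|^2 \le 2np$ with equally high probability. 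Consequently, the contribution to the sum from indices $j \notin \{k-1, k\}$ is at most $\|X\|^2/\gamma^2 \le n^{C_2}$ for some $C_2 = C_2(D)$.

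The crux is to bound the one or two close terms $j \in \{k-1, k\}$, which amounts to lower-bounding $|\lambda - \mu_{k-1}|$ and $|\lambda - \mu_k|$. The plan is to exploit the secular equation
\[
\lambda - a_{ii} \;=\; \sum_{j=1}^{n-1} \frac{\langle u_j, X\rangle^2}{\lambda - \mu_j},
\]
which realizes $\lambda$ as an implicit function of $X$ given the minor, and to couple it with a sparse Littlewood--Offord type small-ball inequality for the linear combinations $\langle u_j, X\rangle$. The small-ball step requires $\ell^\infty$ delocalization of the minor's eigenvectors, which is available for sparse Wigner matrices in the regime $p \ge C \log^{7 + \nu}(n)/n$. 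The target estimate is $|\lambda - \mu_{k-1}|, |\lambda - \mu_k| \ge n^{-C_3}\sqrt{p/n}$ outside an event of probability at most $n^{-(D+3)}$, which together with the routine upper bound $|\langle u_j, X\rangle| \le \log n$ controls the close terms by $n^{C_4}$ for some $C_4 = C_4(D)$. This anti-concentration step is the main technical obstacle, since small-ball bounds degrade as $p \to 0$ and their dependence on $p$ must be carefully tracked; the dense analogue is treated in \cite{nguyen2017gaps}.

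Assembling the three high-probability events gives $v_i^{-2} \le 1 + n^{C_2} + n^{C_4}$; choosing $C = C(D)$ larger than $\max(C_2, C_4)$ by a safe margin and union-bounding over $(i, k)$ yields the claimed estimate.
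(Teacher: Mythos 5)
Your proposal takes a genuinely different route from the paper, but it contains a gap that you yourself flag and do not close. You work through the exact spectral identity
\[
v_i^{-2} = 1 + \sum_{j=1}^{n-1} \frac{\langle u_j, X\rangle^2}{(\lambda - \mu_j)^2},
\]
splitting the sum into far terms (handled by interlacing and the minimum gap of the minor, via Theorem~\ref{thm:maingraph}) and the two resonant terms $j \in \{k-1, k\}$. For the resonant terms, you only describe a ``plan'' based on the secular equation, $\ell^\infty$ delocalization of the minor's eigenvectors, and a sparse Littlewood--Offord estimate, and you explicitly call this ``the main technical obstacle.'' That obstacle is precisely where the substance of the theorem lives: lower-bounding $|\lambda - \mu_{k-1}|$ and $|\lambda - \mu_k|$ by $n^{-C_3}\sqrt{p/n}$ is \emph{not} a corollary of anything proved elsewhere in the paper and is not at all routine; it requires an anti-concentration input of the same strength as what one is trying to prove. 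Moreover, you invoke $\ell^\infty$ delocalization for sparse adjacency matrices in the regime $p \gtrsim \log^{7+\nu}(n)/n$ as if it were available off the shelf; while such results do exist in the literature, the paper neither proves nor cites them, and importing them would substantially enlarge the toolkit needed. In short, your argument reduces the theorem to a sub-problem that is not easier and is left open.

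The paper's proof (following Nguyen--Tao--Vu) is structured to sidestep exactly this difficulty. Rather than quantitatively bound the full resolvent sum, it argues by contradiction: if $|v_1| \le n^{-D}$, then the eigenvalue equation forces $\|(A_{n-1}-\lambda)v'\|$ to be polynomially small, so (by Lemma~\ref{l:91}, which uses the already-established gap bound for the minor) $v'$ is polynomially close to an honest eigenvector $u'$ of $A_{n-1}$, and then $|(u')^T X|$ is polynomially small. But the anti-concentration bound \eqref{e:thrf} established in the proof of Theorem~\ref{thm:main} already says this event has polynomially small probability. No resonant-term analysis, no secular equation, no delocalization input is needed. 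If you want to salvage your spectral-sum route, the missing step is an unconditional lower bound on $|\lambda_k - \mu_{k\pm 1}|$ together with matching control on $\langle u_{k\pm1}, X\rangle$; the cleaner path is to recast the problem as the paper does, contradicting a small-ball estimate for $|w^T X|$ with $w$ a unit eigenvector of the minor.
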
 

Theorem~\ref{thm:nodaldomains} provides a quantitative lower bound on the mass of the eigenvector components, complementing the vast literature on eigenvector delocalization, which provides upper bounds (see \cite[Section 4]{o2016eigenvectors} and \cite{benigni2020optimal}).

\begin{corollary}
For any $D > 0$, there exists $C=C(D)>0$ such that with probability at least $1 - Cn^{-D}$, the strong and weak nodal domains of $G(n,p)$ are the same. 
\end{corollary}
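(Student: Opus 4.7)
The plan is to derive the corollary as a short deduction from Theorem~\ref{thm:nodaldomains}, with the only genuine content being a deterministic observation about when the weak and strong nodal domain decompositions of a fixed eigenvector agree. The key observation: if $v = (v_1, \dots, v_n)$ is an eigenvector of $A_n$ with no vanishing coordinate, then for any pair of vertices $x, y$ the product $v(x) v(y)$ is nonzero, so the inequality $v(x) v(y) \geq 0$ is equivalent to $v(x) v(y) > 0$. Consequently the families of maximal connected subsets defined by these two conditions coincide, and the weak nodal domains of $v$ equal its strong nodal domains.

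Given this observation, I would apply Theorem~\ref{thm:nodaldomains} with the same parameter $D$ to produce a constant $C = C(D) > 0$ such that, with probability at least $1 - C n^{-D}$, every eigenvector $v$ of $A_n$ satisfies $|v_i| > n^{-C}$ for each $i$. In particular, on this event no coordinate of any eigenvector vanishes. Note that the statement ``every eigenvector'' tacitly requires that the eigenvectors be uniquely defined up to sign, i.e., that the spectrum be simple; this is automatic on the high-probability event, since in any degenerate eigenspace one could rotate to produce a unit eigenvector with an arbitrarily small coordinate, contradicting the non-degeneration bound. (Alternatively, simple spectrum is ensured by Corollary~\ref{cor:largegap} on an event of probability $1 - o(1)$, which one can intersect with the Theorem~\ref{thm:nodaldomains} event.)

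Combining the two ingredients, on the event of probability at least $1 - C n^{-D}$ guaranteed by Theorem~\ref{thm:nodaldomains}, every eigenvector of $A_n$ has no zero coordinate, and so by the deterministic observation its weak and strong nodal domain decompositions coincide. This is precisely the claim of the corollary. There is no real obstacle to overcome here: all of the analytic work (small ball estimates, invertibility of minors, etc.) has already been absorbed into the proof of the non-degeneration bound, and the translation from ``no small coordinates'' to ``weak = strong nodal domains'' is immediate from the definitions.
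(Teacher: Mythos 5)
Your proof is correct and takes essentially the same route the paper intends: the corollary is an immediate deduction from Theorem~\ref{thm:nodaldomains} via the deterministic observation that an eigenvector with no vanishing coordinate has identical weak and strong nodal domains. Your parenthetical about degenerate eigenspaces is a correct and worthwhile clarification (any eigenspace of dimension at least two contains a unit vector with a zero coordinate, so the event in Theorem~\ref{thm:nodaldomains} indeed forces simple spectrum), but it does not represent a departure from the paper's argument.
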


Arora and Bhaskara \cite{arora2011eigenvectors} showed that for random graphs $G(n,p)$ with $p \ge n^{-c}$, where $c$ is a constant that may be determined explicitly,\footnote{The authors give an exact value. However, the published version of an eigenvector delocalization estimate used to prove the result differs slightly from the version given in \cite{arora2011eigenvectors}, where it is cited by the authors in pre-publication form. The value of the constant should be adjusted in light of this.} all non-first eigenvectors of the adjacency matrix $A_n$ of $G(n,p)$ have exactly two weak nodal domains with high probability. Recall that since the adjacency matrix is not centered, the eigenvector corresponding to the largest eigenvalue behaves differently, tending to align itself with the all ones vector \cite{mitra2009entrywise}. Combining this result with our previous corollary yields the following simple statement.

\begin{corollary}
There exists $c>0$ such that the following holds. For any $D>0$ and $p \geq n^{-c}$, there exists $C= C(D)>0$ such that with probability at least $1 - C n^{-D}$, each eigenvector of $G(n,p)$ (except the first) has exactly two strong nodal domains which partition the vertices.
\end{corollary}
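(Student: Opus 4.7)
The plan is to combine the preceding corollary (equivalence of weak and strong nodal domains on $G(n,p)$) with the Arora--Bhaskara result in a straightforward union bound, after verifying that the hypotheses of both results can be made compatible. First I would pick the constant $c > 0$ so that the range $p \ge n^{-c}$ is contained in the sparsity ranges required by both \cite{arora2011eigenvectors} and the corollary above; since $\log^{7+\nu}(n)/n = o(n^{-c})$ for any fixed $c \in (0,1)$ and any $\nu > 0$, the lower bound $p \ge C_{\ref{thm:main}} \log^{7+\nu}(n)/n$ is automatic once $n$ is large, and since we assume $p \le 1/2$ in the Erd\H{o}s--R\'enyi application (or, if desired, since the conclusion is trivial for $p$ too close to $1$ by replacing $G(n,p)$ with its complement, whose eigenvectors coincide up to the leading one with those of $G(n,p)$), the upper bound is also not an obstruction.

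Next, on the event that every non-first eigenvector of $A_n$ has exactly two weak nodal domains (which occurs with probability $1 - O(n^{-D})$ by \cite{arora2011eigenvectors}) intersected with the event that strong and weak nodal domains coincide (which occurs with probability $1 - O(n^{-D})$ by the preceding corollary), every non-first eigenvector has exactly two strong nodal domains. A union bound, with an appropriate adjustment of the constant $C = C(D)$, yields the desired probability lower bound $1 - Cn^{-D}$.

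Finally, I would verify the partition property. By Theorem~\ref{thm:nodaldomains}, with probability at least $1 - Cn^{-D}$ no eigenvector of $A_n$ has a component equal to zero. On this further event, the strict sign condition $v(x)v(y) > 0$ defining strong nodal domains is equivalent to $v(x)$ and $v(y)$ having the same sign, so the two strong nodal domains are precisely the sets of vertices of positive and negative sign (restricted to each connected component of the induced subgraph on the vertices of that sign); their disjointness follows from the strict inequality and their covering the whole vertex set follows from no component vanishing. Intersecting with the event from the previous step and adjusting $C$ once more gives the claim.

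This corollary is essentially a bookkeeping step, so the only real subtlety is arranging the constants and sparsity ranges to match across the two cited inputs; the probabilistic content is entirely carried by the preceding corollary, Theorem~\ref{thm:nodaldomains}, and the Arora--Bhaskara theorem.
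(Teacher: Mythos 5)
Your proof matches the paper's intended approach: the paper gives no explicit argument for this corollary, stating only that it follows by ``combining'' Arora--Bhaskara with the preceding corollary, which is precisely your union bound. One small remark: the separate appeal to Theorem~\ref{thm:nodaldomains} for the partition property is redundant. If the collections of strong and weak nodal domains coincide, then no eigenvector component can vanish --- any vertex $x$ with $v_x = 0$ lies in some weak nodal domain $W$, which by hypothesis is also a strong nodal domain, forcing $v_x v_x > 0$, a contradiction. From there, disjointness and coverage of the two strong nodal domains follow directly from the strict sign condition and the fact that every vertex lies in a weak nodal domain, without any extra probabilistic input. Your handling of the range of $p$ (in particular the lack of an explicit upper bound in the statement) is, if anything, more careful than the paper's implicit treatment.
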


An identical non-degeneration result applies to matrices $M_n$ defined in Definition \ref{def:matrixmodel}.  
\begin{theorem}\label{thm:nondegeneration}
Fix $\nu >0$. For any $D > 0$, there exists a $C=C(D)>0$ such that for $$p \geq \frac{C_{\ref{thm:main}} \log^{7 + \nu} n}{n}$$ the probability that there exists an eigenvector $v =(v_1, \dots, v_n)$ of $M_n$ with $|v_i| \le n^{-C}$ for some $i$ is at most $Cn^{-D}$.
\end{theorem}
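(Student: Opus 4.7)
The plan is to express the $n$-th coordinate of an arbitrary eigenvector of $M_n$ through the Schur complement (resolvent) identity associated with deleting the $n$-th row and column, then combine the eigenvalue-gap bound of Theorem~\ref{thm:main} applied to the $(n-1)\times(n-1)$ principal minor with anti-concentration of a single random linear form in the entries of the last row.  By symmetry I will fix $i=n$; a final union bound over the $n$ coordinates and the $n$ eigenvectors produces Theorem~\ref{thm:nondegeneration}.

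Write $M_n$ in block form with principal minor $M' := M_n^{(n)}\in\R^{(n-1)\times(n-1)}$, off-diagonal column $X\in\R^{n-1}$, and corner $m := m_{nn}$.  Let $\{(\mu_j,w_j)\}_{j=1}^{n-1}$ be the eigenpairs of $M'$ and set $c_j := X^\top w_j$.  For any unit eigenvector $v=(u,v_n)^\top$ of $M_n$ with eigenvalue $\lambda\notin\sigma(M')$, the row identity $u = v_n(\lambda I - M')^{-1}X$ together with $\|v\|=1$ yields the standard Schur-complement formula
$$
v_n^{-2} \;=\; 1 + \sum_{j=1}^{n-1}\frac{c_j^2}{(\lambda-\mu_j)^2},
$$
while the last-row identity produces the secular equation $\sum_j c_j^2/(\lambda-\mu_j) = \lambda - m$.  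I condition on a polynomially-high-probability, $M'$-measurable event $\mathcal{E}$ on which $M'$ has spectral gaps at least $g := n^{-A}\sqrt{p/n}$ (Theorem~\ref{thm:main} applied to the sparse matrix $M'$ with $\delta=n^{-A}$ for a suitable $A=A(D)$), $\ell^\infty$-delocalized eigenvectors $\|w_j\|_\infty\le C_1\sqrt{(\log n)/n}$ (from existing sparse Wigner delocalization), and operator norm $\|M'\|\le C_2\sqrt{np}$; similarly $\|X\|\le C_3\sqrt{np}$ and $|m|\le C_4\sqrt{p\log n}$.  Since $M'$ is independent of $(X,m)$, conditioning on $\mathcal{E}$ preserves the distribution of $(X,m)$.

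Suppose $|v_n|\le n^{-C}$.  By pigeonhole applied to the Schur formula, there exists $j_*$ with $|c_{j_*}|/|\lambda-\mu_{j_*}|\ge n^{C-1/2}/\sqrt{2}$; by interlacing and the gap bound, this $j_*$ must be adjacent to the index of $\lambda$ once $C$ exceeds $A+O(1)$, so $|\lambda-\mu_j|\ge g|j-j_*|/2$ for every $j\ne j_*$.  Isolating the $j_*$-term in the secular equation controls the remainder sum by $R := C_5\|X\|^2 g^{-1}\log n \lesssim n^{A+3/2}\sqrt{p}\,\log n$, and combined with $|\lambda|,|m|\le \|M_n\|$ gives $c_{j_*}^2 \le R|\lambda-\mu_{j_*}|$; eliminating $|\lambda-\mu_{j_*}|$ between these two inequalities yields $|c_{j_*}|\le R\cdot n^{-(C-1/2)}$.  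Conditional on $M'$, the scalar $c_j = \sum_\ell w_{j,\ell}\xi_{n\ell}\chi_{n\ell}$ is a linear form in independent random variables with unit-$\ell^2$-norm, delocalized coefficient vector $w_j$; a Rogozin--Esseen small-ball estimate in the sparse Bernoulli setting (as used in \cite{basak2018sharp, luh2018sparse}) gives $\sup_t\P(|c_j-t|\le\varepsilon)\lesssim \varepsilon/\sqrt{p}$ up to a super-polynomially small correction absorbing atypical sparsity patterns.  A union bound over $j\in[n-1]$, $i\in[n]$, and the $n$ eigenvectors produces a bad-event probability $\lesssim n^3 R\,n^{-(C-1/2)}/\sqrt{p}$, which is at most $n^{-D}$ once $C=C(D)$ exceeds $A+D+O(1)$.

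The main technical obstacle is the polynomial-in-$n$ control of $R$ coming out of the secular equation: the exponent $A$ in the gap lower bound from Theorem~\ref{thm:main} propagates through the interlacing estimate and must be tracked cleanly so that the final $C$ remains polynomial in $D$.  A secondary difficulty is justifying the small-ball estimate at the near-critical sparsity $p\asymp \log^{7+\nu}(n)/n$, where a typical row of $M_n$ contains only $\sim\log^{7+\nu}(n)$ nonzero entries; this is handled by first conditioning on the sparsity pattern $(\chi_{n\ell})_\ell$, which with high probability has support of size $\Theta(np)$, and then applying a dense small-ball inequality to the resulting sub-sum of $\xi_{n\ell}$'s.
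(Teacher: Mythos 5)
Your proposal takes a genuinely different route from the paper's: you go through the full Schur-complement identity and the secular equation, whereas the paper's proof of Theorem~\ref{thm:nodaldomains} (which it states extends verbatim to Theorem~\ref{thm:nondegeneration}) uses only the single eigenvector equation $(M_{n-1}-\lambda)v' = -v_1 X$. The paper's argument is shorter: a small $|v_1|$ forces $\|(M_{n-1}-\lambda)v'\|$ to be small, which by Lemma~\ref{l:91} (the quantitative spectral gap from Theorem~\ref{thm:main} applied to the minor) forces $v'$ to be $n^{-D/8}$-close to an actual eigenvector $u'$ of $M_{n-1}$; combined with $|(v')^T X|$ being small, this contradicts the anti-concentration estimate \eqref{e:thrf} already established in the proof of Theorem~\ref{thm:main}. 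Your route uses the same two ingredients — the gap bound of Theorem~\ref{thm:main} for the minor and anti-concentration of $w^T X$ — but arranges them differently.

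However, there is a genuine gap in your argument at the small-ball step. You claim that conditional on $M'$, $\sup_t \P(|c_j - t| \le \eps) \lesssim \eps/\sqrt{p}$ "up to a super-polynomially small correction," based only on $\ell^\infty$-delocalization of the eigenvectors $w_j$. This is not correct. $\ell^\infty$-delocalization does not rule out arithmetic structure: a coefficient vector whose entries are all approximately $\pm n^{-1/2}$ is perfectly $\ell^\infty$-delocalized yet has LCD only of order $\sqrt{n}$, and then Proposition~\ref{prop:smallballprobability} gives a correction term of order $1/\sqrt{np}$ — polynomially small, not super-polynomially small. Since $1/\sqrt{np}$ is far larger than the scale $R\cdot n^{-(C-1/2)}/\sqrt{p}$ at which you need anti-concentration, your final union bound does not close. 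To justify the claimed bound you would need to show that eigenvectors of $M'$ are arithmetically unstructured, i.e.\ have large (regularized) LCD — precisely the content of Proposition~\ref{p:eigvectors} and the nets/LCD machinery of Section~\ref{sec:incompressible}, which your proposal does not invoke. Your proposed fix (condition on the Bernoulli sparsity pattern and apply a dense small-ball inequality) does not repair this: the resulting coefficient vector is a random sub-vector of $w_j$, and one still needs its LCD to be large, which is non-trivial and is part of what the regularized LCD is designed to handle. In short, your proposal implicitly assumes the hardest part of the paper's argument.
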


\begin{remark}
Theorems \ref{thm:nodaldomains} and \ref{thm:nondegeneration} represent specific examples of a range of possible results.  Specifically, varying $\alpha$ in Theorem \ref{thm:main} can lead to trade-offs in the size of the entries and the strength of the probability bound.  We have chosen to give a simple polynomial bound on the size and probability for the sake of simplifying the presentation.
\end{remark}

We also remark that nodal domains were studied in the recent work \cite{huang2019size}, which showed that there exists a constant $ c \ge 0$ such that for $ p \ge n^{-c}$ the two nodal domains identified in  \cite{arora2011eigenvectors} are balanced, meaning they each contain close to $n/2$ vertices with high probability. Further, \cite{rudelson2017delocalization} shows that, with high probability, any vertex is connected to some vertex in the other domain.

The remainder of the paper is organized as follows. In Section \ref{sec:proofstrategy}, we outline the key steps and intuition for the proof of Theorem \ref{thm:main}. In Sections \ref{sec:compressible} and \ref{sec:incompressible}, we prove several preliminary results about eigenvectors of sparse random matrices. In Section \ref{sec:mainproof}, we provide the proof of Theorem \ref{thm:main}.  In Section \ref{sec:randomgraph} we provide the necessary modifications to extend Theorem \ref{thm:main} to non-centered random matrices, such as the adjacency matrices of Erd\H{o}s--R\'enyi graphs, proving Theorem \ref{thm:maingraph}.  Finally, in Section \ref{sec:application}, we prove Theorem \ref{thm:nodaldomains}.

\section{Proof Strategy} \label{sec:proofstrategy}
The proof follows the same broad outline as \cite{luh2018sparse}.  
For $M_n$ as in Definition \ref{def:matrixmodel}, we decompose the matrix as
\begin{equation} \label{eq:decompositionMn}
M_n = 
\begin{pmatrix} 
  M_{n-1} & X \\ 
  X^T & m_{nn} 
\end{pmatrix},
\end{equation}
where $X = [ x_1, \dots, x_{n-1} ]  \in \mathbb{R}^{1 \times (n-1)}$.  For a matrix $W$, let ${\lambda_{n}(W) \ge \dots \ge \lambda_1(W)}$ be the eigenvalues of $W$. Fix an integer $i$ such that $1\le i\le n$ and let $v = (x , a)$ (where $x \in \mathbb{R}^{n-1}$ and $a \in \mathbb{R}$) be the unit eigenvector associated to $\lambda_i(M_n)$. By definition we have
$$
\begin{pmatrix}
M_{n-1} & X \\ 
X^T & m_{nn}
\end{pmatrix}
\begin{pmatrix}
x \\
a
\end{pmatrix} = 
\lambda_i(M_n) \begin{pmatrix}
x \\
a
\end{pmatrix}.
$$
For the top $n-1$ coordinates this gives (writing $\lambda_i(M_n)$ for $\lambda_i(M_n) \operatorname{Id}$)
$$
(M_{n-1} - \lambda_i(M_n)) x + a X = 0.
$$
Let $w$ be the eigenvector of $M_{n-1}$ corresponding to $\lambda_{i}(M_{n-1})$.  Multiplying on the left by $w^T$, we obtain
\begin{equation}\label{eq:eiggap1}
|a w^T X | = |w^T (M_{n-1} - \lambda_i(M_n)) x| = |\lambda_i(M_{n-1}) - \lambda_i(M_n)||w^T x|.
\end{equation}
By the Cauchy interlacing theorem, we have $\lambda_i(M_n) \leq \lambda_i (M_{n-1}) \leq \lambda_{i-1} (M_n)$. 

Since the entries of $M_n$ are subgaussian, we have with high probability that
$$
\lambda_i \in [-K \sqrt{pn}, K \sqrt{pn}]
$$
for some constant $K$ that depends only on the subgaussian moment $B$ of the entries.
Therefore, the average size of an eigenvalue gap is roughly $O\left(\frac{\sqrt{pn}}{n} \right) = O\left(\sqrt{\frac{p}{n}}\right).$  For any $\hat \delta >0$, let $\mathcal{E}_i = \mathcal{E}_i \left( \hat \delta \right)$ denote the event that 
$$
\lambda_{i+1} - \lambda_i  \leq \hat \delta  \sqrt{\frac{p}{n}}.
$$ We also let $\mathcal{G}_{i}$ be the intersection of the event $\mathcal{E}_i$ with the event that the eigenvector $v = (x, a)$ with eigenvalue $\lambda_i$ has $|a| \geq n^{-1/2}$.  Therefore, by (\ref{eq:eiggap1}) and using $|w^T x| \le1$, on the event $\mathcal{G}_{i}$, we have
\begin{equation}\label{e:seeme}
|w^T X| \leq \hat \delta \sqrt{p}.
\end{equation}
We wish to show this is unlikely. 

Recall that the theory of small ball probability (e.g.\ \cite{nguyen2013smallball}) examines the probability that a random variable takes values in a small interval.  
Therefore, we have reduced the problem to understanding the small ball probability of the inner product of a random vector with the eigenvector $w$.  It is known that this small ball probability is related to the amount of ``disorder" in the coordinates of the eigenvector.  Broadly speaking, a large amount of disorder implies the small ball probability is small. We deal with the case that $w$ has high disorder eigenvectors using these results. To exclude all eigenvectors with low disorder, we employ a covering argument, varying our approach according to the structure of the eigenvector.

The covering argument is completed in multiple stages.  For a fixed $\lambda$, we consider $M_n - \lambda \operatorname{Id}$ acting on the unit sphere, where $\operatorname{Id}$ is the identity operator.  Following the prescription initiated in a series of works \cite{litvak2005smallest, tao09littlewoodofford,rudelson2008littlewood, rudelson2008inverse,basak2017sparse,basak2018sharp}, we decompose the sphere into several sets that each offer their own advantages.  \emph{Compressible} vectors are those vectors that are close to \mbox{$m$-sparse vectors} for some parameter $m$.  
%While these vectors do not offer any benefits concerning their small ball probabilities, they have a low metric entropy.
% In the dense random matrix case, these vectors are amenable to a union bound argument which shows that compressible vectors are unlikely to be eigenvectors.  For the sparse case, a more subtle argument is required as the small ball probability is no longer small enough to overcome the union bound.  
In \cite{basak2017sparse}, %in controlling the least singular value of a sparse (non-symmetric) random matrix, the authors used a chaining-type argument to find large sub-matrices with exactly one large non-zero entry in each row.  This combinatorial result allowed them to 
it was shown that the product of the matrix with a compressible vector has many large coordinates and therefore large $\ell_2$ norm.  We adapt this argument to our symmetric matrix case to exclude compressible vectors.
We next consider \emph{dominated} vectors, which are those vectors whose coordinates outside the $m$ largest coordinates have a small ratio of $\ell_2$ norm to $\ell_\infty$ norm.  This type of vector was introduced in \cite{basak2017sparse}.  As these vectors are also nearly sparse, they can be excluded similarly to the compressible vectors.  

Finally, for vectors that are neither compressible nor dominated, we use a stratification according to a measure of structure, the LCD. The LCD was introduced in \cite{rudelson2008littlewood} and is defined later.  As our random matrix is symmetric, there is dependence between the rows which prevents us from applying small ball probability estimates to each coordinate independently.\footnote{This obstacle is what prevents us from reaching the optimal threshold for $p$ by simply following the argument in  \cite{basak2017sparse}, which considered  non-symmetric matrices for $p \ge ( C \log n)/n$.} To address this problem, for a fixed $v$ we partition the coordinates of $v$ into small subsets; this is similar to the method used in \cite{vershynin2014symmetric}.  For a fixed subset, after conditioning on the columns of $M_n - \lambda$ outside of the subset, we can extract more independent coordinates to use in small ball estimates.  
\begin{comment}
The trade-off is that the larger the amount of disorder, the more vectors there are, but on the other hand, the larger the disorder, the smaller the anti-concentration estimate.  This motivates the division into level sets of disorder and on each level set, these opposing forces cancel exactly. 
\end{comment}
There is some flexibility in the size of these subsets, and this ultimately results in the trade-off between the error probability and gap size in Theorem \ref{thm:main}.      

The previous steps are done for a fixed $\lambda$ and hold with exponentially high probability.  Taking a union bound over a fine enough net of the interval $[-K \sqrt{pn}, K \sqrt{pn}]$ completes the argument.

A similar approach was applied in \cite{luh2018sparse}, under the assumption that $p \geq n^{-1+ \eps}$ for some $\eps >0$ and therefore small polynomial terms could often be neglected.  In our current setting, where $p$ is on the order of $\log^C n/n$, it turns out that the above decomposition is insufficient primarily because the vectors that are not dominated or compressible can have a wide range of $\ell_2$ mass in their coordinates outside of the $m$ largest.  %This discrepancy can introduce small polynomial error terms between upper and lower bounds in the covering argument.  
Therefore, we further decompose the vectors by their $\ell_2$ mass in the relevant coordinates.  Working in each of these classes allows some key technical estimates that bypass the small polynomial losses from \cite{luh2018sparse}.  These technical improvements generate the improvement in the range of sparsity and the error probability.  Furthermore, in \cite{luh2018sparse}, the result was only concerned with a non-zero separation of the eigenvalues.  A more careful accounting of the small ball probability greatly improves the (implicit) small ball estimate in \cite{luh2018sparse}.

\section{Compressible and Dominated Vectors} \label{sec:compressible}

The goal of this section is to prove Proposition~\ref{prop:eigvecnotcomp}, which shows that any eigenvector of $M_n$ cannot be close to a sparse vector, in a certain quantitative sense (with high probability). Before proceeding to its proof, we introduce a few necessary definitions and lemmas.

\subsection{Decomposition of the sphere} \label{sec:decomp}
We now formally define the decomposition of the unit sphere used in the proof sketch of Section~\ref{sec:proofstrategy}.
\begin{definition}\label{d:sparse}
Fix $m< n$. The set of $m$-sparse vectors is given by 
$$
\text{Sparse}(m) = \{x \in \R^n: |\supp(x)| \leq m\}.
$$
Furthermore, for $\delta > 0$, we define the compressible and incompressible vectors by
$$
\Comp(m, \delta) = \{x \in \mathbb{S}^{n-1}: \exists y \in \text{Sparse}(m) \text{ such that } \|x - y\|_2 \leq \delta\},
$$
and 
$$
\Incomp(m, \delta) = \{x \in \mathbb{S}^{n-1}:  x \notin \Comp(m,\delta) \}.
$$
\end{definition}
For any $ 1 \le n \le n'$,  we let $[n]$ denote the set $\{1, 2, \dots , n\}$ and $[n:n']$ denote the set $\{n, n+1, \dots , n'\}$.

\begin{definition}\label{d:vectorsubscript}
For any $x \in \mathbb{S}^{n-1}$, let $\pi_x : [n] \rightarrow [n]$ be a permutation which arranges the absolute values of the coordinates of $x$ in  non-increasing order. 
For $1 \leq m \leq m' \leq n$ denote by $x_{[m:m']} \in \R^n$ the vector with coordinates 
$$
x_{[m:m']}(j) = x_j \cdot \mathbbm{1}_{[m:m']}(\pi_x(j)).
$$
For any $c < 1$ and $m \leq n$, define the set of vectors with dominated tail by
$$
\Dom(m, c) = \{x \in \S^{n-1}: \|x_{[m+1:n]} \|_2 \leq c \sqrt{m} \|x_{[m+1:n]} \|_\infty \}.
$$
\end{definition}
This definition was first given in \cite{basak2017sparse}. Like compressible vectors, vectors with dominated tail are close to being sparse, though in a different  way. This approximate sparsity facilitates the proof of the following key bound, Proposition~\ref{prop:compressible}.

\subsection{Bounds for compressible and dominated vectors}

We first state a high probability bound on the operator norm of $M_n$, which was defined in Definition~\ref{def:matrixmodel}.

\begin{lemma}[{\cite[Proposition 5.2]{luh2018sparse} \text{ and } \cite[Proposition 1.10]{wei2017investigate}}]\label{l:opnorm}
For $M_n$ defined in Definition \ref{def:matrixmodel}, there exist constants $C_{\ref{l:opnorm}}, K, c_{\ref{l:opnorm}} >0,$ depending only on the subgaussian moment $B$, such that for 
$
p \geq \frac{C_{\ref{l:opnorm}} \log n}{n}
$ 
and
$n \ge (c_{\ref{l:opnorm}})^{-1}$,
$$
\P(\|M_n \| \geq K \sqrt{pn} ) \leq \exp(-c_{\ref{l:opnorm}} p n).
$$
\end{lemma}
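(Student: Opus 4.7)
The plan is to establish this operator-norm concentration bound for a sparse symmetric random matrix with subgaussian entries by the trace moment method, which typically produces the sharp exponent $e^{-cpn}$ needed here.

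First I would use that, since $M_n$ is symmetric, $\|M_n\|^{2k}\le \tr(M_n^{2k})$ for every positive integer $k$. Markov's inequality then yields
$$
\P\bigl(\|M_n\| \ge K\sqrt{np}\bigr) \le (K\sqrt{np})^{-2k}\,\E\tr(M_n^{2k}).
$$
Expanding the trace produces a sum over closed walks of length $2k$ on $[n]$ of terms $\E[m_{i_1 i_2}m_{i_2 i_3}\cdots m_{i_{2k}i_1}]$. Because the $m_{ij}$ are mean zero and independent up to symmetry, only walks in which every distinct edge is traversed at least twice contribute. Using that $\E|m_{ij}|^s \le p\cdot(CBs)^{s/2}$ for $s\ge 2$ (which combines the subgaussian tail of $\xi_{ij}$ with the Bernoulli factor $\chi_{ij}$ of mass $p$), a walk with $\ell$ distinct edges and edge multiplicities $s_1,\dots,s_\ell$ is bounded in absolute value by $p^{\ell}\prod_e(CBs_e)^{s_e/2}$.

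Next I would carry out a Wigner-style walk encoding, adapted to the sparse regime, to obtain a bound of the form
$$
\E\tr(M_n^{2k}) \le n\cdot (Cpn)^{k}
$$
uniformly for $k$ up to some small constant multiple of $np$. Plugging this into the Markov estimate with $k = \lfloor c\,np\rfloor$ and choosing $K$ large enough in terms of the subgaussian moment $B$, the right-hand side becomes at most $n(C/K^{2})^{k} \le \exp(-c'np)$; the hypothesis $p\ge C\log n/n$ enters exactly so that $k \gg \log n$, allowing the prefactor $n$ to be absorbed into the exponential decay.

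The hard part will be keeping the moment bound $n(Cpn)^{k}$ valid all the way up to $k$ of order $np$. Naive Wigner-type counting is sharp only for $k\sim \log n$ and degrades when walks have many repeated edges; moreover, the subgaussian moment factors $(CBs)^{s/2}$ compound unfavourably whenever a single edge is traversed many times. To get through this, I would need a Soshnikov-style separation of the walk's topology (its shape as a closed stroll on a tree) from its labelling (which vertices of $[n]$ are visited), together with a refined accounting of edges of very high multiplicity, in the spirit of the sparse-matrix walk encodings developed in the works cited for this lemma.
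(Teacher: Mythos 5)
The paper does not prove Lemma~\ref{l:opnorm}: it is imported verbatim from \cite[Prop.~5.2]{luh2018sparse} and \cite[Prop.~1.10]{wei2017investigate}, so there is no internal argument to compare against. What I can assess is whether your sketch would, if completed, deliver the stated bound.

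Your trace-moment framework is a legitimate route, and the Markov step, the parity/connectivity reduction, and the final choice $k = \lfloor c\,np\rfloor$ are all set up correctly. But the proposal rests entirely on the unproved estimate $\E\tr(M_n^{2k}) \le n(Cpn)^k$ uniformly for $k$ up to order $np$, and you acknowledge that this is ``the hard part'' without supplying it. That is not a side detail: it is the whole content of the lemma. The difficulty is genuine. For a walk with $\ell < k$ distinct edges, the subgaussian moment factors $\prod_e (CB)^{s_e} s_e^{s_e/2}$ can be as large as $(CBk)^{k}$ (from a single edge traversed $2k$ times), and this must be absorbed by the loss of $(pn)^{k-\ell}$ in vertex count together with a careful count of walk shapes; moreover, when $p$ is close to $1$ you are pushing $k$ to order $n$, far beyond the regime where naive F\"uredi--Koml\'os counting is valid. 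Invoking ``Soshnikov-style separation \dots in the spirit of the sparse-matrix walk encodings developed in the works cited'' is precisely the step that needs to be carried out, and as written you are deferring the key combinatorics back to the references this lemma already cites. I will also note that several proofs of bounds of this type avoid the high-moment trace estimate altogether, instead splitting the sphere into light and heavy (or sparse/spread) vectors, controlling light contributions by Bernstein-type bounds over nets, and handling heavy contributions via degree concentration; if you want a self-contained argument rather than a pointer to the cited works, that decomposition is often the easier road to the $\exp(-c\,pn)$ tail.
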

\noindent For the remainder of this work, all references to the constant $K$ refer to the $K$ provided by Lemma~\ref{l:opnorm}.

The compressible and dominated vectors were previously resolved in \cite{luh2018sparse} down to the optimal scale $p \geq C \log n/ n$. Given some $\bar{C}_{\ref{prop:compressible}}>0$, we define the parameters 

 $$ \ell_0 = \left \lceil \frac{\log 1/(8p)}{\log \sqrt{pn}} \right \rceil, \qquad  \rho = (\bar{C}_{\ref{prop:compressible}})^{-\ell_0 - 6}.$$

\begin{proposition}[{\cite[Proposition 5.3]{luh2018sparse}}]\label{prop:compressible}
There exist constants $C_{\ref{prop:compressible}}, \bar{C}_{\ref{prop:compressible}}, c_{\ref{prop:compressible}}, c'_{\ref{prop:compressible}},  > 0$, depending only on the subgaussian moment $B$ of Definition~\ref{def:matrixmodel}, such that the following holds. If $p, m, \lambda$ satisfy
\begin{equation}\label{e:m}
p \geq \frac{C_{\ref{prop:compressible}} \log n}{n}, \quad p^{-1} \leq m \leq c_{\ref{prop:compressible}}n  , \text{ and } \lambda \in [-K \sqrt{pn}, K \sqrt{pn}],
\end{equation}
then with probability at least $1 - \exp(- {c}'_{\ref{prop:compressible}} pn)$,
$$  \|(M_n - \lambda) x\|_2 \ge  c_{\ref{prop:compressible}} \rho \sqrt{pn}$$
for all $x \in  \Comp(m, \rho) \cup \Dom(m, c'_{\ref{prop:compressible}})$ and $n > (c'_{\ref{prop:compressible}})^{-1}$.

\end{proposition}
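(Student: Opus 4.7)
The plan is to follow the standard sparse small-ball-plus-net strategy introduced in \cite{basak2017sparse} and adapted in \cite{luh2018sparse}, treating the compressible and dominated classes in parallel since they are both ``essentially sparse.'' The goal is to reduce to a fixed sparse unit vector $y$ with $|\supp(y)| \leq m$, show the coordinatewise lower bound
$$\P\bigl(\|(M_n - \lambda)y\|_2 \leq c\sqrt{pn}\bigr) \leq \exp(-c'' pn),$$
and then beat the cardinality of an appropriate net via the choice of $\rho$.

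First I would establish the fixed-vector estimate. For $j \notin \supp(y)$ the coordinate $((M_n - \lambda)y)_j = \sum_{k} \xi_{jk}\chi_{jk} y_k$ depends only on entries in the $j$-th row outside $\supp(y)$, which for distinct such $j$ are mutually independent; so the symmetry of $M_n$ does not obstruct us here. A Paley--Zygmund/Rogers--Shephard type argument (using that each $\xi_{jk}\chi_{jk}$ has variance $p$ and the $\chi_{jk}$ are Bernoulli$(p)$) shows that each such coordinate has magnitude at least $c_1\sqrt{p}$ with probability at least $c_2 p$, so in particular the expected number of coordinates exceeding this threshold is at least $c_3 pn$. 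Bernstein's or Chernoff's inequality then upgrades this to a deviation estimate: with probability at least $1 - \exp(-c_4 pn)$, at least $c_5 pn$ coordinates of $(M_n - \lambda)y$ exceed $c_1\sqrt{p}$ in magnitude, so $\|(M_n-\lambda)y\|_2^2 \geq c_5 pn \cdot c_1^2 p$, giving the target $\sqrt{pn}$ scaling after adjusting constants. (The condition $m \leq c_{\ref{prop:compressible}} n$ guarantees that $\supp(y)$ does not swallow all the good coordinates.)

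Next I would run the union bound. A standard $\rho/2$-net of the $m$-sparse unit sphere has cardinality at most $\binom{n}{m}(C/\rho)^m \leq \exp(C' m \log(en/m) + m\log(C/\rho))$. With $m \leq c n$ and $\rho$ chosen to be $(\bar{C}_{\ref{prop:compressible}})^{-\ell_0 - 6}$, one checks that $m \log(1/\rho) \lesssim \ell_0 m \log \bar{C}$; since the definition $\ell_0 = \lceil \log(1/(8p))/\log\sqrt{pn}\rceil$ gives $\ell_0 \log(1/p) \lesssim \log(1/p)/\log\sqrt{pn} \cdot \log(1/p)$, and the hypothesis $p \geq C_{\ref{prop:compressible}}\log n/n$ controls these logarithms, one can make $\bar C$ sufficiently large that the net size is dominated by $\exp(c_4 pn / 2)$. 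Combining with Lemma~\ref{l:opnorm} and the triangle inequality $\|(M_n - \lambda)x\|_2 \geq \|(M_n-\lambda)y\|_2 - (\|M_n\| + |\lambda|)\|x - y\|_2 \geq c\sqrt{pn} - 2K\sqrt{pn}\cdot\rho \geq (c/2)\sqrt{pn}$ resolves the compressible case, with plenty of room to absorb the factor $\rho$ on the right hand side.

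Finally, for $x \in \Dom(m, c'_{\ref{prop:compressible}})$ the tail satisfies $\|x_{[m+1:n]}\|_2 \leq c'_{\ref{prop:compressible}}\sqrt{m}\|x_{[m+1:n]}\|_\infty$, so truncating to $\pi_x^{-1}([m])$ produces an $m$-sparse approximant $y$ with $\|x - y\|_2 \leq c'_{\ref{prop:compressible}}\sqrt{m}\|x_{[m+1:n]}\|_\infty$; since $\|x_{[m+1:n]}\|_\infty \leq 1/\sqrt{m}$ by the ordering, we get $\|x-y\|_2 \leq c'_{\ref{prop:compressible}}$, and a small enough choice of $c'_{\ref{prop:compressible}}$ lets the same triangle-inequality argument close. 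The main obstacle will be the interplay between $\ell_0$, $\rho$, and $p$: making the net size genuinely smaller than $\exp(c pn)$ when $p$ is as small as $C\log n/n$ forces $\rho$ to be a mild negative power of some constant depending on $\ell_0$, and it is this delicate choice --- already calibrated in \cite[Proposition 5.3]{luh2018sparse} via the parameter $\bar C_{\ref{prop:compressible}}$ --- that underlies the precise form of the bound.
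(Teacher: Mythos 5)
The sketch does not close in the sparse regime where the proposition is actually needed, and the failure is structural rather than cosmetic.

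The central problem is the one-shot net. You put a $\rho/2$-net on the $m$-sparse sphere for $m$ up to $c_{\ref{prop:compressible}}n$ and then union-bound, but the cardinality of that net is at least $\binom{n}{m} \ge \exp(cm\log(n/m)) = \exp(\Omega(n))$ already from the choice of support alone, while your fixed-vector failure probability is $\exp(-c_4 pn)$, which at the scale $p = C\log n/n$ is merely $n^{-O(1)}$. The product $\exp(\Omega(n))\cdot n^{-O(1)}$ is not even less than $1$, and this deficit cannot be repaired by tuning $\bar{C}_{\ref{prop:compressible}}$: increasing $\bar C$ makes $\rho = \bar{C}^{-\ell_0 - 6}$ \emph{smaller}, so the $\rho$-net gets finer and larger, which is the opposite of what you assert. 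Indeed the $m\log(en/m)$ contribution alone already overwhelms $\exp(c_4 pn/2)$ irrespective of $\rho$. The role of $\rho$ in the cited statement is not a lever to compress the net; it is the cumulative loss from an iteration over $\ell_0 = \lceil \log(1/(8p))/\log\sqrt{pn}\rceil$ sparsity scales, which is precisely what makes the argument affordable when $p$ is close to $\log n/n$. Without that multi-scale bootstrap — which is the heart of \cite[Proposition 5.3]{luh2018sparse} and of the Basak--Rudelson method it adapts — the union bound over a single global net of the $c_{\ref{prop:compressible}}n$-sparse sphere is hopeless.

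There is also a quantitative slip in the fixed-vector estimate. You take the worst-case magnitude threshold $c_1\sqrt{p}$ (appropriate for well-spread $m$-sparse $y$, where the good-coordinate probability is actually a constant) together with the worst-case probability $c_2 p$ (appropriate for concentrated $y$ such as $y = e_1$, where the nonzero coordinates actually have magnitude $\Theta(1)$, not $\Theta(\sqrt p)$). No single $y$ realizes both worst cases simultaneously, and their product gives $\|(M_n-\lambda)y\|_2^2 \gtrsim p^2 n$, i.e.\ $\|(M_n-\lambda)y\|_2 \gtrsim \sqrt p\cdot\sqrt{pn}$, which is not the "$\sqrt{pn}$ scaling" you claim; since $\rho \gg \sqrt{p}$ throughout the sparse regime, it is also strictly weaker than the stated target $\rho\sqrt{pn}$. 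The correct anti-concentration bookkeeping has to track how the magnitude–probability trade-off varies with the spread of $y$, which again forces one into the multi-scale decomposition.
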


\begin{remark}\label{r:rhoremark}
Note that if $p \geq n^{-1+c}$ for some constant $c>0$, then $\rho$ is bounded below by a constant.  At the optimal scale $p = C \log n/n$, there exist constants $C_1, C_2 . c_1, c_2 >0$ such that
$$
C_1 \exp(- c_1 \log n/ \log \log n) \leq \rho \le  C_2 \exp(-c_2 \log n/ \log \log n).
$$
\end{remark}

%To apply Proposition~\ref{prop:compressible} to exclude compressible and dominated eigenvectors, we need to know that any potential eigenvalue $\lambda$ of $M_n$ satisfies condition \eqref{e:m}, which is provided by 
We now come to the main result of this section, which combines the previous two proposition to exclude the possibility of compressible or dominated eigenvectors.

\begin{proposition}\label{prop:eigvecnotcomp}
Let be $M_n$ as in Definition~\ref{def:matrixmodel} with $p \geq C_{\ref{prop:compressible}} \frac{\log n}{n}$. For 
$p^{-1} \leq m \leq c_{\ref{prop:compressible}}n$
and $n \ge (c_{\ref{prop:eigvecnotcomp}})^{-1}$,
$$
\P(\emph{there exists an eigenvector } v \in \Comp(m, \rho) \cup \Dom(m, c_{\ref{prop:compressible}}) ) \leq \exp(- c_{\ref{prop:eigvecnotcomp}} p n)
$$
for some constant $c_{\ref{prop:eigvecnotcomp}} > 0$.
\end{proposition}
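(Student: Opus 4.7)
The plan is a standard net argument combining Lemma~\ref{l:opnorm} (control of $\|M_n\|$) with Proposition~\ref{prop:compressible}, applied at a sufficiently fine net of candidate eigenvalues. Since Proposition~\ref{prop:compressible} gives a lower bound $\|(M_n-\lambda)x\|_2 \geq c_{\ref{prop:compressible}}\rho\sqrt{pn}$ that is \emph{uniform} in $x\in\Comp(m,\rho)\cup\Dom(m,c'_{\ref{prop:compressible}})$ for each fixed $\lambda$, and since any eigenvector satisfies $(M_n-\lambda_i)v=0$, we only need to discretize the eigenvalue parameter finely enough that the approximation error $|\lambda_i-\lambda'|$ is smaller than $c_{\ref{prop:compressible}}\rho\sqrt{pn}$.

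Concretely, I would first invoke Lemma~\ref{l:opnorm} to restrict to the event $\EE_{\mathrm{op}}:=\{\|M_n\|\leq K\sqrt{pn}\}$, which holds with probability at least $1-\exp(-c_{\ref{l:opnorm}}pn)$. On $\EE_{\mathrm{op}}$, every eigenvalue $\lambda_i$ of $M_n$ lies in $I:=[-K\sqrt{pn},K\sqrt{pn}]$. Next, let $\eta := \tfrac12 c_{\ref{prop:compressible}}\rho\sqrt{pn}$ and take $\mathcal{N}\subset I$ to be an $\eta$-net of $I$, so that $|\mathcal{N}| \leq 2K\sqrt{pn}/\eta + 1 = O(1/\rho)$. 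Apply Proposition~\ref{prop:compressible} at each $\lambda'\in\mathcal{N}$ and take a union bound: with probability at least
\[
1-|\mathcal{N}|\exp(-c'_{\ref{prop:compressible}}pn) \geq 1-O(\rho^{-1})\exp(-c'_{\ref{prop:compressible}}pn),
\]
the bound $\|(M_n-\lambda')x\|_2\geq c_{\ref{prop:compressible}}\rho\sqrt{pn}$ holds simultaneously for every $\lambda'\in\mathcal{N}$ and every $x\in\Comp(m,\rho)\cup\Dom(m,c'_{\ref{prop:compressible}})$. Call this event $\EE_{\mathrm{net}}$.

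Now suppose on $\EE_{\mathrm{op}}\cap\EE_{\mathrm{net}}$ that $v$ is a unit eigenvector of $M_n$ with eigenvalue $\lambda_i\in I$ and $v\in \Comp(m,\rho)\cup\Dom(m,c'_{\ref{prop:compressible}})$. Choose $\lambda'\in\mathcal{N}$ with $|\lambda_i-\lambda'|\leq \eta$; then
\[
\|(M_n-\lambda')v\|_2 = |\lambda_i-\lambda'|\,\|v\|_2 \leq \eta = \tfrac12 c_{\ref{prop:compressible}}\rho\sqrt{pn},
\]
contradicting the lower bound from $\EE_{\mathrm{net}}$. Therefore, on $\EE_{\mathrm{op}}\cap\EE_{\mathrm{net}}$, no such eigenvector exists.

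It remains to check that the failure probability is $\exp(-c_{\ref{prop:eigvecnotcomp}}pn)$. This is the only potentially delicate point, since $\rho$ may be as small as $\exp(-c\log n/\log\log n)$ at the lower end $p\asymp \log n/n$ (see Remark~\ref{r:rhoremark}). However, $\log(1/\rho) = O(\log n/\log\log n)$, while $pn\geq C_{\ref{prop:compressible}}\log n$, so $\log|\mathcal{N}|=O(\log(1/\rho)) = o(pn)$ and the net-size factor is comfortably absorbed: for $n$ large enough (which we enforce via the hypothesis $n\geq(c_{\ref{prop:eigvecnotcomp}})^{-1}$), we obtain
\[
\P(\EE_{\mathrm{op}}^c\cup\EE_{\mathrm{net}}^c)\leq \exp(-c_{\ref{l:opnorm}}pn) + O(\rho^{-1})\exp(-c'_{\ref{prop:compressible}}pn) \leq \exp(-c_{\ref{prop:eigvecnotcomp}}pn)
\]
for a suitable $c_{\ref{prop:eigvecnotcomp}}>0$ depending only on $B$. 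The only real obstacle is this bookkeeping step, namely ensuring the (possibly sub-polynomially small) $\rho$ does not overwhelm the exponential decay, which is guaranteed by the hypothesis $p\geq C_{\ref{prop:compressible}}\log n/n$.
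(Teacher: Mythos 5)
Your proof is correct and follows the same strategy as the paper's: a $\Theta(\rho\sqrt{pn})$-net over $[-K\sqrt{pn},K\sqrt{pn}]$, a union bound using Proposition~\ref{prop:compressible}, control of the net cardinality via Remark~\ref{r:rhoremark} (noting $\log(1/\rho)=o(pn)$), and Lemma~\ref{l:opnorm} to confine the spectrum. The only cosmetic difference is that you spell out the bookkeeping step that the net-size factor $O(\rho^{-1})$ is absorbed by the exponential decay, which the paper handles by citing Remark~\ref{r:rhoremark} in passing.
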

\begin{proof}
Let $\mathcal{N}$ denote a $c_{\ref{prop:compressible}} \rho \sqrt{pn}$-net of the interval $[-K \sqrt{pn}, K \sqrt{pn}]$ with 
\begin{equation}
| \mathcal{N}|  \le \frac{4 K}{c_{\ref{prop:compressible}} \rho}.
\end{equation}

 If there exists a compressible or dominated eigenvector $v$ with eigenvalue $\lambda \in [-K \sqrt{pn}, K \sqrt{pn}]$, then there exists a $\lambda_0 \in \mathcal{N}$ such that
$$
\|(M_n - \lambda_0) v\|_2 = \|(\lambda - \lambda_0) v\|_2 \leq c_{\ref{prop:compressible}} \rho \sqrt{pn}.
$$
By a union bound and Proposition \ref{prop:compressible}, the probability of this event is bounded by 
$$
|\mathcal{N}| \exp(-c_{\ref{prop:compressible}} p n) \leq \exp(- c_{\ref{prop:eigvecnotcomp}} p n).
$$
for large enough $C_{\ref{prop:compressible}}$ and small enough $c_{\ref{prop:eigvecnotcomp}}$;
to bound $|\mathcal{N}|$, we used Remark~\ref{r:rhoremark}.
Finally, the event that that there exists an eigenvalue outside of the interval $[-K \sqrt{pn}, K \sqrt{pn}]$ is bounded by $\exp(-c_{\ref{l:opnorm}} pn)$, by Lemma~\ref{l:opnorm}.  Shrinking $c_{\ref{prop:eigvecnotcomp}}$ allows us to take a union bound to include this event, and concludes the proof.
\end{proof}

\section{Incompressible Vectors} \label{sec:incompressible}
In this section, we show that $M_n$ does not have structured eigenvectors. We begin with Section~\ref{s:sbp}, where we elucidate the connection between small ball probability and our measure of structure, the Least Common Denominator (LCD). Section~\ref{s:smallLCDdecomp} and Section~\ref{s:smallLCD} are devoted to the proof of Proposition~\ref{p:smallLCDallLevels}, which shows it is unlikely an eigenvector of $M_n$ has an LCD lying in a given level set. This proposition is the main technical achievement of this section. Finally, we derive Proposition~\ref{p:eigvectors} as a straightforward consequence of Proposition~\ref{p:smallLCDallLevels} and a union bound, which excludes the possibility of structured eigenvectors altogether. Together with Proposition~\ref{prop:eigvecnotcomp}, Proposition~\ref{p:eigvectors} will allow us to complete the outline of Section~\ref{sec:proofstrategy} and prove our main theorems in the next section.

\subsection{Small Ball Probability}\label{s:sbp}
Recall from the proof sketch in Section \ref{sec:proofstrategy} that we wish to bound the probability that the inner product of an eigenvector and a random vector is small.  This motivates the definition of L\'evy concentration, which bounds the small ball probabilities of a random vector $Z$.
 
\begin{definition}
	The \emph{L\'evy concentration} of a random vector $Z \in \mathbb{R}^n$ is defined to be
	$$
	\mathcal{L}(Z, \eps) = \sup_{u \in \mathbb{R}^n} \P(\|Z- u\|_2 \leq \eps).
	$$
\end{definition}
When $X$ is a random vector and $v$ is a fixed vector, the structure of $v$ will greatly influence the L\'evy concentration of the random variable $v \cdot X$. To formalize this concept, we begin with a measure of arithmetic structure for a  unit vector.  

\begin{definition}[{\cite[Definition 6.1]{vershynin2014symmetric}}]\label{def:LCD}
Let $p$ be as in Theorem~\ref{thm:main}. We define the least common denominator (LCD) of $x \in \S^{n-1}$ as
$$
D(x) = \inf\left\{\theta > 0 : \operatorname{dist}(\theta x, \Z^n) < \left( {\frac{\log_{+} (\sqrt{\gamma p}\theta)}{\gamma p}  } \right)^{1/2}\right\},
$$
where $\gamma$ is an appropriate constant that is defined in Remark \ref{r:gamma} below.
\end{definition}

\begin{remark}\label{r:gamma}
	There exist constants $\gamma, \bar{\eps}_0 \in (0,1)$ such that for any ${\eps \leq \bar{\eps}_0}$, $$\mathcal{L}(\xi \chi, \eps) \leq 1-\gamma p,$$ where $\chi$ is a Bernoulli random variable such that $\P(\chi = 1) = p$ and $\xi$ is a subgaussian random variable with unit variance.  We fix such a $\gamma$ in Definition \ref{def:LCD}.
\end{remark}

\begin{proposition}[{\cite[Proposition 4.2]{basak2017sparse}}] \label{prop:smallballprobability}
Let $X \in \mathbb{R}^n$ be a random vector with i.i.d.\ coordinates of the form $\xi_j \chi_j$, where the $\chi_j$'s are Bernoulli random variables with $\P(\chi_j = 1) = p$ and the $\xi_j$'s are random variables with unit variance and finite fourth moment.  Then for any $v \in \S^{n-1}$, 
	$$
	\mathcal{L} \left( X \cdot v, \sqrt{p} \eps \right) \leq C_{\ref{prop:smallballprobability}} \left( \eps + \frac{1}{\sqrt{p} D(v)} \right),
	$$
	where $C_{\ref{prop:smallballprobability}}$ depends only on the fourth moment of $\xi$.
\end{proposition}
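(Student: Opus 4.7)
The plan is to use Esseen's concentration inequality together with the arithmetic structure captured by the LCD, following the template of Halász. First, Esseen's inequality converts the problem into bounding an integral of the characteristic function:
$$\mathcal{L}(X\cdot v,\sqrt{p}\,\epsilon) \leq C\sqrt{p}\,\epsilon \int_{-1/(\sqrt{p}\,\epsilon)}^{1/(\sqrt{p}\,\epsilon)} |\phi(t)|\,dt,$$
where by independence of the coordinates, $\phi(t) = \E e^{itX\cdot v} = \prod_{j=1}^n \phi_j(tv_j)$ with $\phi_j(s) = (1-p) + p\,\E e^{is\xi_j}$.

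Next, I would bound each factor of the product. Using the symmetrization identity $|\phi_j(s)|^2 = \E\cos\bigl(s(\xi_j\chi_j - \xi_j'\chi_j')\bigr)$ with an independent copy, combined with the anti-concentration of $\xi\chi$ provided by Remark~\ref{r:gamma} and the elementary bound $1-\cos(2\pi y) \gtrsim \dist(y,\Z)^2$, one obtains
$$|\phi_j(tv_j)|^2 \leq \exp\!\bigl(-c\,p\,\min(1,\dist(tv_j/(2\pi),\Z)^2)\bigr).$$
The prefactor $p$ arises precisely because coordinates only contribute to anti-concentration when the Bernoulli mask is active. Taking a product over $j$ yields
$$|\phi(t)|^2 \leq \exp\!\bigl(-c\,p\,\dist^2(tv/(2\pi),\Z^n)\bigr),$$
valid on the regime where individual distances $\dist(tv_j/(2\pi),\Z)$ are at most a constant.

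In the third step, I split the integral at $|t| = D(v)$ and use the defining property of the LCD. For $|t|<D(v)$, the definition of $D(v)$ forces $\dist^2(tv,\Z^n) \geq (\gamma p)^{-1}\log_+(\sqrt{\gamma p}\,|t|)$, which translates into polynomial decay $|\phi(t)|\leq (\sqrt{\gamma p}\,|t|)^{-c'}$ with $c'$ as large as desired by tuning earlier constants. Integrating and multiplying by the $\sqrt{p}\,\epsilon$ prefactor produces a contribution of order $\epsilon$. For $|t|\in(D(v),(\sqrt{p}\,\epsilon)^{-1}]$, which is nonempty precisely when $D(v)<(\sqrt{p}\,\epsilon)^{-1}$, a smoothing or covering argument on intervals of length $D(v)^{-1}$ produces the complementary contribution of order $(\sqrt{p}\,D(v))^{-1}$. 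Summing the two pieces yields the stated bound.

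The main obstacle is calibrating the constants in step two so that the exponent $c'$ in step three exceeds $1$, which is needed for the integrand to be absolutely integrable in the relevant range. This forces a careful symmetrization to reduce to the case of symmetric $\xi_j$ (so that $\E e^{is\xi_j}$ is real and the cosine expansion is clean), along with a small-$|t|$ truncation that handles the logarithmic regime in the LCD where $\sqrt{\gamma p}|t|\le 1$. Producing the $(\sqrt{p}D(v))^{-1}$ contribution from the outer region is a secondary delicate point: although the pointwise decay of $|\phi|$ is lost beyond $D(v)$, the integral of $|\phi|$ over the remaining range can be controlled via a change of variables or by exploiting that $|\phi|$ has bounded local averages on intervals of length $D(v)^{-1}$.
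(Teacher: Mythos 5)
The paper does not actually prove this proposition; it is imported wholesale from Basak--Rudelson (cited as \cite[Proposition 4.2]{basak2017sparse}), so what you have written is an independent reconstruction rather than a comparison target. Your template---Esseen's inequality, tensorized characteristic-function decay in terms of $\dist(t v,\Z^n)$ with a factor of $p$ from the Bernoulli mask, and a split using the definition of $D(v)$---is indeed the strategy of the cited proof, and the calibration concern you flag (needing the exponent to exceed $1$ so the integral converges) is exactly why $\gamma$ is chosen via the L\'evy concentration estimate of Remark~\ref{r:gamma} rather than being a free parameter.

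However, there is a genuine gap in your Step~3. The entire ``outer region'' $|t|\in(D(v),(\sqrt{p}\,\eps)^{-1}]$ should not appear at all: by monotonicity of the L\'evy concentration function $\eps\mapsto\LL(Z,\eps)$, one may assume without loss of generality that $\eps\ge (\sqrt{p}\,D(v))^{-1}$ (otherwise prove the bound at the larger radius, which is at least as strong), and then the Esseen integral is supported in $|t|\le (\sqrt{p}\,\eps)^{-1}\le D(v)$, where the LCD definition applies throughout. Your proposed alternative---``bounded local averages of $|\phi|$ on intervals of length $D(v)^{-1}$''---is not a standard fact and you give no mechanism for it; the trivial bound $|\phi|\le 1$ on that region produces an $O(1)$ contribution, so the outer region genuinely cannot be handled pointwise. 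A secondary issue is in Step~2: writing $|\phi_j(tv_j)|^2\le\exp(-cp\,\dist^2(tv_j/(2\pi),\Z))$ elides the fact that what naturally appears is $\E\,\dist^2(tv_j\bar\xi_j/(2\pi),\Z)$ with $\bar\xi_j=\xi_j-\xi_j'$; passing to a bound in terms of $tv_j$ alone requires a Paley--Zygmund-type truncation of $\bar\xi_j$ to an interval like $[c,C]$ (using the unit variance and bounded fourth moment), not merely the symmetrization you mention, and this is also where the normalization in the LCD definition has to be matched to the truncation window.
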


We may tensorize Proposition \ref{prop:smallballprobability} to obtain a bound on the L\'evy concentration of $M_n x$. The argument is almost identical to the proof of \cite[Proposition 4.3]{basak2017sparse}, and we note only the necessary modifications here. Recall the notation $x_{[m:m']}$ from Definition~\ref{d:vectorsubscript}. For any index set $J \subset [n]$, we extend this notation to $x_J$ in the canonical way.

\begin{proposition}[Small ball probabilities of $M_n x$ via regularized LCD] \label{prop:smallballprob}
There exists a constant $C_{\ref{prop:smallballprob}}$ such that for any $\alpha, \eps > 0$ and index set $I$ of size $\lceil \alpha n \rceil$, 
$$
  \LL(M_n x, \eps \|v_I\|_2 \sqrt{p n}) \leq C_{\ref{prop:smallballprob}}^{n- \lceil \alpha n \rceil} \left( \eps + \frac{1}{\sqrt p D(v_I/\|v_I\|_2)} \right)^{n- \lceil \alpha n \rceil}.
$$
\end{proposition}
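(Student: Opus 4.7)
The plan is to tensorize the one-dimensional small ball bound of Proposition~\ref{prop:smallballprobability}, closely mirroring the proof of \cite[Proposition 4.3]{basak2017sparse}. The one point requiring care is the symmetry of $M_n$, which introduces dependence between the coordinates of $M_n x$; this will be handled via conditioning.

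First I would fix the index set $I$ of size $\lceil \alpha n \rceil$ and set $J = [n] \setminus I$. For $k \in J$, decompose
$$(M_n x)_k = Y_k + R_k, \qquad Y_k = \sum_{j \in I} m_{kj} x_j, \qquad R_k = \sum_{j \in J} m_{kj} x_j.$$
The key observation is that the entries $\{m_{kj} : k \in J,\, j \in I\}$ are mutually independent and independent of the $J \times J$ block $\mathcal{F} := \sigma(\{m_{kj} : k,j \in J\})$, since every pair $(k,j)$ in the first collection is off-diagonal and lies strictly outside the symmetric $J \times J$ portion. Conditioning on $\mathcal{F}$ turns each $R_k$ into a constant and makes $(Y_k)_{k \in J}$ conditionally independent. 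Because projection onto the $J$-coordinates can only decrease Lévy concentration and translation by the $R_k$ preserves it, it suffices to control $\mathcal{L}((Y_k)_{k \in J}, \cdot \mid \mathcal{F})$ uniformly in $\mathcal{F}$.

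Next I would apply the one-dimensional estimate. Write $\tilde x = x_I / \|x_I\|_2$, viewed as a vector in $\mathbb{S}^{n-1}$ supported on $I$ (so its LCD is unchanged). Each $Y_k$ equals $\|x_I\|_2 \langle (m_{kj})_{j \in I}, \tilde x|_I \rangle$, and Proposition~\ref{prop:smallballprobability} gives
$$\mathcal{L}\left(Y_k,\; \sqrt{p}\,\eta\,\|x_I\|_2\right) \le C_{\ref{prop:smallballprobability}}\left(\eta + \frac{1}{\sqrt{p}\, D(\tilde x)}\right)$$
for every $\eta > 0$. The standard tensorization lemma for Lévy concentration of independent random variables (e.g.\ \cite[Lemma 2.6]{rudelson2008littlewood}), applied to the conditionally independent family $(Y_k)_{k \in J}$, then yields
$$\mathcal{L}\left((Y_k)_{k \in J},\; \sqrt{p}\,\eta\,\|x_I\|_2\,\sqrt{|J|} \mid \mathcal{F}\right) \le \left(C\left(\eta + \frac{1}{\sqrt{p}\, D(\tilde x)}\right)\right)^{|J|}$$
almost surely in $\mathcal{F}$, for an absolute constant $C$.

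Since the proposition is invoked only when $\alpha$ is bounded away from $1$, we have $|J| \asymp n$, so $\sqrt{p|J|}$ and $\sqrt{pn}$ agree up to a bounded multiplicative factor; after a harmless rescaling of $\eta$ this factor is absorbed into the final constant $C_{\ref{prop:smallballprob}}$. The only subtlety in the argument, and the reason the exponent is $|J| = n - \lceil \alpha n \rceil$ rather than $n$, is the need to condition on the $J \times J$ block before tensorizing; without this step the coordinates of $M_n x$ are not independent and the tensorization lemma does not apply. I do not anticipate any deeper obstacle.
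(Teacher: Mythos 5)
Your proof is correct and takes essentially the same route as the paper's sketch: project onto the $J = [n]\setminus I$ coordinates, condition so that only the $J \times I$ block of $M_n$ remains random, note that these entries are mutually independent (being distinct off-diagonal entries), and then tensorize Proposition~\ref{prop:smallballprobability} over the $|J| = n - \lceil \alpha n \rceil$ conditionally independent rows, exactly as in \cite[Proposition 4.3]{basak2017sparse}. One small slip in wording: projection onto the $J$-coordinates can only \emph{increase} (not decrease) L\'evy concentration, since dropping coordinates enlarges the small-ball event; this is precisely the direction needed, and the conclusion you draw from it is the right one.
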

\begin{proof}[Proof Sketch] We first observe that conditioning on elements of $M_n$ never decreases (and may increase)  $\LL(M_n x, \eps \|v_I\|_2 \sqrt{p n})$. We therefore condition on all elements not in columns indexed by elements of $I$, and also condition the elements whose indices $(i,j)$ satisfy $i,j \in I$. The remaining elements are i.i.d.\ and consist of $n - \lceil \alpha n \rceil$ rows. The remainder of the argument is nearly identical to the one leading to \cite[Proposition 4.3]{basak2017sparse}, where an analogous statement was shown for non-symmetric matrices.
\end{proof}

The following lemma provides a lower bound for the LCD in terms of the $\ell^\infty$ norm.
\begin{proposition}[Lemma 6.2, \cite{vershynin2014symmetric}] \label{prop:LCDlowerbound}
For all $x \in \S^{n-1}$,
$$
D(x) \geq \frac{1}{2 \|x\|_\infty}.
$$
\end{proposition}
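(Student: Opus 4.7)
The strategy is direct: given the infimum definition of $D(x)$, it suffices to show that every $\theta \in \bigl(0, \tfrac{1}{2\|x\|_\infty}\bigr)$ fails the defining inequality, i.e.\ satisfies $\operatorname{dist}(\theta x, \Z^n) \geq \bigl(\log_+(\sqrt{\gamma p}\,\theta)/(\gamma p)\bigr)^{1/2}$. This will force $D(x) \geq \tfrac{1}{2\|x\|_\infty}$.

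My first step is to exploit the choice of scale. For any such $\theta$, every coordinate satisfies $|\theta x_i| \leq \theta \|x\|_\infty < 1/2$, so the nearest integer to $\theta x_i$ is $0$ for each $i$. Consequently
\[
\operatorname{dist}(\theta x, \Z^n)^2 = \sum_{i=1}^n (\theta x_i)^2 = \theta^2 \|x\|_2^2 = \theta^2,
\]
using $\|x\|_2 = 1$. So the distance to $\Z^n$ along the ray is exactly $\theta$.

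It remains to compare this with the threshold. Writing $u = \sqrt{\gamma p}\,\theta \geq 0$, the needed inequality $\theta^2 \geq \log_+(\sqrt{\gamma p}\,\theta)/(\gamma p)$ becomes simply $u^2 \geq \log_+(u)$. This is elementary: for $u \leq 1$ the right side is $0$, and for $u > 1$ the function $f(u) = u^2 - \log u$ has derivative $2u - 1/u > 0$ on $u > 1/\sqrt{2}$, with $f(1) = 1 > 0$. Hence $u^2 \geq \log_+ u$ for all $u > 0$.

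Combining these two observations, every $\theta < \tfrac{1}{2\|x\|_\infty}$ fails the defining condition of $D(x)$, so such $\theta$ is not in the set over which we take the infimum, yielding $D(x) \geq \tfrac{1}{2\|x\|_\infty}$ as claimed. There is no real obstacle here; the only point requiring a moment's thought is verifying the elementary inequality $u^2 \geq \log_+ u$, and the rest is just unwinding the definitions of $\operatorname{dist}(\cdot,\Z^n)$ and $\|\cdot\|_\infty$.
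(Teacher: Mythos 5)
Your proof is correct and is essentially the standard argument from the cited source (Vershynin's Lemma 6.2): for $\theta < 1/(2\|x\|_\infty)$ every coordinate of $\theta x$ rounds to $0$, so $\operatorname{dist}(\theta x,\Z^n)=\theta$, and then the elementary inequality $u^2 \ge \log_+ u$ (with $u=\sqrt{\gamma p}\,\theta$) shows no such $\theta$ lies in the defining set, whence the infimum is at least $1/(2\|x\|_\infty)$. The paper does not reproduce the proof because it simply cites it, but your argument is the one the cited reference gives, correctly adapted to the $\gamma p$-weighted form of the LCD used here.
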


As in \cite{vershynin2014symmetric}, we define a regularized version of the LCD. However, our definition is slightly different than the one in \cite{vershynin2014symmetric}. Recall the notation $\Incomp(m ,\delta)$ given after Definition~\ref{d:sparse}, and observe that the set $I_0$ in the following definition takes a distinguished role and is not included in the maximum. Here, $k_0$ represents a parameter that will be fixed later, in the material preceding \eqref{e:kbounds}.

\begin{definition}[Regularized LCD]					\label{def reg LCD}
  Let $\{I_j\}_{j=0}^{k_0}$ be any partition of $[n]$ with $k_0$ elements..
  We define the {\em regularized LCD} of a vector $v \in \Incomp(m ,\delta)$ as
  $$
  \Dhat(v) =  \Dhat(I, v)  = \max_{1 \leq j \leq k_0}  D \big(x_{I_j}/\|x_{I_j}\|_2\big).
  $$\end{definition}

In our use of Definition~\ref{def reg LCD} below, $I_0$ will be (approximately) the $m$ largest coordinates of $v$. Hence $\Dhat(v)$ gives a measure of the structure of the elements of $v$ left over after approximating $v$ by an $m$-sparse vector.  %We will use this definition for a specific partition $I_j$ dependent on the vector $v$ but suppress this dependence in the notation.

\subsection{Decomposition of Incompressible Vectors}\label{s:smallLCDdecomp}
In this section, we define a way to decompose incompressible vectors, which is used in the proof of Proposition~\ref{prop:smallLCD} below. In order to give this decomposition, we first introduction a classification of the incompressible vectors, which allows us to control the amount of mass that is not in the $m$ largest coordinates.  
\begin{definition}
For $\rho \leq \rho_1 \leq \rho_2 \le 1$ and $c <1$, define
\begin{multline*}
\Incomp_{\rho, c}(m, \rho_1, \rho_2) = \\ \left \{v \in \mathbb{S}^{n-1} \cap \left(\Comp(m, \rho) \cup \Dom(m, c) \right)^c \colon \rho_1 \leq \|v_{[m+1:n]}\|_2 < \rho_2 \right \}.
\end{multline*}
\end{definition}  
\begin{remark}
By definition, $\| v \|_2 \le \rho $ for any $v \in \Comp(m, \rho)$, which gives rise to the condition $\rho \le \rho_1$ in the preceding definition.
\end{remark}
We will consider the sets of incompressible vectors $\Incomp_{\rho , c'_{\ref{prop:compressible}}}(m, 2^{j-1} \rho, 2^j \rho)$ for $j \in \mathbb N$, where $m$ is a parameter that will be chosen later. For brevity, we introduce the shorthand
$$ \IncompTwo =\Incomp_{\rho , c'_{\ref{prop:compressible}}}(m, 2^{j-1} \rho, 2^j \rho).$$
For the remainder of this section we primarily use the fact that the vectors in $\Incomp(m, 2^{j-1} \rho, 2^j \rho)$ are not dominated. That they are not compressible is used only in the proof of Proposition~\ref{p:smallLCDallLevels}.

We begin with a straightforward upper bound. Recall $\rho$ was defined in Proposition~\ref{prop:compressible}. Fix $j \in \mathbb Z$ and consider a vector ${v \in \Incomp(m, 2^{j-1} \rho, 2^j \rho)}$.  Since $v \notin \Dom(m , c'_{\ref{prop:compressible}})$, 
$$
\|v_{[m+1:n]} \|_2 >  c'_{\ref{prop:compressible}} \sqrt{m} \|v_{[m+1:n]}\|_\infty. 
$$
Furthermore, since $\|v_{[m+1:n]}\|_2 < 2^{j} \rho$ by definition,
\begin{equation}\label{e:infinitybound}
  c'^{-1}_{\ref{prop:compressible}} \frac{2^j \rho}{\sqrt{m}}  > \|v_{[m+1:n]}\|_\infty.
\end{equation}
On the other hand, we can also find a large set of coordinates that are uniformly lower-bounded.
\begin{lemma} \label{l:largeset}
For $v \in \IncompTwo$, the set $$
\sigma(v) = \left \{i \in [n]: |v_i| \geq  \frac{2^{j-1} \rho}{2 \sqrt{n}} \text{ and } i \in  \pi^{-1}_v\left([m+1:n]\right) \right\}
$$ 
satisfies $|\sigma(v)| \geq (c'_{\ref{prop:compressible}} )^2 m/ 8 $.
\end{lemma}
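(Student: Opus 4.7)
The plan is a Chebyshev/Markov style counting argument combining the two bounds just established for vectors in $\IncompTwo$: a lower bound on the tail $\ell_2$ mass coming from the definition of the class, and an upper bound on the tail $\ell_\infty$ norm coming from non-domination (i.e.\ inequality (\ref{e:infinitybound})).

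First, I would set $S = \pi_v^{-1}([m+1:n])$, so that $\|v_S\|_2 = \|v_{[m+1:n]}\|_2 \ge 2^{j-1}\rho$ by the definition of $\IncompTwo$, and let $T = \{i \in S : |v_i| < 2^{j-1}\rho / (2\sqrt{n})\}$ be the ``too small'' indices. Note that $\sigma(v) = S \setminus T$. Since $|T| \le n$, a direct estimate gives
$$ \sum_{i \in T} |v_i|^2 \;<\; |T| \cdot \frac{(2^{j-1}\rho)^2}{4n} \;\le\; \frac{(2^{j-1}\rho)^2}{4}, $$
so the surviving mass satisfies
$$ \sum_{i \in \sigma(v)} |v_i|^2 \;\ge\; \|v_S\|_2^2 - \sum_{i \in T} |v_i|^2 \;\ge\; (2^{j-1}\rho)^2 - \frac{(2^{j-1}\rho)^2}{4} \;=\; \frac{3(2^{j-1}\rho)^2}{4}. $$

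Next, I would plug in the $\ell_\infty$ bound (\ref{e:infinitybound}). Since every $i \in \sigma(v) \subset S$ has
$$|v_i| \le \|v_{[m+1:n]}\|_\infty < (c'_{\ref{prop:compressible}})^{-1}\frac{2^j \rho}{\sqrt{m}} = 2(c'_{\ref{prop:compressible}})^{-1} \frac{2^{j-1}\rho}{\sqrt{m}},$$
we obtain
$$ \frac{3(2^{j-1}\rho)^2}{4} \;\le\; \sum_{i \in \sigma(v)} |v_i|^2 \;\le\; |\sigma(v)| \cdot 4 (c'_{\ref{prop:compressible}})^{-2} \frac{(2^{j-1}\rho)^2}{m}. $$
Rearranging yields $|\sigma(v)| \ge \tfrac{3}{16}(c'_{\ref{prop:compressible}})^2 m$, which is stronger than the claimed lower bound $(c'_{\ref{prop:compressible}})^2 m / 8$.

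There is no real obstacle here; the lemma is essentially a packaging of the two quantitative bounds already isolated in the preceding paragraphs. The only thing to keep in mind is that the lower bound $\|v_{[m+1:n]}\|_2 \ge 2^{j-1}\rho$ is used only via the definition of the class $\IncompTwo$, while the upper bound on $\|v_{[m+1:n]}\|_\infty$ is precisely where the non-domination hypothesis enters — consistent with the remark preceding the lemma that non-domination is the structural property actually being exploited.
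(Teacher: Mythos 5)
Your proof is correct and uses exactly the same two ingredients as the paper — the $\ell_2$ lower bound $\|v_{[m+1:n]}\|_2 \ge 2^{j-1}\rho$ from the class definition and the $\ell_\infty$ upper bound \eqref{e:infinitybound} from non-domination — differing only in that you argue directly while the paper phrases it as a contradiction. As a minor bonus, your direct bookkeeping yields the slightly sharper constant $\tfrac{3}{16}(c'_{\ref{prop:compressible}})^2 m$ in place of the paper's $\tfrac{1}{8}(c'_{\ref{prop:compressible}})^2 m$.
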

\begin{proof}
For the sake of contradiction, assume that $|\sigma(v)| < (c'_{\ref{prop:compressible}})^2 m/8$.  Then by \eqref{e:infinitybound},
$$
\|v_{[m+1:n]}\|_2 \le  \sqrt{\|v_{[m+1:n]}\|_\infty^2 |\sigma(v)| + n \frac{2^{2(j-1)} \rho^2}{4n} }  < 2^{j-1} \rho,
$$ 
contradicting the definition of $\IncompTwo$.
\end{proof}

We now define a partitioning procedure.  For this, we introduce some new notation.
\begin{definition}
For a set $I \in [n]$ with $|I| \geq k_2 > k_1 $, we use $I_{\langle k_1: k_2\rangle}$ to denote all the elements from the $k_1$-th to the $k_2$-th in $I$ (inclusive), where we order the elements from least to greatest.  For example, if $I = \{2, 4, 5, 6, 9\}$ then $I_{\langle 2:4 \rangle} = \{4, 5, 6\}$.  
\end{definition}

 Let $v\in \mathbb{S}^{n-1}$ be a vector, let $\omega = \omega(n) $ be a parameter satisfying
 $$ n^{-1/7} \le \omega\le \frac{1}{\log n},$$
and set $m= \omega n$. We define $k_0$ as the largest number of disjoint subsets with $\lceil \omega n \rceil$ elements one can have of $[n]$ whose union does not contain the indices of the $m$ largest elements of $v$.
We  consider disjoint index sets $I_1, \dots, I_{k_0}$, each of size $\lceil \omega n \rceil$, each not containing any indices of the $m$ largest elements of $v$. Therefore,
\begin{equation}\label{e:kbounds}
\frac{1}{2\omega} \leq \left\lfloor \frac{n-m }{\lceil \omega n \rceil} \right\rfloor = k_0 \le \frac{1}{\omega}.
\end{equation}
%where the first inequality follows from the assumption that $m \leq c_{\ref{prop:compressible}}n \leq n/3$. (Recall we assumed that $m$ satisfies condition \eqref{e:m} for the rest of the paper.)

In our definition, the index sets $I_j$ depend on $v$, but we suppress this dependence in the notation.  For a vector $v\in \mathbb{S}^{n-1}$, let $\tau(v)$ denote the set of indices of the $m$ largest coordinates.  By Lemma \ref{l:largeset}, we can choose a subset $\widehat \sigma (v) \subset \sigma(v)$ of size exactly $\lceil (c'_{\ref{prop:compressible}} )^2 m/8 \rceil$, where $\sigma(v)$ was defined in the statement of that lemma. We observe that $\widehat \sigma (v)$ and $\tau(v)$ are disjoint.

Let $\overline{\sigma}(v) = [n] \setminus (\tau(v) \cup \widehat \sigma(v)).$ For $1 \leq k < k_0$, we define
$$r' =\left \lceil \frac{(c'_{\ref{prop:compressible}} )^2 m}{8 } \right \rceil ,\quad r = \left\lfloor  \frac{r'}{k_0} \right\rfloor ,\quad s = \lceil \omega n \rceil -r,$$
\begin{equation}\label{e:partition}
I_k = \widehat \sigma(v)_{\left \langle 1 + (k-1) r :  k  r \right \rangle} \cup \overline{\sigma}(v)_{\left \langle  1 + (k-1) s:  k s \right \rangle}.
\end{equation}
\begin{comment}
Observe that when $k=k_0$ the values in the brackets in the subscripts for $\widehat \sigma$ may exceed the size of the set. In this case we make a special exception. We define $I_{k_0}$ as above, but take all remaining elements of the set $\widehat \sigma (v)$, then take additional elements from $\overline{\sigma}$ (following the ordering, starting with the smallest element) so that $I_{k_0}$ has $\lceil \omega n \rceil$ total elements. 
\end{comment}
For the rest of this work, we drop floor and ceiling functions because they do not influence the argument in a substantial way.

Finally, we define $I_0 = [n] \setminus \cup_{k=1}^{k_0} I_k$.  In words, $I_0$ contains the $m$ largest coordinates and the smaller coordinates left over from divisibility issues.  In particular, $|I_0| \leq m + \lceil \omega n \rceil$.  Since the sets $I_k$ were chosen to be disjoint for $k\ge 1$, it follows that $\{ I_k\}_{k=0}^{k_0}$ is a partition of $[n]$. %It is important that we generate these sets $I_k$ according to the respective orderings of $\tau$, $\sigma$, and the remaining coordinates, as then the partition is fixed entirely by the choice of $\tau$ and $\sigma$.  

The primary objective of this partition is recorded in the following lemma, where we also define the constants $\rho'_j$. %We recall $m = \omega n$.

\begin{lemma}\label{ikbounds}
For $v \in \IncompTwo$ and $1 \leq k \leq k_0$,
\begin{equation} \label{eq:l2bound}
\rho'_j :=  c'_{\ref{prop:compressible}} 2^{j-3} \rho \omega \leq \|v_{I_k}\|_2 \leq c'^{-1}_{\ref{prop:compressible}} 2^{j} \rho 
= 2^3 (c'_{\ref{prop:compressible}})^{-2} \rho'_j  \omega^{-1}.
\end{equation}
Also,
$$
\widehat{D}(v ) \geq (c'_{\ref{prop:compressible}} )^2  2^{-5} n^{1/2} \omega^{3/2}.
$$
\end{lemma}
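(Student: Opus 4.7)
The plan is to prove the lemma by direct calculation, establishing the three inequalities in turn using the construction of the partition $\{I_k\}$ given in \eqref{e:partition} together with the defining properties of the sets $\sigma(v)$, $\widehat\sigma(v)$, and $\tau(v)$. Nothing beyond Proposition~\ref{prop:LCDlowerbound}, Lemma~\ref{l:largeset}, and the bound \eqref{e:infinitybound} will be needed.

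First I would handle the upper bound on $\|v_{I_k}\|_2$. By construction, $I_k \subseteq \widehat\sigma(v) \cup \overline\sigma(v) = [n] \setminus \tau(v)$, so every index in $I_k$ has rank at least $m+1$ under the ordering $\pi_v$. Therefore
$$
\|v_{I_k}\|_2 \;\leq\; \|v_{[m+1:n]}\|_2 \;<\; 2^j \rho \;\leq\; (c'_{\ref{prop:compressible}})^{-1} 2^j \rho,
$$
where the last step uses $c'_{\ref{prop:compressible}} < 1$; the final equality in \eqref{eq:l2bound} follows by rearrangement.

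Next I would establish the lower bound on $\|v_{I_k}\|_2$. By construction $|I_k \cap \widehat\sigma(v)| = r$, and by the definition of $\sigma(v)$ each index $i \in \widehat\sigma(v) \subseteq \sigma(v)$ satisfies $|v_i| \geq 2^{j-1}\rho/(2\sqrt n)$. Hence
$$
\|v_{I_k}\|_2^2 \;\geq\; r \cdot \frac{(2^{j-1}\rho)^2}{4n}.
$$
Then I would lower-bound $r = \lfloor r'/k_0 \rfloor$ using $r' \geq (c'_{\ref{prop:compressible}})^2 \omega n/8$ and $k_0 \leq 1/\omega$ from \eqref{e:kbounds}, which yields $r \geq (c'_{\ref{prop:compressible}})^2 \omega^2 n / 8 - 1$. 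Since the hypothesis $\omega \geq n^{-1/7}$ forces $\omega^2 n \geq n^{5/7} \to \infty$, the $-1$ is negligible for large $n$, and taking square roots gives the claimed bound $\|v_{I_k}\|_2 \geq c'_{\ref{prop:compressible}} 2^{j-3}\rho\omega$.

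Finally, for the lower bound on $\widehat D(v)$, I would apply Proposition~\ref{prop:LCDlowerbound} to the unit vector $v_{I_k}/\|v_{I_k}\|_2$, which gives
$$
D\bigl(v_{I_k}/\|v_{I_k}\|_2\bigr) \;\geq\; \frac{\|v_{I_k}\|_2}{2\|v_{I_k}\|_\infty}.
$$
Combining the just-established lower bound on $\|v_{I_k}\|_2$ with the upper bound $\|v_{I_k}\|_\infty \leq \|v_{[m+1:n]}\|_\infty \leq (c'_{\ref{prop:compressible}})^{-1} 2^j \rho/\sqrt m$ from \eqref{e:infinitybound}, and substituting $m = \omega n$, yields the required bound. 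Since $\widehat D(v) \geq D(v_{I_k}/\|v_{I_k}\|_2)$ for any single $k$, this suffices. The main difficulty here is not conceptual but rather bookkeeping of the numerical constants: the stated values $c'_{\ref{prop:compressible}} 2^{j-3}$ and $(c'_{\ref{prop:compressible}})^2 2^{-5}$ are essentially tight (up to factors of $\sqrt 2$) for the argument above, and the hypothesis $\omega \geq n^{-1/7}$ is exactly what is needed to absorb the lower-order error terms arising from the floor and ceiling operations in the definition of the partition.
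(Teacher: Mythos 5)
Your proof is correct and follows essentially the same route as the paper. The only difference is cosmetic: for the upper bound you invoke $\|v_{I_k}\|_2 \le \|v_{[m+1:n]}\|_2 < 2^j\rho$ directly from the inclusion $I_k \subseteq [n]\setminus\tau(v)$ and the definition of the incompressibility class, whereas the paper uses $\|v_{I_k}\|_2 \le \sqrt{|I_k|}\,\|v_{[m+1:n]}\|_\infty$ together with \eqref{e:infinitybound}; both give the same bound, and your version is marginally cleaner. Your lower bound (counting the $r$ indices of $I_k$ drawn from $\widehat\sigma(v)$, each of magnitude at least $2^{j-1}\rho/2\sqrt n$) and your treatment of the regularized LCD via Proposition~\ref{prop:LCDlowerbound} match the paper's argument exactly, and your observation about the constants being tight up to $\sqrt 2$ and the role of the floors/ceilings is consistent with the paper's explicit disclaimer that it drops such discrepancies.
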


\begin{proof}
The bounds on $\|v_{I_k}\|_2$ follow from the coordinate-wise bounds of our construction. For the lower bound, we ignore all elements not in $\widehat \sigma(v)$. We obtain
$$
\sqrt{\frac{|\widehat \sigma(v)|}{k_0}}  \frac{2^{j-1} \rho}{2 \sqrt{n}}  \leq \|v_{I_k}\|_2 \leq \sqrt{|I_k|} \|v_{[m+1:n]}\|_\infty.
$$
The claim \eqref{eq:l2bound} then follows from Lemma~\ref{l:largeset}, \eqref{e:infinitybound}, and \eqref{e:kbounds}.

For the second claim, applying Proposition~\ref{prop:LCDlowerbound} and recalling Definition~\ref{def reg LCD} yields
\begin{equation}\label{e:Dlower}
\widehat{D}(v ) \geq \min_{k \ge 1} \left\{ \frac{\|v_{I_k}\|_2}{2 \|v_{I_k}\|_\infty}\right\}.
\end{equation}
Then the claim follows from the lower bound on $\|v_{I_k}\|_2$ in the previous paragraph and \eqref{e:infinitybound}.
\end{proof}

\subsection{Vectors with Small LCD}\label{s:smallLCD}
We now exclude vectors with small regularized LCD as potential eigenvectors of $M_n$. 
This is the content of the next proposition, Proposition~\ref{prop:smallLCD}, which shows that any vector in $\IncompTwo$ with small regularized LCD is unlikely to be near an eigenvector. We first define level sets of vectors according to their regularized LCD.
\begin{definition}
For any $L> 0$, we define the level sets
$$\label{e:SL}
S_L = \{v \in \Incomp(m, \rho): L \leq \widehat{D}(v) < 2L\}.
$$
\end{definition}

We also require a preliminary lemma. 
Recall $\gamma$ was defined in Remark~\ref{r:gamma}.

\begin{lemma}[Lemma 6.13, \cite{luh2018sparse}]\label{thenets}
Let $\omega >0$, and let $f(n)$ be a function such that $${\lim_{n \rightarrow \infty} f(n) = \infty}.$$ Then for $L > f(n)$, the set of unit vectors
$$
\{v \in \S^{\omega n -1}: f(n) \leq D(v) \leq L\}
$$
admits a $\beta$-net of size at most
$$
\left(12 + \frac{\bar{c} L}{\sqrt{\omega n}} \right)^{\omega n} \log(L)
$$
where $\bar{c}>0$ is a universal constant and 
$$
\beta = \frac{2 \sqrt{\log(2 \sqrt{\gamma p} L)}}{L \sqrt{\gamma p}}.
$$
\end{lemma}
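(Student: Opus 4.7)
The plan is to build the net by combining the LCD condition of Definition \ref{def:LCD} with a standard volumetric count of integer lattice points in a Euclidean ball. For each $v$ in the set, Definition \ref{def:LCD} provides a scale $\theta_v \in [f(n), L]$, which we may take to be $D(v)$ up to an arbitrarily small perturbation, together with an integer vector $q_v \in \mathbb{Z}^{\omega n}$ satisfying $\|\theta_v v - q_v\|_2 < \sqrt{\log_+(\sqrt{\gamma p}\theta_v)/(\gamma p)}$. In particular, $v$ is close to the rational vector $q_v/\theta_v$, and $\|q_v\|_2 \le \theta_v + \sqrt{\log_+(\sqrt{\gamma p}\theta_v)/(\gamma p)}$.

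Next, I would discretize the scale dyadically: partition $[f(n), L]$ into $O(\log L)$ subintervals of the form $[2^k, 2^{k+1})$, and at each scale build a local net consisting of the rational points $q/2^{k+1}$ with $q \in \mathbb{Z}^{\omega n}$ and $\|q\|_2 \le C \cdot 2^{k+1}$. The standard volumetric packing bound, obtained by assigning each lattice point a unit cube, shows that the number of integer points in a ball of radius $R$ in $\mathbb{R}^{\omega n}$ is at most $(C + CR/\sqrt{\omega n})^{\omega n}$. Taking a union over the $O(\log L)$ scales and dominating by the largest yields the claimed cardinality $\log(L)\cdot(12 + \bar c L/\sqrt{\omega n})^{\omega n}$. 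To verify the $\beta$-approximation for $v$ with $\theta_v \in [2^k, 2^{k+1})$, I would decompose the distance from $v$ to its candidate net point $q_v/2^{k+1}$ into the LCD error $\sqrt{\log_+(\sqrt{\gamma p}\theta_v)/(\gamma p)}/\theta_v$ (from replacing $v$ by $q_v/\theta_v$) plus the rounding error $\|q_v\|_2 \cdot |\theta_v^{-1} - 2^{-(k+1)}|$ (from replacing $\theta_v$ by $2^{k+1}$). Using $\theta_v \le 2^{k+1} \le 2\theta_v$ together with the bound on $\|q_v\|_2$, both contributions are controlled by a constant multiple of $\sqrt{\log_+(\sqrt{\gamma p}\theta_v)/(\gamma p)}/\theta_v$, and in turn by $\beta = 2\sqrt{\log(2\sqrt{\gamma p}L)/(\gamma p)}/L$.

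The main technical subtlety lies in obtaining a single uniform $\beta$ across all scales $\theta_v \in [f(n), L]$, since the function $\sqrt{\log_+(\sqrt{\gamma p}\theta)/(\gamma p)}/\theta$ is not monotone in $\theta$ and naive approximation at small $\theta_v$ can exceed the value attained at $\theta = L$. The explicit constants $2$ in $\beta$ and $12$ and $\bar c$ in the cardinality bound exist precisely to absorb this dyadic slack, and they constrain how one must choose the rounding direction and ball radii at each scale. Once this bookkeeping is in place, the remainder of the argument is a routine union bound over the dyadic scales, together with the elementary observation that $\lceil \log_2(L/f(n))\rceil \le \log(L)$ for $f(n) \ge 1$.
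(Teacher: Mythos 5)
There is a concrete gap in the step where you verify the $\beta$-approximation. You propose to round $\theta_v \in [2^k, 2^{k+1})$ up to the fixed denominator $2^{k+1}$ and use $q_v/2^{k+1}$ as the candidate net point. The associated rounding error is
$$
\|q_v\|_2 \cdot \left| \frac{1}{\theta_v} - \frac{1}{2^{k+1}} \right| = \frac{\|q_v\|_2}{\theta_v} \cdot \frac{2^{k+1} - \theta_v}{2^{k+1}}.
$$
Since $\|q_v\|_2 = \theta_v + O\bigl(\sqrt{\log_+(\sqrt{\gamma p}\,\theta_v)/(\gamma p)}\bigr) \approx \theta_v$, and since $2^{k+1} - \theta_v$ is of the same order as $\theta_v$ when $\theta_v$ is near $2^k$, this rounding error is bounded below by a universal constant (roughly $1/2$). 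It is therefore vastly larger than $\beta$, not ``controlled by a constant multiple of $\sqrt{\log_+(\sqrt{\gamma p}\,\theta_v)/(\gamma p)}/\theta_v$'' as you assert. The only way to force this term down to $O(\beta)$ would be to replace the dyadic scales with a multiplicative mesh of ratio $1 + O(\beta)$, which would give roughly $\log(L)/\beta$ scales rather than $\log L$, destroying the claimed cardinality bound.

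The resolution is to drop the fixed denominator $2^{k+1}$ entirely and instead normalize by the Euclidean norm of the lattice point: take the candidate net point to be $q_v/\|q_v\|_2$. Then the error from changing the denominator from $\theta_v$ to $\|q_v\|_2$ is
$$
\|q_v\|_2 \cdot \left| \frac{1}{\theta_v} - \frac{1}{\|q_v\|_2} \right| = \frac{\bigl|\,\|q_v\|_2 - \theta_v\,\bigr|}{\theta_v} \le \frac{\|q_v - \theta_v v\|_2}{\theta_v} < \frac{1}{\theta_v}\sqrt{\frac{\log_+(\sqrt{\gamma p}\,\theta_v)}{\gamma p}},
$$
by the reverse triangle inequality applied to $\|q_v\|_2 = \|q_v\|_2$ and $\theta_v = \|\theta_v v\|_2$. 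This is the same order as the LCD error, and the two contributions together account for the factor of $2$ in $\beta$. The dyadic decomposition of $[f(n), L]$ into $O(\log L)$ annuli in $\|q\|_2$ (not in $\theta$) then produces the $\log(L)$ factor and the lattice-point count gives the volumetric bound, as you intended. Separately, your remark that the LCD error at small $\theta_v$ ``can exceed the value attained at $\theta = L$'' is correct and cannot be dismissed as ``dyadic slack'' absorbed by constants; since $\sqrt{\log_+(\sqrt{\gamma p}\,\theta)/(\gamma p)}/\theta$ is decreasing for $\theta$ large, this is a genuine issue requiring further refinement of the coarse nets at small scales, not just a constant adjustment.
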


We now state and prove the main technical result of this section. Recall $S_L$ was defined in \eqref{e:SL}, $\rho'_j$ was defined in Lemma~\ref{ikbounds}, and $K$ is the constant given by Lemma~\ref{l:opnorm}.
\begin{proposition} \label{prop:smallLCD}
Fix $\nu>0$. There exist constants $C_{\ref{prop:smallLCD}}, c_{\ref{prop:smallLCD}}, c'_{\ref{prop:smallLCD}}, \tilde{c}_{\ref{prop:smallLCD}} >0$ such that for $p \geq C_{\ref{prop:smallLCD}} \frac{\log^{7 + \nu} n}{n}$, $\lambda \in [-K \sqrt{pn}, K \sqrt{pn}]$, $j \in \N$, and for any
$$
(np)^{-1/(7 + \nu)} \leq \alpha \leq \frac{c'_{\ref{prop:smallLCD}}}{\log n}
$$
and
$$
\tilde{c}_{\ref{prop:smallLCD}} \alpha^{3/2} n^{1/2} \leq L \leq p^{-1/2} \exp(\alpha^{-1}),
$$ 
the following holds for $n \ge (c'_{\ref{prop:smallLCD}})^{-1}$:
$$
\P\Big(\exists v \in \Incompalphan \cap S_L \text{ s.t. } \|(M_n -\lambda)v \|_2 \leq c_{\ref{prop:smallLCD}}\eps_0 \rho'_j \sqrt{pn}  \Big) \leq \exp(-c'_{\ref{prop:smallLCD}}n),
$$
where 
$$
\eps_0 (L) = \min\left\{\frac{c'_{\ref{prop:smallLCD}} \sqrt{\alpha n}}{L}, \frac{c'_{\ref{prop:smallLCD}} \sqrt{\log r} (\log \log n)}{ \alpha^{2} r}\right\}
\text{ and } 
r= \frac{ \tilde{c}_{\ref{prop:smallLCD}}}{2}  \alpha^{3/2} (np)^{1/2}.
$$
\end{proposition}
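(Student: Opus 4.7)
The proof follows the standard small ball $+$ $\eps$-net paradigm pioneered by Rudelson--Vershynin, adapted to the partition-based regularized LCD of Definition~\ref{def reg LCD}. First I would establish a small ball bound for a fixed vector. Given $v\in \Incompalphan \cap S_L$, the hypothesis $\widehat D(v)\ge L$ furnishes an index $k_{*}=k_{*}(v)\in[1,k_0]$ with $D(v_{I_{k_{*}}}/\|v_{I_{k_{*}}}\|_2)\ge L$. Applying Proposition~\ref{prop:smallballprob} with $I=I_{k_{*}}$ (of size $\lceil\alpha n\rceil$, taking $\omega=\alpha$), together with the lower bound $\|v_{I_{k_{*}}}\|_2\ge\rho'_j$ from Lemma~\ref{ikbounds}, would yield
$$
\mathcal L\bigl(M_n v,\ \tilde\eps\,\rho'_j\sqrt{pn}\bigr)\ \le\ C^{(1-\alpha)n}\Bigl(\tilde\eps+\tfrac{1}{\sqrt p\,L}\Bigr)^{(1-\alpha)n}
\quad\text{for any }\tilde\eps>0.
$$

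Next I would construct an $\eps$-net for $\Incompalphan\cap S_L$ respecting the partition $\{I_k\}_{k=0}^{k_0}$. Since the partition is determined by the ordering of the $|v_i|$, paying a crude combinatorial factor of $\binom{n}{|I_0|,\ldots,|I_{k_0}|}\cdot k_0\le e^{O(n)}$ allows us to fix the partition and the index $k_{*}$. For each block $I_k$ with $k\ge 1$, the inequality $\widehat D(v)<2L$ forces $D(v_{I_k}/\|v_{I_k}\|_2)<2L$, so Lemma~\ref{thenets} provides a $\beta$-net of cardinality $(12+\bar cL/\sqrt{\alpha n})^{\alpha n}\log L$ at scale $\beta\sim\sqrt{\log(\sqrt p\,L)}/(L\sqrt p)$. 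The heavy block $I_0$, with at most $2\alpha n$ coordinates, is covered by a standard volumetric $\beta'$-net of size $(3/\beta')^{2\alpha n}$. Taking products over the $k_0+1\le 1/\alpha+1$ blocks produces a net $\NN$ of cardinality
$$
|\NN|\ \le\ \exp\!\Bigl(O(n)+O\bigl(n\log(L/\sqrt{\alpha n})\bigr)+O\bigl(\alpha n\log(1/\beta')\bigr)\Bigr),
$$
such that every $v$ has some $v'\in\NN$ with $\|v-v'\|_2\lesssim\sqrt{k_0}\,\beta+\beta'$.

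On the high-probability event $\{\|M_n\|\le K\sqrt{pn}\}$ of Lemma~\ref{l:opnorm}, a routine approximation argument transfers the bad event from $v$ to a representative $v'\in\NN$, at the cost of doubling $\tilde\eps$, provided $K\sqrt{pn}\,(\sqrt{k_0}\,\beta+\beta')\lesssim\tilde\eps\,\rho'_j\sqrt{pn}$. A union bound over $\NN$ then reduces the proposition to the inequality
$$
|\NN|\cdot C^{(1-\alpha)n}\Bigl(\tilde\eps+\tfrac{1}{\sqrt p\,L}\Bigr)^{(1-\alpha)n}\ \le\ \exp(-c'\,n),
$$
where $\tilde\eps$ is ultimately set equal to the $\eps_0$ in the statement.

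The main obstacle is the delicate balancing that forces the two-term minimum in the definition of $\eps_0$. When $\sqrt p\,L$ is moderate (comparable to $r$), the small ball factor is governed by $1/(\sqrt p\,L)$ while the net count grows like $(L/\sqrt{\alpha n})^{n}$; the choice $\tilde\eps\sim\sqrt{\alpha n}/L$ equates the two exponential rates, and the constraint $\alpha\le c'_{\ref{prop:smallLCD}}/\log n$ buys just enough slack to absorb the $\log L$ and $\log(1/\beta')$ corrections alongside the $e^{O(n)}$ prefactor. When $\sqrt p\,L$ is large, the ceiling $1/(\sqrt p\,L)\le 1/r$ (with $r=\tilde c_{\ref{prop:smallLCD}}\alpha^{3/2}(np)^{1/2}/2$), coming from the a priori bound $\widehat D(v)\ge\tilde c_{\ref{prop:smallLCD}}\alpha^{3/2}\sqrt n$ in Lemma~\ref{ikbounds}, takes over; here the second form of $\eps_0$ (with its $\log\log n$ factor) emerges from optimizing the volumetric $I_0$ contribution with $\beta'\sim\tilde\eps\rho'_j/(K\sqrt{pn})$. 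The lower bound $\alpha\ge(np)^{-1/(7+\nu)}$ is precisely what forces both regimes to yield a common gain of $\exp(-c'n)$; carefully propagating the $\alpha$ dependence through $\rho'_j$ and through the $k_0\le 1/\alpha$ factor is the most technically fiddly step.
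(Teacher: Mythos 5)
Your overall strategy matches the paper's: construct a partition-adapted net using Lemma~\ref{thenets} for the blocks $I_k$ ($k\ge 1$) and a volumetric net for $I_0$, apply the small ball estimate of Proposition~\ref{prop:smallballprob} to the distinguished block with $D(v_{I_{k_*}}/\|v_{I_{k_*}}\|_2)\ge L$, union bound, and then transfer back to all of $\mathcal K$ via the operator norm bound. You also correctly identify that the lower bound on each block's LCD (from the construction in Lemma~\ref{ikbounds}) is what makes Lemma~\ref{thenets} applicable and what determines the floor $r$. However, there are two genuine gaps.

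First, your combinatorial factor is wrong. You claim $\binom{n}{|I_0|,\ldots,|I_{k_0}|}\cdot k_0 \le e^{O(n)}$, but a multinomial coefficient over $k_0+1 \approx 1/\alpha$ blocks of size $\approx \alpha n$ each scales like $\exp\bigl(n\log(1/\alpha)\bigr)$. Since $\alpha$ is permitted to be as small as $(np)^{-1/(7+\nu)}$, this can be $\exp\bigl(\Theta(n\log n)\bigr)$, which would overwhelm the small-ball gain and destroy the argument. The paper's Step 2 avoids this: the entire partition $\{I_k\}_{k\ge 0}$ is determined by just the two small index sets $\tau(v)$ (the $m$ largest coordinates) and $\widehat\sigma(v)$ (a subset of size $\approx m/4$) via the explicit formula \eqref{e:partition}, so the combinatorial factor is only $\binom{n}{m}\binom{n}{m/4} = \exp\bigl(O(\alpha n \log(1/\alpha))\bigr)$, and $\alpha\log(1/\alpha) = O(c'_{\ref{prop:smallLCD}})$ can be made small. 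This reduction from a full multinomial to choosing two small index sets is essential, not a cosmetic detail.

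Second, your net construction omits the net of scaling factors. Lemma~\ref{thenets} provides nets for the \emph{normalized} blocks $v_{I_k}/\|v_{I_k}\|_2$, not for $v_{I_k}$ itself. To reconstruct $v$ from the normalized directions you must also discretize the norms $\|v_{I_k}\|_2$; the paper does this with a separate net $\mathcal J$ of scaling coefficients $t_k$ for each block. Without it, your claimed approximation bound $\|v-v'\|_2\lesssim\sqrt{k_0}\beta+\beta'$ is unjustified (and the form is also off: the paper's Step 4 bound sums over $k_0$ terms, each scaled by $\|w_{I_k}\|_2 \lesssim \rho'_j/\alpha$ from \eqref{eq:l2bound}, plus a separate error from the $\mathcal J$-net). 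The additional factor $|\mathcal J|^{k_0}$ must also be carried through the cardinality bound of Step 2, though it is harmless there.
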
  

\begin{proof}
%For the given values of $p$, observe this choice of $m$ falls in the range of Proposition~\ref{prop:compressible}. 

We set $m= \alpha n$, and define
$$\mathcal K = \IncompTwo \cap S_L.$$  
In outline, this proof implements the following steps:
\begin{enumerate}
\item Construct a suitable net $\mathcal M$ for $\mathcal K$.
\item Upper bound the size of $\mathcal M$.
\item Show the claim holds for all $v \in \mathcal M$. 
\item Extend the result from all $v\in \mathcal M$ to all  $v\in\mathcal K$.
\end{enumerate}

For Step 1, let $v\in \mathcal K$ be a vector and consider the partition $\{ I_k \}_{k=0}^{k_0}$ of the coordinates of $v$ constructed in \eqref{e:partition} with the parameter $\omega = \alpha$.  For the coordinates $I_0$, by a standard volume estimate,\footnote{See for example \cite[(5.7)]{pisier1999volume}.} there exists a $c'_{\ref{prop:smallLCD}}\rho'_j \eps_0/10K$-net, $\mathcal{N}_0$, of the values $[0,1]$ such that
$$
|\mathcal{N}_0| \leq \left(\frac{30 K}{c'_{\ref{prop:smallLCD}}\eps_0 \rho'_j} \right)^{m + \alpha n},
$$
where we recall $|I_0| \le m + \alpha n$.

For the coordinates in $I_k$ with $k \ge 1$, we use a construction that exploits the LCD structure. Observe that the hypothesis of Lemma~\ref{thenets} holds for $v_{I_k}/\| v_{I_k} \|_2$ because 
\begin{equation}\label{e:Llower}
{D(v_{I_k}/\| v_{I_k} \|_2) \ge (c'_{\ref{prop:compressible}} )^2 \frac{m}{2^5}\sqrt{\frac{\alpha }{n}} } =\frac{c'_{\ref{prop:compressible}}}{2^5} \alpha^{3/2} n^{1/2} ,\end{equation}
 as shown in the proof of Lemma~\ref{ikbounds} (see \eqref{e:Dlower}), and the lower bound tends to infinity as $n \rightarrow \infty$. For $I_k$ with $k\ge 1$, let $\mathcal{N}_k$ denote the $\beta$-net guaranteed by Lemma~\ref{thenets} applied to $v_{I_k}/\| v_{I_k} \|_2$.\footnote{Observe we are applying this lemma when the upper limit is $2L$, according to the definition of $S_L$, not $L$. The definition of $\beta$ is adjusted accordingly below.}

We next implement a net of scaling factors.  Let $\mathcal{J}$ be a $c'_{\ref{prop:smallLCD}}\eps_0 \rho'_j/ 10 K k_0$-net of $[0, 1]$ such that
$$
|\mathcal{J}| \leq \frac{30 K k_0}{c'_{\ref{prop:smallLCD}} \eps_0 \rho'_j}.
$$
As observed earlier, the partition $\{I_k\}_{k\ge 0}$ of the coordinates of $v$ is entirely determined by the sets of indices $\tau$ and $\sigma$.  To approximate all $v \in \mathcal K$, we define the preliminary set
\begin{equation}\label{e:choosetau}
\mathcal{M}' = \bigcup_{\tau, \sigma \in [n]: |\tau| = m, |\sigma|= m/4} \left\{x_0 + \sum_{k=1}^{k_0} t_k y_k: x_0  \in \mathcal{N}_0 , y_k \in \mathcal{N}_{k}, t_k \in \mathcal{J} \right\}.
\end{equation}
\begin{comment}
For every point in $\mathcal{M}$, we can replace it with the nearest point in $S_L$.  This adjustment will change the distance between any vector and $\mathcal M$ by at most a factor of 2, and clearly the net remains the same size.  We abuse notation and call this adjusted net $\mathcal{M}$.
\end{comment}
We currently have no guarantee that $${\mathcal{M}' \subset  \IncompTwo \cap S_L}.$$ 
However, this is easily fixed. 
If there exists $x\in S_L$ such that $$\| x - m \|_2 \le \frac{c_{\ref{prop:smallLCD}} \rho'_j \eps_0}{15K},$$ we replace $m$ by any such $x$. Otherwise, we discard $m$. This creates a new net $\mathcal M$ such that $|\mathcal M | \le |\mathcal M '|$. This completes Step 1.

We now enter Step 2 of the proof and upper bound the size of $\mathcal M$.  
We may combinatorially determine the size of $\mathcal M$ using the sizes of the $\mathcal N_k$ and $\mathcal J$. This leads to the following bound on the cardinality of our net:
\begin{align}\label{e:theprod}
|\mathcal{M}| &\leq \binom{n}{m} \binom{n}{m/4} \left(\frac{30 K}{c'_{\ref{prop:smallLCD}}\eps_0 \rho'} \right)^{m + \alpha n} \prod_{k=1}^{k_0} \left[ \left(12 + \frac{ \bar{c} 2L}{\sqrt{\alpha n}} \right)^{\alpha n} \log(L) \frac{30 K k_0}{c'_{\ref{prop:smallLCD}} \eps_0 \rho'_j} \right].
\end{align}
The combinatorial factors come from the choices of $\tau$ and $\sigma$ in \eqref{e:choosetau}.

We now proceed to simplify this bound. From the elementary bound $${\binom{n}{k} \le \exp(k\log(en/k))}$$ we have the following exponential bound for $|\mathcal M|$:
\begin{multline*}
|\mathcal{M}| \leq \exp \Big( 2m \log(4en/m)  + (m + \alpha n + k_0) \log(30K/c'_{\ref{prop:smallLCD}}\eps_0 \rho'_j) 
\\ + k_0 \log(\log(L)) + k_0 \log(k_0) \Big) \times \left(12 + \frac{\bar{c}2 L}{\sqrt{\alpha n}} \right)^{n-m}.
\end{multline*}
For the second factor, we recalled that $| I_0 | \ge m$, so that the product from $1$ to $k_0$ in \eqref{e:theprod} has at most $n-m$ individual terms.
 Using ${L \le \exp (2 \alpha^{-1})}$, ${m=\alpha n}$, $k_0 \le \alpha n$, and $k_0 \le \alpha^{-1}$ (from \eqref{e:kbounds}), we find
\begin{multline*}%\label{e:mbound}
|\mathcal{M}| \leq  \exp \left( n \left[ 2 \alpha  \log(4e/\alpha ) + 3 \alpha \log(30K/c'_{\ref{prop:smallLCD}}\eps_0\rho'_j)  
+ \frac{1}{n\alpha} \log(2 /\alpha^2) \right] \right)\\ \times  \left(12 + \frac{\bar{c} 2 L}{\sqrt{\alpha n}} \right)^{n-m}.
\end{multline*}

\noindent Recall that $\rho'_j$ was defined in terms of $\rho$ in Lemma~\ref{ikbounds}, and $\log(1/\rho) = O(\log n/ \log \log n)$ by Remark~\ref{r:rhoremark}. Note also that $\log(1/\alpha) = O(\log n)$. Then there exists $C>0$ such that
$$
2 \alpha  \log(4e/\alpha) \le C, \qquad 3 \alpha \log(30K/c'_{\ref{prop:smallLCD}}\rho'_j) \le C, \qquad \frac{1}{n\alpha} \log(2 /\alpha^2) \le C.
$$
From this, we find
\begin{equation}\label{e:mbound11}
|\mathcal{M}| \leq  \exp \left( n \left[ C  + 3\alpha  \log(1/\eps_0)  \right] \right) \times  \left(12 + \frac{\bar{c} 2 L}{\sqrt{\alpha n}} \right)^{n-\alpha n}.
\end{equation}
This completes Step 2.

We now begin Step 3 of the outline and prove the result for all the points in our net $\mathcal M$. Set $$
P = \P\left(\exists x \in \mathcal{M} \text{ s.t. } \|(M_n - \lambda)x\|_2 \leq {c_{\ref{prop:smallLCD}}\eps_0 \rho'_j \sqrt{pn}}\right).
$$
By Proposition \ref{prop:smallballprob} applied with $\eps = c_{\ref{prop:smallLCD}}\eps_0$, for any $v \in \mathcal M$ and $k$ such that $1\le k \le k_0$,
$$
\P\left( \| (M_n - \lambda) v \|_2 \le  c_{\ref{prop:smallLCD}}\eps_0 \rho'_j \sqrt{pn} \right) \le C_{\ref{prop:smallballprob}}^{n- \lceil \alpha n \rceil} \left( c_{\ref{prop:smallLCD}}\eps_0 + \frac{1}{\sqrt p D(v_{I_k}/\|v_{I_k}\|_2)} \right)^{n- \lceil \alpha n \rceil},
$$
where we recall from Lemma~\ref{ikbounds} that $\rho'_j \le \| v_{I_k} \|_2$. Since $v\in S_L$, by the definition of $S_L$ we find there exists $1\le k \le k_0 $ such that $ D(v_{I_k}/\|v_{I_k}\|_2) > L$. We use this $k$ in the above expression to find 
$$
\P\left( \| (M_n - \lambda) v \|_2 \le  c_{\ref{prop:smallLCD}}\eps_0 \rho'_j \sqrt{pn} \right) \le C_{\ref{prop:smallballprob}}^{n- \lceil \alpha n \rceil} \left( c_{\ref{prop:smallLCD}}\eps_0 + \frac{1}{\sqrt p L } \right)^{n- \lceil \alpha n \rceil}.
$$

Straightforward computations show
\begin{equation}\label{e:reqns}
\frac{c'_{\ref{prop:smallLCD}} \sqrt{\alpha n}}{L} \geq \frac{2C_{\ref{prop:smallballprob}}}{\sqrt{p}L}\quad\text{and}\quad \frac{c'_{\ref{prop:smallLCD}} \sqrt{\log r} (\log \log n)}{ \alpha^{2} r} \geq \frac{2C_{\ref{prop:smallballprob}}}{\sqrt{p} L}.
\end{equation}
Recall that $\eps_0$ as defined as the minimum of the two upper bounds in \eqref{e:reqns}, so
\begin{equation}\label{e:epszerolower}
\eps_0 \ge \frac{2 C_{\ref{prop:smallballprob}}}{\sqrt{p} L}.
\end{equation}
Then
$$
\P\left( \| (M_n - \lambda) x \|_2 \le  c_{\ref{prop:smallLCD}}\eps_0 \rho'_j \sqrt{pn} \right) \le C_{\ref{prop:smallballprob}}^{n- \lceil \alpha n \rceil} \left( c_{\ref{prop:smallLCD}}\eps_0 + (2 C_{\ref{prop:smallballprob}})^{-1} \eps_0 \right)^{n- \lceil \alpha n \rceil}.
$$
 Setting $c_{\ref{prop:smallLCD}} = (2C_{\ref{prop:smallballprob}})^{-1}$ and applying a union bound over all elements $x\in \mathcal M$, we obtain
\begin{equation}\label{e:p}
P \leq |\mathcal{M}| \eps_0^{n - \alpha n}.
\end{equation}

% This calculation shows 
%$$
% \frac{k_0}{n} \log(k_0/\alpha) \le C,
%$$
%so
%\begin{equation}\label{e:mbound}
%|\mathcal{M}| \leq  \exp \left( n \left[ C + \frac{ 2 \alpha^\nu n+ k_0}{n}  \log(1/\eps_0)  
% \right] \right) \times    \left(12 + \frac{\bar{c} 2 L}{\sqrt{\alpha^\nu n}} \right)^{n-m}.
%\end{equation}

To bound $ |\mathcal{M}| \eps_0^{n - \alpha n}$ from \eqref{e:p}, we use \eqref{e:mbound11} and divide into two cases.  First, suppose $\frac{2\bar{c} L}{\sqrt{\alpha n}} \leq 1$.  
By \eqref{e:mbound11}, we have
\begin{equation}
|\mathcal{M}| \leq  \exp \left( n \left[ C + 3\alpha  \log(1/\eps_0)  
 \right] \right) \times    13^{n}.
\end{equation}
Combining this with \eqref{e:p} and absorbing the $13^{n}$ into the exponential yields 
$$
P \le \exp \left( n \left[ C + 3 \alpha  \log(1/\eps_0)  
 \right] \right) \times   \eps_0^{n - \alpha n},
$$
so
$$
P \le \exp \left( n \left[ C + 3\alpha \log(1/\eps_0)  - (1 - \alpha)\log(1/\eps_0)  
 \right] \right) \le  \exp(- c''_{\ref{prop:smallLCD}}n).
$$

In the last line we used $\alpha = o(1)$ and $\eps_0 \rightarrow 0$ (the latter is by direct calculation), so $\log(1/\eps_0) \rightarrow \infty$ and the term inside the brackets tends to $-\infty$.

For the case $\frac{2 \bar{c} L}{\sqrt{\alpha n}} > 1$, recalling the definition of $\eps_0$ and that $m = \alpha n$ gives 
\begin{equation}
P \le \exp \left( n \left[ C + 3 \alpha  \log(1/\eps_0)  
 \right] \right) \times   \left( \frac{13 \bar{c} L}{\sqrt{\alpha n}} \right)^{n-\alpha n} \left(\frac{c'_{\ref{prop:smallLCD}} \sqrt{\alpha n}}{L} \right)^{n - \alpha n},
\end{equation}
\begin{equation}\label{e:takesmall}
P \le \exp \left( n \left[ C + 3 \alpha  \log(1/\eps_0)  
 \right] \right) \times   \left( 13 \bar{c}c'_{\ref{prop:smallLCD}} \right)^{n-\alpha n} .
\end{equation}
Now \eqref{e:epszerolower} shows that 
$$
\frac{1}{\eps_0} \le \frac{\sqrt{p} L }{ 2 C_{\ref{prop:smallballprob}}}\le \frac{\exp(\alpha^{-1}) }{ 2 C_{\ref{prop:smallballprob}}}. 
$$
This, along with the stipulated range of $\alpha$, implies that 
$$
3 \alpha  \log(1/\eps_0) < C.
$$
Therefore, taking $c'_{\ref{prop:smallLCD}}$ small enough in \eqref{e:takesmall}, we have
$$ 
P \le \exp(- c''_{\ref{prop:smallLCD}}n).
$$
This completes Step 3.

We now proceed to Step 4. Having shown the result for all the points in the net, we now extend to the entire level set $\mathcal K$.  Again, we divide into cases.

We assume first that 
\begin{equation}\label{e:e0first}
\frac{c'_{\ref{prop:smallLCD}} \sqrt{\alpha n}}{L} \leq \frac{c'_{\ref{prop:smallLCD}} \sqrt{\log r} (\log \log n)}{ \alpha^2 r},\quad \text{so that}\quad\eps_0 = \frac{c'_{\ref{prop:smallLCD}} \sqrt{\alpha n}}{L}.
\end{equation}
  For any $w \in \mathcal K$, let $m \in \mathcal{M}$ be the closest element of the net $\mathcal M$. Then, by the definition of $\mathcal M$,
\begin{align*}
\|w - m\|_2 &\leq \frac{c_{\ref{prop:smallLCD}}\rho'_j \eps_0}{10K} + \sum_{k=1}^{k_0} \left(\Big \| w_{I_k} - \|w_{I_k}\|_2 y_k \Big \|_2 + \Big \| \|w_{I_k}\|_2 y_k - t_k y_k\Big\|_2 \right) \\
&\leq \frac{c_{\ref{prop:smallLCD}}\rho'_j \eps_0}{10K} + \sum_{k=1}^{k_0} \left( \left\| \frac{w_{I_k}}{\|w_{I_k}\|_2} - y_k \right \|_2 \|w_{I_k}\|_2 +  \Big \| \|w_{I_k}\|_2 y_k - t_k y_k\Big\|_2 \right) \\
&\leq \frac{c_{\ref{prop:smallLCD}}\rho'_j \eps_0}{10K} +  k_0 \rho'_j \alpha^{-1} (c'_{\ref{prop:compressible}})^{-2} 2^3 \beta + k_0 \frac{c'_{\ref{prop:smallLCD}}  \rho'_j\eps_0}{10 K k_0}  \\
&\leq \frac{c_{\ref{prop:smallLCD}}\rho'_j \eps_0}{5K} +   
C_\gamma  ( c'_{\ref{prop:compressible}})^{-2}   \frac{\rho'_j}{\alpha^{5/2} L \sqrt{p}} \\
&\leq \frac{c_{\ref{prop:smallLCD}}\rho'_j \eps_0}{2K}.
\end{align*}
\begin{comment}
In the third line we used \eqref{eq:l2bound}, and in the fifth we used the definition of $\eps_0$. In the last line we used the upper bound on $L$, $k_0 \le \alpha^{-1}$, and the fact that 
\begin{equation*}L \leq \frac{1}{\sqrt{p}} \exp( \alpha^{-1}) \leq \frac{1}{4 \sqrt{\gamma p}}\exp\left(\frac{ (c'_{\ref{prop:smallLCD}})^2\gamma \alpha^5 p n}{2^{10} 5^2 K^2} \right)\end{equation*}
for a large enough $C_{\ref{prop:smallLCD}}$ (which has the effect of increasing the lower bound on $p$). The first inequality is by our definition of $L$ and the second follows from a simple comparison using our assumption that $\alpha \geq (pn)^{-1/6}$. 
\end{comment}
In the third inequality, we used that there are $k_0$ terms in the sum, that the $y_k$ form a $\beta$-net, and the upper bound on $\| w_k \|_2$ from \eqref{eq:l2bound}.
In the fourth inequality, we used $k_0 \le \alpha^{-1}$ from \eqref{e:kbounds} and the inequality
\begin{equation}\label{e:betabound}
 \beta \le \frac{C_\gamma \alpha^{-1/2} }{L \sqrt{p} },
\end{equation}
where $C_\gamma$ is a constant that depends only on $\gamma$. The inequality \eqref{e:betabound} follows from the definition of $\beta$ and the hypothesized upper bound $\log(\sqrt{p}L ) \le \alpha^{-1}$ on $L$. The last inequality follows by direct calculation using the value of $\eps_0$ given in \eqref{e:e0first} and the assumed lower bound on $\alpha$.

For the other case, suppose
\begin{equation}\label{e:otherecase}
\frac{c'_{\ref{prop:smallLCD}} \sqrt{\alpha n}}{L} \ge \frac{c'_{\ref{prop:smallLCD}} \sqrt{\log r}(\log \log n)}{ \alpha^2 r}  ,\quad \text{so that}\quad\eps_0 = \frac{c'_{\ref{prop:smallLCD}} \sqrt{\log r}(\log \log n)}{ \alpha^2 r} .
\end{equation}
For any $w \in \mathcal K$, let $m \in \mathcal{M}$ be the closest element of the net $\mathcal M$. Then, by the definition of $\mathcal M$,
\begin{align*}
\|w - m\|_2 &\leq \frac{c_{\ref{prop:smallLCD}} \rho'_j \eps_0}{10K} + \sum_{k=1}^{k_0} \left(\Big \| w_{I_k} - \|w_{I_k}\|_2 y_k \Big \|_2 + \Big \| \|w_{I_k}\|_2 y_k - t_k y_k\Big\|_2 \right) \\
&\leq \frac{c_{\ref{prop:smallLCD}} \rho'_j \eps_0}{10K} + \sum_{k=1}^{k_0} \left( \left\| \frac{w_{I_k}}{\|w_{I_k}\|_2} - y_k \right \|_2 \|w_{I_k}\|_2 +  \Big \| \|w_{I_k}\|_2 y_k - t_k y_k\Big\|_2 \right) \\
&\leq \frac{c_{\ref{prop:smallLCD}} \rho'_j \eps_0}{10K} +  k_0 \rho'_j \alpha^{-1} (c'_{\ref{prop:compressible}})^{-2} 2^3 \beta  + k _0 \frac{c_{\ref{prop:smallLCD}} \rho'_j\eps_0 }{10 K k_0}  \\
&\leq \frac{c_{\ref{prop:smallLCD}} \rho'_j \eps_0}{5K} + k_0 \rho'_j \alpha^{-1} (c'_{\ref{prop:compressible}})^{-2}   2^3 \frac{ \sqrt{\log(4 \sqrt{\gamma p} L)}}{\sqrt{\gamma p}L }  \\
&\leq \frac{c_{\ref{prop:smallLCD}} \rho'_j \eps_0}{5K} +  \rho'_j \alpha^{-2} (c'_{\ref{prop:compressible}})^{-2} 2^4 \frac{ \sqrt{\log(4 \sqrt{\gamma} r)}}{ \sqrt{\gamma }r }    \\
&\leq \frac{c_{\ref{prop:smallLCD}} \rho'_j \eps_0}{5K} +   \left(  (c'_{\ref{prop:compressible}})^{-2} (c'_{\ref{prop:smallLCD}})^{-1} \gamma^{-1/2}2^5  \right)  \rho'_j \eps_0  (\log \log n)^{-1} \\
&\leq \frac{c_{\ref{prop:smallLCD}} \rho'_j \eps_0}{2K}. 
\end{align*}
In the third line, we used that there are $k_0$ terms in the sum, that the $y_k$'s form a $\beta$-net, and the upper bound on $\| w_k \|_2$ from \eqref{eq:l2bound}. The fourth line follows from the definition of $\beta$.
The fifth line is a result of the observation that $\sqrt{\log x}/x$ is a decreasing function for large $x$, $r \rightarrow \infty$, and  $r < L\sqrt{p}$.  We also used the bound $k_0 \le \alpha^{-1}$ from \eqref{e:kbounds}.
In the sixth line, we used the definition of $\eps_0$ in \eqref{e:otherecase}.
For the the last line, we used $ (\log \log n )^{-1}= o(1)$ and took $n$ large enough.

Therefore, if $\|(M_n - \lambda) w\|_2 \geq 2{c_{\ref{prop:smallLCD}}}\eps_0 \sqrt{pn}$, then using Lemma~\ref{l:opnorm},
$$
\|(M_n - \lambda) m\|_2 \geq 2 c_{\ref{prop:smallLCD}}\eps_0  \sqrt{pn} - \|M_n - \lambda\| \frac{c_{\ref{prop:smallLCD}} \eps_0}{2K} \geq c_{\ref{prop:smallLCD}} \eps_0  \sqrt{pn},
$$
with exponentially small error probability, which contradicts the conclusion of Step 3 above. After adjusting $c_{\ref{prop:smallLCD}}$ by a factor of $2$, this completes the proof.
\end{proof}
\begin{remark}
As noted in Remark~\ref{r:optimal}, the optimal result should permit $p$ as small as $C \log(n) /n$.  The restriction that $p \ge C \log^{7 + \nu} n /n$ in the above proof comes from the requirement that $\eps_0 \rightarrow 0$. \end{remark}

We now extend the previous result to all vectors with small LCD.
\begin{proposition} \label{p:smallLCDallLevels} 
Fix $\nu >0$. There exists a constant $c_{\ref{p:smallLCDallLevels}} >0$ such that for $p \geq C_{\ref{prop:smallLCD}} \frac{\log^{7 + \nu} n}{n}$, $\lambda \in [-K \sqrt{pn}, K \sqrt{pn}]$, $j \in \N$ and for any
$$
(np)^{-1/(7 + \nu)} \leq \alpha \leq \frac{c'_{\ref{prop:smallLCD}}}{\log n}
$$
the following holds. The probability that there exists $v \in \Incomp(\alpha n , \rho)$ such that 
$$
 \| (M_n -\lambda)v\|_2 \leq c_{\ref{prop:smallLCD}}\eps_1 \rho'_1 \sqrt{pn} \text{ and } \widehat D(v) \leq p^{-1/2} \exp(\alpha^{-1})
$$
is at most $\exp(-c_{\ref{p:smallLCDallLevels}} n)$ for $n \ge (c_{\ref{p:smallLCDallLevels}} )^{-1}$, where 
$$
\eps_1 =  \exp ( -  c_{\ref{p:smallLCDallLevels}} n^{1/7} ) .
$$
\end{proposition}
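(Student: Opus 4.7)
The plan is to deduce Proposition~\ref{p:smallLCDallLevels} from Proposition~\ref{prop:smallLCD} via a double union bound: one over dyadic ``tail-mass'' levels $j$ that partition $\Incomp(\alpha n, \rho)$ according to $\|v_{[m+1:n]}\|_2$, and one over dyadic levels $S_{L_k}$ of the regularized LCD. Each such level has already been controlled by Proposition~\ref{prop:smallLCD} with exponentially small error, while the total number of levels will be merely sub-exponential, so the union bound survives.

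First, I would argue that any $v$ satisfying the hypothesis lies outside $\Dom(\alpha n, c'_{\ref{prop:compressible}})$ on a high-probability event. Indeed, the threshold $c_{\ref{prop:smallLCD}}\eps_1 \rho'_1 \sqrt{pn}$ is much smaller than the scale $c_{\ref{prop:compressible}} \rho\sqrt{pn}$ that Proposition~\ref{prop:compressible} excludes for compressible or dominated $v$ at the fixed $\lambda$, at cost only $\exp(-c pn)$. Hence $v \in \IncompTwo$ for some integer $j$; the relevant range is $1 \le j \le J_{\max} = O(\log n / \log \log n)$ from $\|v_{[m+1:n]}\|_2 \le 1$ together with Remark~\ref{r:rhoremark}. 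For each such $j$, Lemma~\ref{ikbounds} ensures $\widehat D(v) \ge L_{\min} := (c'_{\ref{prop:compressible}})^2 \, 2^{-5} \alpha^{3/2} n^{1/2}$, which matches the lower bound required by Proposition~\ref{prop:smallLCD}. I then partition $[L_{\min}, p^{-1/2}\exp(\alpha^{-1})]$ dyadically into intervals $[L_k, 2L_k)$ with $L_k = 2^k L_{\min}$, for $0 \le k \le K_{\max}$. Using $\alpha^{-1} \le (np)^{1/(7+\nu)} \le n^{1/7}$, one finds $K_{\max} = O(\alpha^{-1} + \log n) = O(n^{1/7})$.

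For each pair $(j, k)$ I apply Proposition~\ref{prop:smallLCD} with $L = L_k$. Since $\rho'_j = 2^{j-1}\rho'_1 \ge \rho'_1$ by Lemma~\ref{ikbounds}, the hypothesized bound $\|(M_n-\lambda)v\|_2 \le c_{\ref{prop:smallLCD}} \eps_1 \rho'_1 \sqrt{pn}$ implies $\|(M_n-\lambda)v\|_2 \le c_{\ref{prop:smallLCD}} \eps_0(L_k) \rho'_j \sqrt{pn}$, provided $\eps_1 \le \eps_0(L_k)$. To verify the latter uniformly in $k$, I note that at the worst case $L = p^{-1/2}\exp(\alpha^{-1})$,
$$\frac{c'_{\ref{prop:smallLCD}} \sqrt{\alpha n}}{L} \ge c'_{\ref{prop:smallLCD}} \sqrt{\alpha pn}\,\exp(-\alpha^{-1}) \ge \exp(-2 n^{1/7})$$
for all sufficiently large $n$, using $\alpha^{-1} \le n^{1/7}$ and $\alpha pn \to \infty$ under the hypothesis $p \ge C\log^{7+\nu}(n)/n$. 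A direct calculation using $r = (\tilde c_{\ref{prop:smallLCD}}/2)\alpha^{3/2}(np)^{1/2}$ shows that the second candidate expression in $\eps_0$ is only polynomially small in $n$, hence even larger. Therefore $\eps_1 = \exp(-c_{\ref{p:smallLCDallLevels}} n^{1/7})$ satisfies $\eps_1 \le \eps_0(L_k)$ for every admissible $k$, provided $c_{\ref{p:smallLCDallLevels}}$ is chosen appropriately.

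Finally, a union bound over the $O(n^{1/7} \log n)$ pairs $(j, k)$, each contributing $\exp(-c'_{\ref{prop:smallLCD}} n)$ from Proposition~\ref{prop:smallLCD}, together with the $\exp(-c pn)$ cost from excluding compressible and dominated vectors, yields the desired $\exp(-c_{\ref{p:smallLCDallLevels}} n)$ bound after shrinking the constant. The main obstacle is the verification that a single $\eps_1$ works uniformly across the full parameter range, in particular at the extremal value $\alpha = (np)^{-1/(7+\nu)}$, where $\exp(\alpha^{-1})$ can be as large as $\exp(n^{1/7})$; it is precisely this worst case that forces the exponent $1/7$ in the definition of $\eps_1$, and that ties the entire argument to the lower bound on $\alpha$ in the hypothesis.
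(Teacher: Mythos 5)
Your proposal is correct and follows essentially the same route as the paper: decompose the relevant vectors into dyadic level sets in the tail $\ell_2$-mass parameter $j$ and in the LCD parameter $L$, apply Proposition~\ref{prop:smallLCD} to each piece, verify $\eps_1 \le \eps_0(L)$ uniformly, and take a union bound. Two small remarks. First, your count of $O(n^{1/7})$ LCD levels is in fact more accurate than the paper's, which asserts $O(\log n)$; that claim holds only near the lower threshold for $p$, but either count is harmless since the cost is dwarfed by $\exp(-c'_{\ref{prop:smallLCD}}n)$. Second, and more substantively, your explicit exclusion of dominated vectors via Proposition~\ref{prop:compressible} incurs an $\exp(-cpn)$ error, which is strictly larger than the stated $\exp(-c_{\ref{p:smallLCDallLevels}}n)$ whenever $p = o(1)$, so ``shrinking the constant'' does not literally recover the claimed bound. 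The paper avoids this because $\widehat D$ is defined via the partition of Section~\ref{s:smallLCDdecomp}, which is only constructed for $v \notin \Dom(\alpha n, c'_{\ref{prop:compressible}})$ (this is where Lemma~\ref{l:largeset} and hence Lemma~\ref{ikbounds}, giving $\widehat D(v) \ge D_0$, are used); so the hypothesis $\widehat D(v) \le p^{-1/2}\exp(\alpha^{-1})$ implicitly restricts to non-dominated $v$ and the extra step is unnecessary. Your reading is more conservative and the resulting $\exp(-cpn)$ bound is still entirely sufficient for the downstream use in Proposition~\ref{p:eigvectors}, but if you want the stated $\exp(-cn)$ you should drop the domination-exclusion step and instead observe that the hypothesis already confines $v$ to $\bigcup_j \IncompTwo$.
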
 

\begin{proof}
We set $D_0 = c'_{\ref{prop:compressible}} 2^{-5} \alpha^{3/2} n^{1/2}$ and recall that $\widehat{D}(v) \ge D_0$ by $\eqref{e:Llower}$. We can decompose the relevant vectors as
$$
\bigcup_{j' = 0 }^{\log_2 \left(p^{-1/2} \exp(\alpha^{-1})\right)}
\bigcup_{j=0 }^{ \log_2 \rho^{-1}}   \left(\Incomp(m, 2^{j} \rho, 2^{j+1}\rho) \cap S_{2^{j'} D_0} \right),
$$
\noindent where we used $D_0 \ge 1$. Recall $\log(1/\rho) = O(\log n/ \log \log n)$ by Remark~\ref{r:rhoremark}.
Similarly, the number of $j'$ indices in the union is $O(\log n)$ because each of $\log_2 p^{-1/2}$ and  $\log_2 \exp(\alpha^{-1})$ are $O(\log n)$.
 Therefore, taking a union bound, applying Proposition \ref{prop:smallLCD}, and observing $\rho'_j \geq \rho'_1$ and $\eps_0(L) \ge \eps_1$ for the $\eps_0(L)$ defined in Proposition~\ref{prop:smallLCD} yields the result. \end{proof} 

\subsection{Eigenvector Bound}\label{s:evb}
We now come to a key proposition used in the proof of the main theorem. 
\begin{proposition} \label{p:eigvectors}
For $M_n$ as in Definition~\ref{def:matrixmodel}, there exists a constant $c_{\ref{p:eigvectors}} > 0$ such that for 
$$
(np)^{-1/(7 + \nu)} \leq \alpha \leq \frac{c'_{\ref{prop:smallLCD}}}{\log n},
$$
the probability that $M_n$ has an eigenvector  v  such that 
\begin{equation*}v \notin \Comp(\alpha n, \rho) \cup \Dom(\alpha n, c_{\ref{prop:compressible}} )\text{ and } \widehat D(v ) \leq p^{-1/2} \exp(\alpha^{-1})\end{equation*} is at most $\exp(-c_{\ref{p:eigvectors}} n)$, for $n \ge (c_{\ref{p:eigvectors}})^{-1}$.
\end{proposition}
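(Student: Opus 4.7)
The plan is to reduce Proposition~\ref{p:eigvectors} to the fixed-eigenvalue estimate of Proposition~\ref{p:smallLCDallLevels} via a covering argument over the spectrum, in direct analogy with the proof of Proposition~\ref{prop:eigvecnotcomp}. First I would condition on the high-probability event, guaranteed by Lemma~\ref{l:opnorm}, that every eigenvalue of $M_n$ lies in $[-K\sqrt{pn}, K\sqrt{pn}]$; the failure probability $\exp(-c_{\ref{l:opnorm}} pn)$ is negligible and can be absorbed into the final bound by shrinking $c_{\ref{p:eigvectors}}$. On this good event, every eigenvector $v$ is associated to some $\lambda$ in this bounded interval.

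Next I would fix a net $\mathcal{N}$ of $[-K\sqrt{pn}, K\sqrt{pn}]$ with spacing $\tfrac{1}{2}c_{\ref{prop:smallLCD}}\,\varepsilon_1 \rho'_1 \sqrt{pn}$, so that
$$|\mathcal{N}| \le \frac{4K}{c_{\ref{prop:smallLCD}}\,\varepsilon_1 \rho'_1}.$$
If $v$ is a unit eigenvector with eigenvalue $\lambda \in [-K\sqrt{pn}, K\sqrt{pn}]$ that is neither compressible nor dominated and satisfies $\widehat{D}(v) \le p^{-1/2}\exp(\alpha^{-1})$, then there exists $\lambda_0 \in \mathcal{N}$ with
$$\|(M_n - \lambda_0) v\|_2 = |\lambda - \lambda_0| \le \tfrac{1}{2} c_{\ref{prop:smallLCD}}\,\varepsilon_1 \rho'_1 \sqrt{pn}.$$
In particular $v \in \Incomp(\alpha n, \rho)$, so the event that such a $v$ exists is contained in the bad event of Proposition~\ref{p:smallLCDallLevels} at the parameter $\lambda_0$. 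A union bound over $\mathcal{N}$ then yields
$$\P(\text{bad eigenvector exists}) \le |\mathcal{N}|\exp(-c_{\ref{p:smallLCDallLevels}} n).$$

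The only nontrivial check is that $\log|\mathcal{N}|$ is dominated by $c_{\ref{p:smallLCDallLevels}} n$. Since $\varepsilon_1 = \exp(-c_{\ref{p:smallLCDallLevels}} n^{1/7})$, $\rho'_1$ is a constant multiple of $\rho\alpha$ with $\log(1/\rho) = O(\log n/\log\log n)$ by Remark~\ref{r:rhoremark}, and $\log(1/\alpha) = O(\log n)$ by the stipulated lower bound on $\alpha$, we obtain
$$\log|\mathcal{N}| = O(n^{1/7}) + O(\log n),$$
which is $o(n)$. Choosing $c_{\ref{p:eigvectors}}$ strictly smaller than $c_{\ref{p:smallLCDallLevels}}$ then gives the claimed bound $\exp(-c_{\ref{p:eigvectors}} n)$, after also absorbing the operator-norm failure probability. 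The main substantive work was already performed in Proposition~\ref{p:smallLCDallLevels}; the present step is a routine net reduction, and the principal thing to track is that the net cardinality $|\mathcal{N}|$ is sub-exponential in $n$, which is precisely why the proof of Proposition~\ref{p:smallLCDallLevels} was calibrated so that $\varepsilon_1$ decays only stretched-exponentially (as $\exp(-cn^{1/7})$) rather than exponentially.
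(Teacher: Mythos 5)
Your proposal is correct and follows essentially the same net-over-the-spectrum reduction that the paper uses, invoking Lemma~\ref{l:opnorm} to confine the eigenvalues and Proposition~\ref{p:smallLCDallLevels} pointwise on the net. The only difference is cosmetic: you spell out the cardinality bound $\log|\mathcal{N}| = O(n^{1/7}) + O(\log n) = o(n)$ (correctly noting the role of $\varepsilon_1$'s stretched-exponential decay), whereas the paper leaves this arithmetic implicit.
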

\begin{proof}
Consider a $c_{\ref{prop:smallLCD}}\eps_1 \rho'_1 \sqrt{pn}$-net of $[-K \sqrt{pn}, K\sqrt{pn}]$, where $\eps_1$ was defined in Proposition~\ref{p:smallLCDallLevels}. For an eigenvalue $\lambda \in [-K \sqrt{pn}, K \sqrt{pn}]$, there exists a point of the net $\lambda_0$ such that for corresponding eigenvector $v$ we have
$$
\|(M_n - \lambda_0) v\|_2 = |\lambda - \lambda_0| \leq c_{\ref{prop:smallLCD}} \eps_1 \rho'_1 \sqrt{pn}.
$$
However, by a union bound and Proposition~\ref{p:smallLCDallLevels}, the probability of this event is bounded by $\exp(-c_{\ref{p:eigvectors}} n)$ for some $c_{\ref{p:eigvectors}}>0$.  By Lemma~\ref{l:opnorm}, decreasing the value of $c_{\ref{p:eigvectors}}$ can account for the event that there exists an eigenvalue of $M_n$ outside the interval $[-K \sqrt{pn}, K\sqrt{pn}]$. This concludes the proof.
\end{proof}

\section{Proofs of Main Results}
\subsection{Proof of Theorem \ref{thm:main}} \label{sec:mainproof}
In preparation for the main proof, we record the following lemma from \cite{luh2018sparse}.
\begin{lemma}[{\cite[Lemma 6.1]{luh2018sparse}}]\label{l:largecoordinates}
For any $v \in \Incomp(m,\rho)$, 
$$
\left| \left\{ i : \frac{\rho^2}{\sqrt{2n}} \le |v_i | \le \frac{1}{\sqrt{m}}  \right\}  \right| \ge \frac{ m \rho^2}{2}.
$$
\end{lemma}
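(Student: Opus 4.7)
The plan is a direct pigeonhole/contradiction argument exploiting the unit norm constraint and the definition of incompressibility. I would first partition the coordinate indices into three classes based on magnitude:
\begin{equation*}
A = \{i : |v_i| > 1/\sqrt m\}, \quad C = \{i : \rho^2/\sqrt{2n} \le |v_i| \le 1/\sqrt m\}, \quad B = \{i : |v_i| < \rho^2/\sqrt{2n}\},
\end{equation*}
so that $A, B, C$ partition $[n]$. The goal becomes showing $|C| \ge m\rho^2/2$.

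First I would record the elementary observation $|A| \le m-1$: otherwise $\|v\|_2^2 \ge \sum_{i \in A} v_i^2 > m \cdot (1/\sqrt m)^2 = 1$, contradicting $v \in \mathbb{S}^{n-1}$. Next, suppose for contradiction that $|C| < m\rho^2/2$. I would build a sparse approximation $y$ of $v$ by keeping the $m$ largest (in absolute value) coordinates of $v$ and zeroing out the rest, so $y \in \Sparse(m)$; since $A \subseteq T$ where $T$ is the support of $y$, and the remaining indices of $T$ are filled greedily from $C$ first (because $C$-entries dominate $B$-entries in magnitude), one gets a clean description of $v - y$.

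The computation then splits into two cases. In the first case $|C| \ge m - |A|$, so $T \setminus A \subseteq C$, and
\begin{equation*}
\|v - y\|_2^2 = \sum_{i \in B} v_i^2 + \sum_{i \in C \setminus T} v_i^2 < n \cdot \frac{\rho^4}{2n} + \frac{|C|}{m} < \frac{\rho^4}{2} + \frac{\rho^2}{2} \le \rho^2,
\end{equation*}
using the upper bounds defining $B$ and $C$ together with the contradiction hypothesis $|C| < m\rho^2/2$ and $\rho \le 1$. In the second case $|C| < m - |A|$, so $B \cup C \subseteq T^c$ is not possible; instead $T$ absorbs all of $A \cup C$ plus some elements of $B$, leaving $\|v - y\|_2^2 = \sum_{i \in B \setminus T} v_i^2 \le n\cdot \rho^4/(2n) = \rho^4/2 < \rho^2$. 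Either way, $\|v - y\|_2 < \rho$ with $y \in \Sparse(m)$, contradicting $v \in \Incomp(m, \rho)$.

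I do not expect any genuine obstacle here: the argument is essentially a careful two-threshold pigeonhole. The only subtlety is correctly handling the two cases for how the top-$m$ set $T$ interacts with $A \cup C$, and making sure the bounds $\rho^4/2 + \rho^2/2 \le \rho^2$ close cleanly (which uses $\rho \le 1$).
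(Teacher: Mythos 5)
The paper does not reprove this lemma; it simply cites \cite[Lemma 6.1]{luh2018sparse}, so there is no in-paper argument to compare against. Your proof is correct and is the standard Rudelson--Vershynin-style pigeonhole argument one would expect in the cited source: the bound $|A| < m$ from $\|v\|_2 = 1$, the bound on the $\ell_2$ mass of $B$ from the $\rho^2/\sqrt{2n}$ threshold, the two-case split on whether the top-$m$ index set $T$ is contained in $A \cup C$ or spills into $B$, and the closing estimate $\rho^4/2 + \rho^2/2 \le \rho^2$ (using $\rho \le 1$) all check out, yielding $\|v - y\|_2 < \rho$ with $y \in \Sparse(m)$ and hence a contradiction with $v \in \Incomp(m,\rho)$.
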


\begin{proof}[Proof of Theorem \ref{thm:main}]
We repeat the decomposition described in Section \ref{sec:proofstrategy}.  Let
\begin{equation} \label{eq:matrixdecomp}
M_n = 
\begin{pmatrix} 
  M_{n-1} & X \\ 
  X^T & m_{nn} 
\end{pmatrix},
\end{equation}
where $X = (x_1, \dots, x_{n-1}) \in \mathbb{R}^{n-1}$.  Let $v = (x , a)$ (where $x \in \mathbb{R}^{n-1}$ and ${a \in \mathbb{R}}$) be the unit eigenvector associated to $\lambda_i(M_n)$. Because $v$ is an eigenvector with eigenvalue $\lambda_i$,
$$
\begin{pmatrix}
M_{n-1} & X \\ 
X^T & m_{nn}
\end{pmatrix}
\begin{pmatrix}
x \\
a
\end{pmatrix} = 
\lambda_i(M_n) \begin{pmatrix}
x \\
a
\end{pmatrix}.
$$
Considering the top $n-1$ coordinates gives
$$
(M_{n-1} - \lambda_i(M_n)) x + a X = 0.
$$
Let $w$ be the eigenvector of $M_{n-1}$ corresponding to $\lambda_{i}(M_{n-1})$.  After multiplying on the left by $w^T$, we arrive at
\begin{equation}
|a w^T X | = |w^T (M_{n-1} - \lambda_i(M_n)) x| = |\lambda_i(M_{n-1}) - \lambda_i(M_n)||w^T x|.
\end{equation}
Since $|w^T x | \le 1$ by the Cauchy--Schwarz inequality, this implies
\begin{equation}\label{eq:eiggap}
|w^T X |  \le \frac{1}{|a|} |\lambda_i(M_{n-1}) - \lambda_i(M_n)|.
\end{equation}

By the Cauchy interlacing law, we must have $\lambda_i(M_n) \leq \lambda_i (M_{n-1}) \leq \lambda_{i-1} (M_n)$. For any $\hat \delta >0$, let $\mathcal{E}_i = \mathcal{E}_i \left( \hat \delta \right)$ denote the event that 
\begin{equation}\label{e:gggg}
\lambda_{i+1} - \lambda_i  \leq \hat \delta  \sqrt{\frac{p}{n}}.
\end{equation}
On $\mathcal E_i$, \eqref{eq:eiggap} implies
\begin{equation}\label{e:123}
|w^T X |   \le \hat \delta  \sqrt{\frac{p}{n}} \frac{1}{|a|}.
\end{equation}

Now note that the decomposition (\ref{eq:matrixdecomp}) can be done along any coordinate, not just the last.  For any $A>0$, let $n_A$ be the number of coordinates with absolute value at least $A$, and let $N$ be a parameter. Therefore, repeating the argument leading to \eqref{e:123} with the coordinate $a$ chosen uniformly at random, and considering the probability that we choose a coordinate with absolute value at least $A$, and $\mathcal E_i$ obtains, we find
\begin{align}
%\P(\mathcal{E}_i \cap \{ |a| > A \}) =&
%\P(\mathcal{E}_i \cap \{ |a| > A \} \cap \{  n_A \ge N \} ) + \P(\mathcal{E}_i \cap \{ |a| > A \} %\cap \{  n_A < N \})\\
%\le& 
\P(\mathcal{E}_i) =&
\P(\mathcal{E}_i \cap \{  n_A \ge N \} ) + \P(\mathcal{E}_i \cap \{  n_A < N \} ) \\
 \leq& \frac{n}{N} \P \left( |w^T X| \leq \hat \delta \sqrt{\frac{p}{n}} \frac{1}{A} \right) + \P(n_A < N).\label{e:66}
\end{align}

Setting $m = c_{\ref{prop:compressible}} n $ in Proposition \ref{prop:eigvecnotcomp} shows that any eigenvector $v$ will not be in $\Comp(c_{\ref{prop:compressible}}n, \rho)$ with exponentially high probability. When $v \notin \Comp(c_{\ref{prop:compressible}}n, \rho)$, by Lemma~\ref{l:largecoordinates}, there are greater than $c_{\ref{prop:compressible}} n \rho^2/2$ coordinates whose absolute values are larger than $\rho/\sqrt{2n}$.  We set $N =c_{\ref{prop:compressible}} n \rho^2/2$ and $A = \rho/\sqrt{2n}$ in \eqref{e:66} to find
\begin{equation}\label{e:controlme}
\P(\mathcal{E}_i) \leq \frac{2}{c_{\ref{prop:compressible}} \rho^2} \P \left( |w^T X| \leq \hat \delta \rho^{-1} \sqrt{2p}   \right) + \exp(- c_{\ref{prop:eigvecnotcomp}} p n).
\end{equation}

With probability at least $1 - \exp(- c_{\ref{p:eigvectors}} p n)$, $$\widehat D(w) \geq p^{-1/2} \exp(\alpha^{-1})$$ by Proposition \ref{prop:eigvecnotcomp} (applied with $m=\alpha n$) and Proposition~\ref{p:eigvectors}.  
At this point, we would like to apply Proposition \ref{prop:smallballprobability}
to control the probability $\P \left( |w^T X| \leq \hat \delta \rho^{-1} \sqrt{2p}   \right)$ in 
\eqref{e:controlme}.
However, this proposition applies to the LCD $D(w)$, not the regularized LCD $\widehat D(w)$, so a slightly more delicate argument is required.

By the definition of regularized LCD, there exists some subset $J$ of coordinate indices such that
$$ D\left( \frac{w_J }{\| w_J \|_2}  \right)\ge p^{-1/2} \exp(\alpha^{-1}).$$ To adjust for the regularized LCD, we observe that conditioning on a subset of $X$ can only increase the L\'evy function $\mathcal L ( w^T X, \eps)$ for any $\eps >0$.  We condition on all the random variables in $X$ whose indices do not lie in the subset $J$.  Also, to apply Proposition \ref{prop:smallballprobability}, we need to normalize this subset to be on the unit sphere.   
Therefore, by Proposition \ref{prop:smallballprobability},
\begin{equation}\label{e:thrf}
\mathcal{L}(w^T X, \hat \delta \rho^{-1} \sqrt{2p}) \leq \mathcal{L}\left(\frac{w_J^T}{\|w_J\|_2} X_J,  \frac{\hat \delta \rho^{-1} \sqrt{2p}}{\|w_J\|_2} \right) \leq \frac{ 2 \sqrt{2} C_{\ref{prop:smallballprobability}} \hat \delta  \rho^{-1}}{\|w_J\|_2},
\end{equation}
for all $\hat \delta \geq \rho e^{- \alpha^{-1} }/\sqrt{2}$. 
By Lemma \ref{ikbounds}, $\|w_J\|_2 \geq c'_{\ref{prop:compressible}} 2^{-3} \rho \alpha$. 
Therefore, putting \eqref{e:thrf} into \eqref{e:controlme}, we find

\begin{equation}\label{e:deltadef}
\P(\mathcal{E}_i) \leq \frac{32 \sqrt{2}}{c_{\ref{prop:compressible}} c'_{\ref{prop:compressible}}\rho^4   \alpha}  C_{\ref{prop:smallballprobability}} \hat \delta   + \exp(- c_{\ref{prop:eigvecnotcomp}} p n).
\end{equation}
 We set $\delta  =  \hat \delta \rho^{-4}$. Then the above holds for $\delta \ge \rho^{-3} e^{- \alpha^{-1} }/\sqrt{2}$.  Recall that $\rho^{-3} = \exp(O(\log n/ \log \log n))$.  Thus, we obtain the theorem after lowering $c'_{\ref{thm:main}}$, which constrains the range of $\alpha$.
\end{proof}

\subsection{Proof of Theorem \ref{thm:maingraph}} \label{sec:randomgraph}
Let $G(n,p)$ denote the Erd\H{o}s--R\'{e}nyi random graph on $n$ vertices with edge probability $p$, and let $A_n$ denote the adjacency matrix of $G(n,p)$. In other words, $A_n$ is a symmetric matrix of Bernoulli variables with parameter $p$, with all $0$ entries on the diagonal. We have $\E A_n = p (J_n - I_n)$ where $J_n$ is the matrix of all ones, so our main theorem does not apply. However, only small modifications are necessary to handle this case, which we detail in this section, following closely the analogous argument in \cite[Section 8]{luh2018sparse}. 

First, we observe that Proposition~\ref{prop:compressible} can be adapted so that the proposition holds for $A_n$ in place of $M_n$. This was proved in \cite[Appendix B]{luh2018sparse}. It follows that Proposition~\ref{prop:eigvecnotcomp} also holds for $A_n$ (by repeating the proof of Proposition~\ref{prop:eigvecnotcomp} using the analogue of Proposition~\ref{prop:compressible} for $A_n$).

Next, we claim that Proposition~\ref{p:smallLCDallLevels} can be adapted to hold for the matrix {$A_n - p(J_n -I_n)$} in place of $M_n$, \emph{with the additional restriction that we must suppose $p \le 1/2$}. The restriction is due to the fact that we will write the off-diagonal entries of this matrix as $a_{ij} =\delta_{ij} \xi_{ij}$, where $\delta$ is Bernoulli with parameter $2p$ and $\xi_{ij}$ is Bernoulli with parameter $1/2$ (as in the definition of $M_n$).
%For Proposition~\ref{prop:compressible}, the necessary analogue was proved in Appendix B of \cite{luh2018sparse}. Further, 
Our arguments for Proposition~\ref{p:smallLCDallLevels} revolved around L\'{e}vy concentration and nets. The use of L\'evy concentration in Proposition~\ref{prop:smallballprob} does not need to be modified for the random graph case, since it is invariant under changes in the mean of the matrix.\footnote{However, it does require the aforementioned decomposition $a_{ij} =\delta_{ij} \xi_{ij}$, giving rise to the $p \le 1/2$ restriction.} For the nets, we required the operator norm bound Lemma~\eqref{l:opnorm}; we claim the analogue of this statement for {$A_n - p(J_n -I_n)$} also holds. A straightforward modification of the proof of \cite[Theorem 1.7]{basak2017sparse} shows
\begin{equation}
\P( \| A_n - p ( J_n - I_n) \|_2 \ge K' \sqrt{pn} ) \le \exp(- c' pn)
\end{equation}
for some $K', c'>0$.
We obtain that Proposition~\ref{p:smallLCDallLevels} holds for $A_n - p(J_n-I_n)$, if $p \le 1/2$. 

Additionally, we need a slight generalization of Proposition~\ref{p:smallLCDallLevels},
 which lower bounds not just $\| (A_n  - p( J_n - I_n)  - \lambda)v  \|_2$, but 
 \begin{equation}\label{e:generalize1}
 \| (A_n  - p( J_n - I_n)  - \lambda)v - x \|_2
 \end{equation}
 for any fixed vector $x$. This generalization holds because the high probability lower bounds used to prove Proposition~\ref{p:smallLCDallLevels} come from Proposition~\ref{prop:smallballprob}, and the latter proposition concerns L\'evy concentration, which is by definition translation invariant.
 
%Here we used the fact that L\'evy concentration is invariant under translation by a fixed vector $x$ to get the concentration statement for each point of the net individually. (More precisely,  the conclusion of Proposition~\ref{prop:smallballprob} involves L\'evy concentration, so it may be used to prove an analogue of  )

We now turn to the proof of Theorem \ref{thm:maingraph}.
\begin{proof}[Proof of Theorem  \ref{thm:maingraph}] Above, we established that the analogue of Proposition~\ref{p:smallLCDallLevels} holds for $A_n - p(J_n-I_n)$, if $p \le 1/2$. This restriction motivates the following division into cases.

{\bf Case I: $p \le 1/2$.} Our preliminary goal to is establish that Proposition~\ref{p:eigvectors} holds for $A_n$. We have
\begin{equation}
\{ J_n x \colon x \in \S^{n-1} \} = \{ \theta \cdot \one \colon \theta \in [-n , n]\}
\end{equation}
where $\one$ is the vector $(1,\dots, 1)$ of all ones. Set $\mathcal X_n = \{ \kappa \cdot \one \colon \kappa \in [ -pn , pn] \}$.  Let $\mathcal B$ be a $c_{\ref{p:smallLCDallLevels}} \varepsilon_0 \rho' \sqrt{pn} $-net of $\mathcal X_n$ such that
\begin{equation}\label{e:BBB}
| \mathcal B | \le \frac{4pn}{c_{\ref{p:smallLCDallLevels}} \varepsilon_0 \rho' \sqrt{pn}}
\le C \exp(c n^{1/7}).
\end{equation}
For $x, x' \in \mathcal  X_n$, the reverse triangle inequality yields
\begin{equation}\label{e:rti}
\left|   \| (A_n  - p( J_n - I_n)  - \lambda)v - x \|_2 - \| (A_n  - p( J_n - I_n)  - \lambda)v - x' \|_2   \right| \le \| x - x' \|_2,
\end{equation}
so any $(A_n  - p( J_n - I_n)  - \lambda)v - y$ with $y \in \mathcal X_n$ can be well approximated by $(A_n  - p( J_n - I_n)  - \lambda)v - x $ for some $x \in \mathcal B$.

Define 
$$S_D = \left\{ v  \in  \Incomp(\alpha n , \rho): \widehat D(v) \le p^{-1/2} \exp(\alpha^{-1}) \right\}.$$
By \eqref{e:rti}, a union bound over the net $\mathcal B$, and the  analogue of Proposition~\ref{p:smallLCDallLevels} for \eqref{e:generalize1} stated above, we obtain
\begin{equation}\label{e:daunionb}
\P( \inf_{x\in \mathcal X_n} \inf_{v\in S_D} \| (A_n  - p( J_n - I_n)  - \lambda)v - x  \|_2 \le c_{\ref{p:smallLCDallLevels}} \varepsilon_0 \rho'_1 \sqrt{pn} ) \le \exp( - cn )
\end{equation}
for any single $\lambda \in [-K' \sqrt{pn}, K' \sqrt{pn}]$.
 After observing that 
\begin{equation}
\inf_{x\in \mathcal X_n} \inf_{v\in S_D} \| (A_n  - p( J_n - I_n)  - \lambda)v - x  \|_2 \le 
\inf_{v\in S_D} \| ( A_n - (\lambda - p) ) v \|_2,
\end{equation}
we find 
\begin{equation}\label{ee:notcomp}
\P(\inf_{v\in S_D} \| ( A_n - (\lambda - p) ) v \|_2 \le c_{\ref{p:smallLCDallLevels}} \varepsilon_0 \rho'_1 \sqrt{pn} ) \le \exp( - cn ).
\end{equation}

Using \eqref{ee:notcomp} in place of  Proposition~\ref{p:smallLCDallLevels} in the proof of Proposition~\ref{p:eigvectors}, we find that Proposition~\ref{p:eigvectors} holds for $A_n$ in place of $M_n$.  

We can now repeat the proof of Theorem~\ref{thm:main} to prove theorem in this case, with the appropriate analogues for $A_n$ substituting for Proposition~\ref{p:eigvectors} and Proposition~\ref{prop:eigvecnotcomp}. (The latter was noted at the beginning of Section~\ref{sec:randomgraph}.)

{\bf Case II: $p  > 1/2$.} Observe that the adjacency matrix $A_n(p)$ of $G(n,p)$ is equal in distribution to $J_n - I_n - A_n(1-p)$.  Hence controlling $$\| (A_n(p) - p(J_n -I_n)  - \lambda ) v\|_2$$ is equivalent to controlling $$\| (A_n(1-p) - (1-p) (J_n - I_n)  + \lambda) v\|_2.$$ This reduces the problem to Case I and completes the proof.
\end{proof}
\begin{remark}\label{r:finite}
The size of the one-dimensional net $\mathcal B$ in \eqref{e:BBB} is compensated by the $\exp(-cn)$ error probability used for the union bound in \eqref{e:daunionb}. For general finite-rank perturbations by a finite linear combination of matrices of the form $n  \cdot v v^T$ for $ v \in \S^n$, one simply adds more one-dimensional nets and completes the argument in the same way. However, for perturbations whose rank grows even moderately quickly, the combined size of the necessary supplemental nets becomes too large.
\end{remark}

\subsection{Proof of Theorem \ref{thm:nodaldomains}} \label{sec:application}

The following is essentially Lemma 9.1 of \cite{nguyen2017gaps}. We provide the proof for completeness.

\begin{lemma}\label{l:91}
For any $A>0$ there exists $B=B(A)>0$ such that the following holds with probably at least $1 - O(n^{-A})$. If there exist $\lambda \in \R$ and $v\in \S^{n-1}$ such that $\| (A_n   - \lambda) v \| \le n^{-B}$, then $A_n$ has an eigenvector $u_{i_0} \in \S^{n-1}$ and corresponding eigenvalue $\lambda_{i_0}$ such that $$| \lambda_{i_0} - \lambda|  < n^{-B/4}\quad\text{and}\quad\| v - u_{i_0} \| < n^{ - B/4}.$$ 
\end{lemma}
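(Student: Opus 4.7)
The strategy is to expand $v$ in the orthonormal eigenbasis of $A_n$ and use the minimum-gap bound from Corollary~\ref{cor:largegap} (for the adjacency matrix $A_n$, via Theorem~\ref{thm:maingraph}) to isolate a single dominant coefficient.

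Let $u_1, \dots, u_n$ be an orthonormal eigenbasis of $A_n$ with corresponding eigenvalues $\lambda_1 \le \dots \le \lambda_n$, and write $v = \sum_{i} c_i u_i$ with $\sum_i c_i^2 = 1$. Then
\[
\|(A_n - \lambda)v\|^2 = \sum_{i=1}^n c_i^2 (\lambda_i - \lambda)^2 \le n^{-2B}.
\]
If every eigenvalue of $A_n$ were at distance more than $n^{-B/4}$ from $\lambda$, the right-hand side would be at least $n^{-B/2}$, a contradiction for $B$ large. So there exists some index $i_0$ with $|\lambda_{i_0} - \lambda| \le n^{-B/4}$; this gives the first conclusion.

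To obtain the second conclusion, we use a gap bound. By Corollary~\ref{cor:largegap} (applied to $A_n$ via Theorem~\ref{thm:maingraph}, possibly iterating to replace the lower bound on $\delta$ with $n^{-B_1}$ for a suitable $B_1 = B_1(A)$), there is a constant $B_1$ such that with probability at least $1 - O(n^{-A})$ the minimum eigenvalue gap of $A_n$ satisfies $\delta_{\min} \ge n^{-B_1}$. On this event, for every $i \ne i_0$,
\[
|\lambda_i - \lambda| \ge |\lambda_i - \lambda_{i_0}| - |\lambda_{i_0} - \lambda| \ge n^{-B_1} - n^{-B/4} \ge \tfrac{1}{2} n^{-B_1},
\]
provided $B$ is chosen large enough that $n^{-B/4} \le \tfrac{1}{2} n^{-B_1}$. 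Substituting back,
\[
\tfrac{1}{4} n^{-2B_1} \sum_{i \ne i_0} c_i^2 \le \sum_{i \ne i_0} c_i^2 (\lambda_i - \lambda)^2 \le n^{-2B},
\]
so $\sum_{i \ne i_0} c_i^2 \le 4 n^{2B_1 - 2B}$. After choosing the sign of $u_{i_0}$ so that $c_{i_0} \ge 0$, we have $c_{i_0}^2 \ge 1 - 4n^{2B_1 - 2B}$, hence
\[
\|v - u_{i_0}\|^2 = 2 - 2 c_{i_0} \le 2 - 2\sqrt{1 - 4n^{2B_1-2B}} \le 8\, n^{2B_1 - 2B}.
\]
Taking $B = B(A)$ large enough that $2B_1 - 2B < -B/2$ (e.g.\ $B > 4B_1/3$) yields $\|v - u_{i_0}\| < n^{-B/4}$.

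The only substantive input is the quantitative minimum-gap estimate $\delta_{\min} \ge n^{-B_1}$ with failure probability $O(n^{-A})$; the rest is the routine eigenbasis expansion argument sketched above. The main obstacle is purely bookkeeping: one must select the constant $B$ large enough in terms of both $B_1$ (from the gap estimate) and $A$ (to make the gap event hold with the required probability), and absorb an event of probability $O(n^{-A})$ coming from the gap bound into the statement.
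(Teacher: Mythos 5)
Your proof is correct and takes essentially the same route as the paper: both expand $v$ in the orthonormal eigenbasis of $A_n$, invoke the minimum-gap estimate from Theorem~\ref{thm:maingraph} (with $\alpha$ chosen of order $1/\log n$ and $\delta$ small enough to beat the union bound over $n$ gaps), and then use the gap to conclude all but one coefficient is tiny. The only cosmetic difference is the order in which you identify $i_0$: you first locate an eigenvalue within $n^{-B/4}$ of $\lambda$ and then show its coefficient dominates, whereas the paper first extracts a coefficient $c_{i_0}\ge n^{-1/2}$ by pigeonhole and then deduces $|\lambda-\lambda_{i_0}|\le n^{-B+1/2}$; both orderings yield the same conclusion.
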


\begin{proof} From our main result, Theorem~\ref{thm:maingraph}, we may suppose that all eigenvalue gaps satisfy $| \lambda_j - \lambda_i| \ge n^{-B/2}$. Let $v = \sum c_i u_i$ express $v$ as a linear combination of unit eigenvectors of $A$. There must exist $i_0$ such that $c_{i_0} \ge n^{-1/2}$. So 
\begin{equation}
\| (A _n  - \lambda) v \|  = \left( \sum_{i=1}^n c_i^2 (\lambda_i - \lambda)^2 \right)^{1/2} 
\end{equation}
implies, assuming $\| (A_n   - \lambda) v \| \le n^{-B}$, that $|\lambda - \lambda_{i_0} | \le n^{-B + 1/2}$. This implies the first conclusion. Then because all gaps satisfy $| \lambda_j - \lambda_i| \ge n^{-B/2}$ we have that  $|\lambda - \lambda_{i} | \ge n^{-B/2}/2$ for all $i\neq i_0$. But then we must have $| c_i | = O( n^{-B/2})$ for $i\neq i_0$, implying the second conclusion. \end{proof}

\begin{proof}[Proof of Theorem \ref{thm:nodaldomains}]  We follow the proof of Theorem 3.3 in \cite{nguyen2017gaps}. After adjusting $C$ by adding $1$, it suffices to prove the claim for a single coordinate and use a union bound. Write $A=A_n$ and let its first column be $(a_{11}, X)$ where $X$ is a vector of $n-1$ coordinates. Let $v = (v_1, v')$ be an eigenvector with eigenvalue $\lambda$ so that
\begin{equation}
 v_1 m_{11} +(v')^T X = \lambda v_1, \quad (A _{n-1} - \lambda) v' = -v _1 X.
\end{equation}
Suppose that $|v_1 | \le n^{-D}$ where $D$ will be chosen later. By taking $D$ large enough, using that the entries of $A$ are bounded, and adding $O(N^{-D})$ mass to the first component of $v'$ to make it unit norm, it suffices to show that 
\begin{equation}
\| (A _{n-1} - \lambda) v' \| \le n^{-D/2}\quad \text{and}\quad|(v')^T X| \le n^{-D/2}
\end{equation}
occur jointly with low probability. By Lemma~\ref{l:91}, if the first condition holds then there exists an eigenvector $u'$ of $A_{n-1}$ with $\| u'  - v'\|_2 \le n^{-D/8}$. Then  $|(v')^T X| \le N^{-D/2}$ implies  $|(u')^T X| \le n^{-D/16}$. We claim this contradicts a statement established in the proof of Theorem~\ref{thm:main}.

In \eqref{e:thrf} and the following lines, we showed 
\begin{equation}
\P \left( |w^T X| \leq  \delta \rho^3 \sqrt{p} \right) \le C \frac{\rho^2}{\alpha} \delta,
\end{equation}
where $\delta$ was defined below \eqref{e:deltadef} (in terms of $\hat \delta$). Now we take $\delta = n^{-D/16}/\rho^3 \sqrt{p}$, $\alpha = (np)^{-1/(7 + \nu)}$ and $p > C \log^{7 + \nu} (n)/n$, which proves the theorem after taking $D$ large enough. \end{proof}

\bibliographystyle{plain}
\bibliography{tail}
\end{document}